\newtheorem{thm}{Theorem}[section]
\newtheorem{lemma}[thm]{Lemma}
\newtheorem{proposition}[thm]{Proposition}
\newtheorem{prop}[thm]{Proposition}
\newtheorem{conjecture}[thm]{Conjecture}
\newtheorem{thm-dfn}[thm]{Theorem-Definition}
\newtheorem*{Assumption}{Assumption}
\theoremstyle{definition}
\newtheorem{remark}[thm]{Remark}
\newtheorem{example}[thm]{Example}
\numberwithin{equation}{section}
\newcommand{\fg}{{\mathfrak g}}
\newcommand{\fn}{{\mathfrak n}}
\newcommand{\fp}{{\mathfrak p}}
\newcommand{\fa}{{\mathfrak a}}
\newcommand{\fc}{{\mathfrak c}}
\newcommand{\fF}{{\mathfrak{F}}}
\newcommand{\Lg}{{\mathfrak g}}
\newcommand{\bC}{{\mathbb C}}
\newcommand{\bG}{{\mathbb G}}
\newcommand{\bZ}{{\mathbb Z}}
\newcommand{\bbP}{{\mathbb P}}
\newcommand{\bP}{{\mathbb P}}
\newcommand{\Hess}{\on{Hess}}
\newcommand{\mY}{\mathcal{Y}}
\newcommand{\mE}{\mathcal{E}}
\newcommand{\mF}{\mathcal{F}}
\newcommand{\mO}{\mathcal{O}}
\newcommand{\mL}{\mathcal{L}}
\newcommand{\mH}{\mathcal{H}}
\newcommand{\mN}{\mathcal{N}}
\newcommand{\calF}{{\mathcal F}}
\newcommand{\calL}{{\mathcal L}}
\newcommand{\calS}{{\mathcal S}}
\newcommand{\cO}{{\mathcal O}}
\newcommand{\cF}{{\mathcal F}}
\newcommand{\cN}{{\mathcal N}}
\newcommand{\cE}{{\mathcal E}}
\newcommand{\cM}{{\mathcal{M}}}
\newcommand{\on}{\operatorname}
\newcommand{\tX}{\widetilde X}
\newcommand{\tQ}{\widetilde Q}
\newcommand{\tE}{\widetilde E}
\newcommand{\tV}{\widetilde V}
\newcommand{\ra}{\rightarrow}
\newcommand{\is}{\simeq}
\newcommand{\Loc}{\on{LocSys}}
\newcommand{\tC}{\widetilde C}
\newcommand{\nc}{\newcommand}
\nc{\al}{{\alpha}} \nc{\be}{{\beta}} \nc{\ga}{{\gamma}}
\nc{\ve}{{\varepsilon}} \nc{\Ga}{{\Gamma}} 
\nc{\La}{{\Lambda}}
\nc{\ad }{{\on{ad }}}
\nc{\aff}{{\on{aff}}} \nc{\Aff}{{\mathbf{Aff}}}
\nc{\der}{{\on{der}}}
\nc{\diag}{{\on{diag}}}
\nc{\Fl}{{\calF\ell}}
\nc{\Hg}{{\on{Higgs}}}
\newcommand{\Hom}{{\on{Hom}}}
\nc{\Id}{{\on{Id}}}
\nc{\Ind}{{\on{Ind}}}
\newcommand{\Lie}{{\on{Lie}}}
\nc{\Op}{{\on{Op}}}
\nc{\res}{{\on{res}}}
\nc{\tr}{{\on{tr}}}
\nc{\GSp}{{\on{GSp}}} \nc{\GU}{{\on{GU}}} \nc{\SL}{{\on{SL}}}
\nc{\SU}{{\on{SU}}} \nc{\SO}{{\on{SO}}}
\newcommand{\ars}{{\mathfrak a^{rs}}}
\newcommand{\grs}{{\mathfrak g_1^{rs}}}
\nc{\nh}{{\Loc_{J^p}(\tau')}}
\nc{\bnh}{{\Loc_{\breve J^p}(\tau')}}
\nc{\bU}{{\overline{U}}} 
\nc{\IC}{{\on{IC}}}
\newcommand{\p}{\perp}
\nc{\ot}{\otimes}
\nc{\oh}{{\operatorname{H}}}
\nc{\gr}{{\operatorname{gr}}}
\nc{\rk}{{\operatorname{rank}}}
\nc{\codim}{{\operatorname{codim}}}
\nc{\img}{{\operatorname{Im}}}
\nc{\Span}{{\operatorname{Span}}}
\nc{\Img}{\operatorname{Im}}
\newcommand{\beqn}{\begin{equation*}}
\newcommand{\eeqn}{\end{equation*}}
\newcommand{\beq}{\begin{equation}}
\newcommand{\eeq}{\end{equation}}
\newcommand{\bern}{\begin{eqnarray*}}
\newcommand{\eern}{\end{eqnarray*}}
\@date \else {\vskip1ex\footnotesize \centering\@date\par\vskip1ex}\fi
\else \@footnotetext{\@setdate}\fi}
\begin{document}
\title[Hessenberg varieties and the Springer correspondence]{Hessenberg varieties, intersections of quadrics, and the Springer correspondence}
        \author{Tsao-Hsien Chen} 
        \thanks{Tsao-Hsien Chen was supported in part by the AMS-Simons travel grant.}
        \address{Department of Mathematics, University of Chicago, Chicago, 60637, USA}
         
\subjclass[2010]{14M10,17B08, 22E60}       
        \email{chenth@math.uchicago.edu}
        \author{Kari Vilonen}
        \thanks{Kari Vilonen was supported in part by the ARC grants DP150103525 and DP180101445, the Academy of Finland, NSF grant DMS-1402928, the Humboldt Foundation, and the Simons Foundation.}

        \address{School of Mathematics and Statistics, University of Melbourne, Australia and Department of Mathematics and Statistics, University of Helsinki, Finland}
         \email{kari.vilonen@unimelb.edu.au}
\author{Ting Xue}
\address{
School of Mathematics and Statistics, University of Melbourne, Australia and Department of Mathematics and Statistics, University of Helsinki, Finland}
\email{ting.xue@unimelb.edu.au}

\thanks{Ting Xue was supported in part by the ARC grants DP150103525, DE160100975 and the Academy of Finland.}

\begin{abstract}
In this paper we introduce a certain class of families of Hessenberg varieties 
arising from Springer theory for symmetric spaces.
We study the geometry of those Hessenberg varieties
and investigate their monodromy representations 
in detail using the geometry of complete intersections of quadrics. 
We obtain decompositions of these monodromy representations into irreducibles
 and 
compute the Fourier transforms of the IC complexes
associated to these irreducible representations. 
The results of the paper refine (part of) the Springer correspondece 
for the split symmetric pair $(SL(N),SO(N))$
in \cite{CVX2}.

\end{abstract}

\maketitle

\setcounter{tocdepth}{1} \tableofcontents

\section{Introduction}
In this paper we study the geometry of Hessenberg varieties of \cite{GKM} 
arising from 
Springer theory for symmetric spaces \cite{CVX,CVX2}.
Let us recall the set-up. Let $G$ be a reductive group and $\theta$ an involution of $G$. We write $K= (G^\theta)^0$ for the connected component of the fixed point set. This gives rise to a symmetric pair $(G,K)$. We also have the corresponding decomposition of the Lie algebra $\fg=\fg_0\oplus\fg_1$ where $\fg_0$ is the fixed point set and $\fg_1$ is the $(-1)$-eigenspace of $\theta$, respectively. We write $\cN_1=\cN\cap\fg_1$ where $\cN$ is the nilpotent cone in $\fg$. 
Let us call an irreducible $K$-equivaraint IC-sheaf\footnote{IC=intersection cohomology} 
supported on the nilpotent cone $\mathcal N_1$
a \emph{nilpotent orbital complex}.
We address the following question which can be regarded as an analogue of the classical springer correspondence:  what are the Fourier transforms of 
nilpotent orbital complexes? 

We work in the context of the split symmetric pair $(G,K)=(SL(N),SO(N))$ where $K=G^\theta$ is given by an involution $\theta:G\to G$ and $N$ is odd. 
Recall that the $K$-orbits in $\cN_1$ are parametrized by partitions of $N$. 
In \cite{CVX2}, we show that the Fourier transform
gives a bijection between nilpotent orbital complexes and certain representations 
of braid groups. We identify these representations of braid groups and constructed them explicitly using representations of Hecke algebra of symmetric groups at $q=-1$. 
This bijection can be viewed as Springer correspondence for the symmetric pair 
$(SL(N),SO(N))$. In the course of the proof we give a construction of nilpotent orbital complexes 
with full support Fourier transforms, following Lusztig we call them \emph{thick}
nilpotent orbital complexes, using Grinberg's
nearby cycle sheaves.

The goal of this paper is to 
use the geometry of Hessenberg varieties to
reach a better understanding of 
Fourier transforms of nilpotent orbital complexes and 
Springer correspondence 
for the symmetric pair 
$(SL(N),SO(N))$. We concentrate on the thick nilpotent orbital complexes because 
in \cite[Corollary 4.8]{CVX2} we show that one can obtain all nilpotent orbital complexes by induction 
from thick nilpotent orbital complexes of smaller groups.

To this end we proposed in~\cite{CVX} a general method of analyzing Fourier transforms of 
nilpotent orbital complexes. We replace the Springer resolution and the Grothendieck simultaneous resolution of the classical Springer correspondence by (several) pairs of families of Hessenberg varieties  $\mathcal{X}$ and  $\check{\mathcal{X}}$ and obtain the following picture:
 \beq
\label{advanced springer diagram}
\begin{CD}
\mathcal{X}  @>>> \check{\mathcal{X}}
\\
@V{\pi}VV  @VV{\check\pi}V
\\
\cN_1  @>>>  \fg_1
\end{CD}
\eeq
The image of $\pi$ is a nilpotent orbit closure $\bar\cO$ but neither $\pi$ nor $\check \pi$ are semi-small in general. In fact, their generic fibers are not just points in general but they form smooth families of varieties.  The key to analyzing the Fourier transforms of 
nilpotent orbital complexes
 in this manner is that the constant sheaf on $\check{\mathcal{X}}$ is the Fourier transform of the constant sheaf on $\mathcal{X}$. Thus, at least as a first approximation, we are reduced to decomposing the push-forwards $\pi_*\bC_ \mathcal{X} $ and $\check\pi_*\bC_{\check{\mathcal{X}}}$ into direct sums of IC-sheaves; this is possible by the decomposition theorem. 
In \cite{CVX} we study the case when the thick nilpotent orbital complexes are supported on nilpotent orbits of order 2, i.e., orbits which correspond to partitions that only involve 2's and 1's,
and we show that the relevant families of Hessenberg varieties are closely related to 
Jacobians of hyperelliptic curves (see \cite[\S 3]{CVX}). 
In this paper we treat the case of orbits of order 3. It turns out that the 
relevant Hessenberg varieties are closely related to complete intersection of quadrics.

We now describe our results in more details.
Let us recall the definition of 
Hessenberg varieties in our setting following \cite{GKM}. Let $x\in\fg_1$, $P$ a parabolic subgroup of $K$, and $\Sigma\subset \fg_1$ a $P$-invariant subspace. The Hessenberg variety associated to the triple $(x,P,\Sigma)$, denoted by 
$\on{Hess}_x(K/P,\fg_1,\Sigma)$, is by definition the 
following variety \[\on{Hess}_x(K/P,\fg_1,\Sigma):=\{g\in K/P\,|\,g^{-1}x\in \Sigma\}.\]
As $x$ varies over $\fg_1$, we get a family of Hessenberg varieties 
$\on{Hess}(K/P,\fg_1,\Sigma)\ra\fg_1$.

The particular pairs of families of Hessenberg varieties we study here have the following properties. 
One of the families in the pair, when restricted to the regular semi-simple locus, is  isomorphic to a family
of complete intersections of quadrics (see Theorem \ref{H=X}); this is the family $\check\pi: \check{\mathcal{X}}\to  \fg_1$ in~\eqref{advanced springer diagram}. The other family, corresponding to $\pi: {\mathcal{X}}\to  \cN_1$ in~\eqref{advanced springer diagram}, is supported on the locus of nilpotent elements of order at most  three and the fibers of this family admit affine pavings (see \S\ref{affine pavings}).

The main results in this paper are in \S\ref{sec-monodromy of Hess} and \S\ref{computation of FT}.  When restricted to the locus of regular semi-simple elements $\fg_1^{rs}$ our particular families of Hessenberg varieties can be interpreted as families of intersections of quadrics in projective spaces. In \S\ref{sec-monodromy of Hess},
we study the monodromy representations arising from the primitive cohomology of these families (and their natural double covers). This is accomplished by establishing a relative version of  results of T. Terasoma \cite{T} in \S\ref{generalization of T}. 
The resulting monodromy representations of $\pi_1^{K}(\fg_1^{rs})$ can be expressed in terms of 
monodromy representations of  certain families of hyperelliptic curves over $\fg_1^{rs}$.
In particular, we see that the cohomology of those Hessenberg varieties can be 
expressed in terms of cohomology of hyperelliptic curves. 
In Theorems~\ref{irred} and~\ref{irred tE} we describe these monodromy representations completely by decomposing them into irreducible pieces which we call $E_{ij}^N$ and $\tE_{ij}^N$, respectively. In \S\ref{computation of FT}, we study
Fourier transforms of the IC complexes arising from the local systems $E_{ij}^N$ and $\tE_{ij}^N$. Recall that these 
were obtained from the primitive cohomology of the particular families of 
Hessenberg varieties and their double covers. 
We show that their Fourier transforms are 
supported on the closed sub-variety $\mathcal N_1^3\subset\mathcal N_1$ consisting of 
nilpotent elements of order less than or equal to three. In particular, 
we obtain various examples of thick nilpotent orbital complexes.
Let $\{(\mO,\mE)\}_{\leq 3}$ be the set of all pairs $(\mO,\mE)$ where $\mO$ is a 
$K$-orbit in $\mathcal N_1^3$ and $\mE$ is an irreducible $K$-equivariant local system on 
$\mO$ (up to isomorphism). In this manner we obtain 
an injective map
\beq\label{injection1}
\{E_{ij}^N\}\cup\{\tE_{ij}^N\}\hookrightarrow\{(\mO,\mE)\}_{\leq 3}\,.
\eeq
This map refines (part of) the Springer correspondence in \cite[Theorem 4.1]{CVX2}.
We would like to emphasis that such a refinement of
Springer correspondence is crucial for applications, for example, computing cohomologies of 
Fano varieties in 
\cite{CVX3}.

As an interesting corollary 
(see Example \ref{hyperelliptic curve of odd}), we show that the 
Fourier transform of the $\IC$ complex for the unique non-trivial irreducible $K$-equivariant local system on
the minimal nilpotent orbit 
 has full support  and the corresponding local system is given by
the monodromy representation of the universal family of hyperelliptic curves
of genus $n$, where $2n+1=N$.

The paper is organized as follows. In \S\ref{Hessenberg varieties}, we 
introduce certain pairs of families of Hessenberg varieties and prove basic facts about them. 
In \S\ref{generalization of T} and \S\ref{sec-monodromy of Hess}, we establish a relative version of the results of Terasoma~\cite{T}. We utilize these results to obtain a 
decomposition of the monodromy representations  into irreducibles.
In \S\ref{computation of FT}, using the results in previous sections,
we show that 
Fourier transforms of the IC complexes 
for the local systems arising from
our families of 
Hessenberg varieties and their double covers are 
supported on the closed sub-variety $\mathcal N_1^3\subset\mathcal N_1$ consisting of 
nilpotent elements of order at most  three. 
In \S\ref{conjs and examples} we give a conjectural (explicit) description of the map in 
\eqref{injection1} (see Conjecture \ref{conj 1} and Conjecture \ref{conj 2}) for $E_{ij}^N$ and we verify 
the conjectures in various examples.

{\bf Acknowledgement.} We thank Cheng-Chiang Tsai and 
Zhiwei Yun for helpful discussions.  
KV and TX also thank Manfred Lehn, Anatoly S. Libgober, and Yoshinori Namikawa for helpful discussions.
We thank the Max Planck Institute for Mathematics in Bonn and the Mathematical Sciences Research Institute in Berkeley for support, hospitality, and a nice research environment. Furthermore KV and TX thank the Research Institute for Mathematical Sciences in Kyoto  for support, hospitality, and a nice research environment. We also thank the referee for carefully reading our paper.

\section{Hessenberg varieties}\label{Hessenberg varieties}
In this section we introduce certain families of Hessenberg varieties 
which  naturally arise when computing the Fourier transforms of IC complexes supported on 
nilpotent orbits of order less than or equal to three, i.e., orbits of the form $\cO_{3^i2^j1^k}$. 
Our main theorem 
(see Theorem \ref{H=X}) says that, generically, these families of Hessenberg varieties
are isomorphic to families of complete intersections of quadrics.

\subsection{Hessenberg varieties}
Let $G$ be a reductive group and  $V$  a representation of $G$.
 Let $P\subset G$ be a parabolic subgroup  and $\Sigma\subset V$ a $P$-invariant subspace. Consider the vector bundle $G\times^P\Sigma$ and write
\beq
G\times^P\Sigma = \on{Hess}(G/P,V,\Sigma) \ra G/P.
\eeq
The natural projection to $V$ gives us a projective morphism
\beq
\on{Hess}(G/P,V,\Sigma) \ra V\,.
\eeq
The fibers of this morphism are called  \emph{Hessenberg varieties}; the fiber over $v$ is given by 
\beq \on{Hess}_v(G/P,V,\Sigma)=\{gP\,|\,g^{-1}v\in \Sigma\}.
\eeq

Consider now the situation when we have a connected reductive group $G$ and an involution $\theta:G\to G$. Let $K=G^\theta$. The involution $\theta$ induces a grading $\Lg=\Lg_0\oplus\Lg_1$ on the Lie algebra $\Lg$ of $G$, where $\Lg_i=\{x\in\Lg\,|\,d\theta(x)=(-1)^ix\}$. The group $K$ acts on $\Lg_1$ by adjoint action. An element $x_0$ in $\Lg_1$ is said to be regular if $\dim Z_K(x)\geq\dim Z_K(x_0)$ for all $x\in\Lg_1$. We write $\Lg_1^{rs}$ for the set of regular semisimple elements in $\Lg_1$.

Let $T_K$ be a maximal torus of $K$ and consider a co-character $\lambda:\bG_m\ra T_K$. We write $P=P(\lambda)$ for the parabolic subgroup of $K$ associated  to $\lambda$, $\fp$ for the Lie algebra of $P$, 
and $\fg_1=\bigoplus\fg_{1,j}$ for the grading induced by $\lambda$.
For any $i\in\bZ$ we define $\fg_{1,\geq i}=\bigoplus_{j\geq i}\fg_{1,j}$.
Let $\Sigma\subset\fg_1$ be a $P$-invariant subspace. The Hessenberg varieties that we are concerned with are of the form $\Hess_v(K/P,\fg_1,\Sigma)$. We have:
\begin{lemma}[\cite{GKM}]
Suppose $\Sigma\supset\fg_{1,\geq i}$ for some $i\leq 0$.
Then the projective morphism $\Hess(K/P,\fg_1,\Sigma)\ra\fg_1$ is smooth over 
$\fg_1^{rs}$, the set of regular semisimple elements in $\Lg_1$.
\end{lemma}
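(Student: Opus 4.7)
The plan is to reduce smoothness over $\fg_1^{rs}$ to a transversality statement on $\fg_1$ and then verify it using the Killing form together with the grading hypothesis $\Sigma\supset\fg_{1,\geq i}$, $i\leq 0$.

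First I would observe that $\Hess(K/P,\fg_1,\Sigma)\cong K\times^P\Sigma$ is smooth, being a vector bundle over $K/P$. Hence smoothness of the projection $\pi$ at a point $[g_0,s_0]$ lying over $v=\Ad(g_0)s_0$ is equivalent to surjectivity of $d\pi$. A routine calculation on the smooth cover $K\times\Sigma\to\fg_1$, $(g,s)\mapsto\Ad(g)s$, reduces this (after conjugating by $g_0^{-1}$) to the transversality condition
\[
[\fk,w]+\Sigma=\fg_1,\qquad w:=s_0\in\Sigma\cap\fg_1^{rs},
\]
where $w$ is automatically regular semisimple since it is $K$-conjugate to $v\in\fg_1^{rs}$.

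The heart of the argument is then to take $\kappa$-perpendiculars in $\fg_1$, where $\kappa$ is the Killing form; it is non-degenerate on $\fg_1$ and pairs $\fg_{1,j}$ with $\fg_{1,-j}$. By $\kappa$-invariance, $[\fk,w]^{\perp}=\fz_{\fg_1}(w)$, and since $w$ is regular semisimple this coincides with a Cartan subspace $\fa\subset\fg_1$; in particular $\fa$ consists of semisimple elements. For the other summand, the weight-pairing formula gives
\[
\Sigma^{\perp}\subset\fg_{1,\geq i}^{\perp}=\fg_{1,\geq -i+1}\subset\fg_{1,\geq 1}
\]
(this is exactly the step where the hypothesis $i\leq 0$ is used), and every element of strictly positive $\lambda$-weight is nilpotent, because $\Ad(\lambda(t))$ contracts it to $0$ as $t\to 0$. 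The transversality thus becomes $\fa\cap\Sigma^{\perp}=0$, which holds because a semisimple element cannot be a nonzero nilpotent.

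The only point requiring any care is the weight-side bookkeeping --- namely, verifying that the assumption $i\leq 0$ is precisely what pushes $\Sigma^{\perp}$ into strictly positive $\lambda$-weights, and hence into the nilpotent cone. Beyond this the argument is essentially formal, resting on the standard dichotomy that a Cartan subspace and the unipotent radical of an opposite parabolic meet only at $0$.
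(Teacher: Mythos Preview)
Your argument is correct and is essentially the same as the paper's proof. Both reduce to showing $[\fg_0,w]+\Sigma=\fg_1$ for $w\in\Sigma\cap\fg_1^{rs}$ and then verify this by showing that any element of $\fg_1$ (or $\fg_1^*$) annihilating both summands is simultaneously semisimple (from centralizing the regular semisimple $w$) and nilpotent (from lying in strictly positive $\lambda$-weights, via the hypothesis $i\leq 0$), hence zero. The only difference is packaging: the paper works on the dual side $\fg_1^*$ and phrases the semisimplicity step through the ``good vector'' notion of \cite{GKM}, whereas you identify $\fg_1^*\cong\fg_1$ via the Killing form and compute the perpendiculars $[\fk,w]^\perp=\fz_{\fg_1}(w)$ and $\Sigma^\perp\subset\fg_{1,\geq 1}$ directly.
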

\begin{proof}
This is proved in \cite[\S 2.5]{GKM}. For the reader's convenience, we recall the argument here.
Observe that the Zariski tangent space to 
$\Hess_v(K/P,\fg_1,\Sigma)\subset K/P$ at a point 
$x=kP\in\Hess_v(K/P,\fg_1,\Sigma)$
can be identified with the kernel of 
\[[v,-]:T(K/P)|_{x}\cong K\times^P(\fg_0/\fp)|_{x}\ra K\times^P(\fg_1/\Sigma)|_{x},\,(k,w)\mapsto (k,[k^{-1}v,w]). \]
So it suffices to show that the map above is surjective on the fibers at each point 
$kP\in\Hess_v(K/P,\fg_1,\Sigma)$ if $v\in\fg_1^{rs}$. For this we show that any 
$v^*\in\fg_1^*$ that annihilates both $[k^{-1}v,\fg_0]$ and $\Sigma $ is zero. 
Since $v^*$ annihilates $\Sigma$ and $\Sigma\supset\fg_{1,\geq i}$ for some $i\leq 0$, there 
exists $\delta>0$ such that $v^*\in\fg_{1,\geq\delta}^*$, that is, $v^*$ is $K$-unstable.  
Since $k^{-1}v\in\fg_1^{rs}$, there is no non-zero $K$-unstable vector $v^*\in\fg_1^*$ that annihilates  the subspace $[k^{-1}v,\fg_0]\subset \fg_1$, we have $v^*=0$. The lemma is proved.
\end{proof}

\subsection{Families of Hessenberg varieties}\label{two Hessenberg}
From now on we concentrate on the following symmetric pair.  
Let $G=SL(N,\bC)$ and $\theta:G\to G$ the involution such that $K:=G^\theta=SO(N,\bC)$. The pair $(G,K)$ is called a split symmetric pair. As in \cite{CVX}, we will assume, starting with~\S\ref{curves}, that $N=2n+1$ is odd, mainly for simplicity.  

Let us write $(G,K)=(SL(V),SO(V,Q))$, where $Q$ is a non-degenerate quadratic form on $V$. Denote by $\langle,\rangle_Q$ the non-degenerate bilinear form associated to $Q$. For a subspace $U\subset V$, we write $U^\p=\{v\in V\,|\,\langle v,U\rangle_Q=0\}$.

Let $\cN$ be the nilpotent cone of $\Lg$ and let $\cN_1=\Lg_1\cap\cN$.  It is known that the number of $K$-orbits in $\cN_1$ is finite (see \cite{KR}). Moreover, the $K$-orbits in $\cN_1$ are parametrized as follows (see \cite{S}). For $N$ odd (resp. even), each partition of $N$ correspond to one $K$-orbit in $\cN_1$ (resp. except that each partition with only even parts corresponds to two $K$-orbits).  In this paper we do not distinguish the two orbits corresponding to the same partition when $N$ is even; thus we write $\cO_\lambda$ for an orbit corresponding to $\lambda$.

Let $\{e_i,\,i=1,\ldots,N\}$ be a basis  of $V$ such that $\langle e_i,e_j\rangle_Q=\delta_{i+j,N+1}$. For any $l\leq \frac{N}{2}$, 
let $P_{l}$ be the parabolic subgroup of $K$ that stabilizes the 
partial flag 
\[0\subset V_{l-1}^0\subset V_{l}^0\subset V_l^{0,\p}\subset V_{l-1}^{0,\p}\subset V=\bC^{N},\]
where $V_i^0=\on{span}\{ e_1,...,e_i\}$. Consider the following two subspaces of 
$\fg_1$,
$$E_{l}=\{x\in\fg_1\,|\,xV_{l}^0=0,\, xV_{l}^{0\p}\subset
V_{l-1}^0\}\text{ and }O_l=\{x\in\fg_1\,|\,xV_{l}^0=0,\, xV_{l-1}^{0\p}\subset
V_{l-1}^0\}.$$ Note that both $E_l$ and $O_l$ are $P_l$-invariant. We form the 
corresponding families of Hessenberg varieties 
\beq\label{Hess E}
\tau_l^N:\Hess_l^E:=\Hess(K/P_l,\fg_1,E_l)\ra\fg_1
\eeq
\beq\label{Hess N}
\sigma_l^N:\Hess_l^O:=\Hess(K/P_l,\fg_1,O_l)\ra\fg_1\,.
\eeq
A direct calculation shows that 
\begin{subequations}
\beq\label{support tau}
\on{Im}\,\tau_l^{N}=\bar{\mathcal O}_{3^{l-1}2^11^{N+1-3l}}\text{ if }3l\leq N+1,\ \ \\
\on{Im}\,\tau_l^{N}=\bar{\mO}_{3^{N-2l}2^{3l-N}}\text{ if }3l>N+1
\eeq
and 
\beq\label{support sigma}
\on{Im}\,\sigma_l^{N}=\bar{\mathcal O}_{3^{l-1}1^{N+3-3l}}\text{ if }3l\leq N+1,\ 
\on{Im}\,\sigma_l^{N}=\bar{\mO}_{3^{N-2l}2^{3l-N}}\text{ if }3l>N+1.
\eeq
\end{subequations}
\begin{remark}
When $3l\leq N+1$, $\tau_l^N$ coincides with 
Reeder's resolution of $\bar{\mathcal O}_{3^{l-1}2^11^{N+1-3l}}$ \cite{R}.
\end{remark}

Let $E_l^\p$ and $O_l^\p$ be the orthogonal complements of $E_l$ and $O_l$ 
in $\fg_1$ with respect to the non-degenerate trace form, respectively. Let us now consider the following families of Hessenberg varieties 
\beq
\check\tau_l^N:{\Hess_l^{E,\p}}:=\Hess(K/P_l,\fg_1,E_l^\p)\ra\fg_1
\eeq
\beq
\check\sigma_l^N:{\Hess_l^{O,\p}}:=\Hess(K/P_l,\fg_1,O_l^\p)\ra\fg_1\,.
\eeq
Concretely, we have 
\[E_l^\p=\{x\in\fg_1\,|\,xV_{l-1}^0\subset V_l^0,\, xV_l^0\subset V_{l}^{0\p}\},\ \ 
O_l^\p=\{x\in\fg_1\,|\,xV_{l-1}^0\subset V_l^0\};\]
and then
\[
{\Hess_l^{O,\p}}\is\{(x,0\subset V_{l-1}\subset V_l\subset V_l^\p\subset V_{l-1}^\p\subset \bC^{N})\,|\,x\in\fg_1,\,
xV_{l-1}\subset V_{l}\},\]
\[
{\Hess_l^{E,\p}}\is\{(x,0\subset V_{l-1}\subset V_l\subset V_l^\p\subset V_{l-1}^\p\subset \bC^{N})\,|\,x\in\fg_1,
\,xV_{l-1}\subset V_{l},\, xV_{l}\subset V_l^\p\}.
\]
Finally, note that, as the notation indicates, the bundle $\Hess_l^{E,\p}\to K/P_l$ is the orthogonal complement of the bundle $\Hess_l^{E}\to K/P_n$ in the trivial bundle $\fg_1 \times K/P_l$ and similarly for ${\Hess_l^{O,\p}}$ and ${\Hess_l^{O}}$.
Hence, by functoriality of the Fourier transform, we have:
\beq
\label{Fourier}
\fF((\sigma^N_{l})_*\bC[-])\cong(\check{\sigma}^N_{l})_*\bC[-]\text{ and }\fF((\tau^N_{l})_*\bC[-])\cong(\check{\tau}^N_{l})_*\bC[-].
\eeq

\subsection{Affine pavings}\label{affine pavings}
 In this subsection we show that 
 \beq
 \begin{gathered}
\text{ the fibers of $\tau_m^N:\on{Hess}_m^{E}\to\cN_1$ and $\sigma_m^N:\on{Hess}_m^{O}\to\cN_1$  have}\\\text{ a paving by affine spaces.}
\end{gathered}
 \eeq

\begin{lemma}\label{lemma-fiberiso}
Let $x\in\cO_{3^i2^j1^{N-3i-2j}}\subset\on{Im}\tau_m^N$ (resp. $\on{Im}\sigma_m^N$) and $x_0\in\cO_{2^j1^{N-3i-2j}}$.
We have 
\beqn
(\tau_m^{N})^{-1}(x)\cong(\tau_{m-i}^{N-3i})^{-1}(x_0)\ (\text{resp. }(\sigma_m^{N})^{-1}(x)\cong(\sigma_{m-i}^{N-3i})^{-1}(x_0)).
\eeqn
\end{lemma}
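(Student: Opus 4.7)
The plan is to split off the size-$3$ Jordan blocks of $x$ and reduce to the fiber of the same type of Hessenberg variety for the smaller orthogonal group $SO(N-3i)$. First I will use the structure of nilpotent $K$-orbits on $\fg_1$ for $K = SO(V)$ to produce an orthogonal $x$-invariant decomposition $V = W_3 \oplus W'$ in which $W_3$ is a non-degenerate subspace carrying the size-$3$ Jordan blocks (so $x|_{W_3}$ has type $3^i$) and $W' := W_3^\perp$ is non-degenerate of dimension $N-3i$ with $x_0 := x|_{W'}$ of type $2^j 1^{N-3i-2j}$; here the $x$-invariance of $W'$ uses that $x$ is symmetric with respect to $Q$. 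This decomposition is essentially the only nontrivial ingredient in the argument and is the main obstacle: it rests on the standard fact that in a suitable Jordan basis adapted to $Q$, the span of the like-sized blocks is a non-degenerate subspace of $V$.

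\textbf{Splitting of the flag.} Set $U := \img(x^2) \subset W_3$, which is $i$-dimensional and isotropic (since $U \subset \ker(x^2) = U^\perp$ by symmetry of $x$). A short argument shows $U \subset V_{m-1}$ for every $(V_{m-1}, V_m) \in (\tau_m^N)^{-1}(x)$: since $V_m \subset \ker(x)$, by symmetry $V_m^\perp \supset \img(x)$, hence $V_{m-1} \supset xV_m^\perp \supset x(\img(x)) = U$. Moreover $V_m \subset \ker(x) = U \oplus \ker(x|_{W'})$ combined with $U \subset V_m$ yields, upon writing each $v \in V_m$ as $v = u + w$ with $u \in U$ and $w \in \ker(x|_{W'}) \subset W'$, the splittings
\[V_m = U \oplus V_m', \qquad V_{m-1} = U \oplus V_{m-1}', \qquad V_j' := V_j \cap W',\]
with $\dim V_m' = m-i$, $\dim V_{m-1}' = m-1-i$, and $V_m'$ isotropic in $W'$ (using $U \perp W'$ and the isotropy of both $U$ and $V_m$).

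\textbf{Transfer of the Hessenberg condition, inversion, and the $\sigma$-case.} Using $V_m = U \oplus V_m'$ together with $W_3 \perp W'$ one computes $V_m^\perp = U^{\perp_{W_3}} \oplus (V_m')^{\perp_{W'}}$, and a direct $3$-block calculation gives $x(U^{\perp_{W_3}}) = U$, so
\[xV_m^\perp = U \oplus x_0 (V_m')^{\perp_{W'}}.\]
The condition $xV_m^\perp \subset V_{m-1} = U \oplus V_{m-1}'$ therefore reduces exactly to $x_0 (V_m')^{\perp_{W'}} \subset V_{m-1}'$, that is, $(V_{m-1}', V_m') \in (\tau_{m-i}^{N-3i})^{-1}(x_0)$. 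Conversely, given a flag in the smaller fiber, the assignment $V_j := U \oplus V_j'$ recovers an element of the original fiber; the construction is manifestly algebraic and is a two-sided inverse, yielding an isomorphism of schemes. The $\sigma_m^N$-case is identical: replacing $xV_m^\perp$ by $xV_{m-1}^\perp$ and running the same computation gives $xV_{m-1}^\perp = U \oplus x_0 (V_{m-1}')^{\perp_{W'}}$, so the condition transfers to $x_0 (V_{m-1}')^{\perp_{W'}} \subset V_{m-1}'$, the defining condition for $(\sigma_{m-i}^{N-3i})^{-1}(x_0)$.
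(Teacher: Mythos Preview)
Your proof is correct and follows essentially the same approach as the paper's: both split off the size-$3$ Jordan blocks via an orthogonal $x$-invariant decomposition $V = W_3 \oplus W'$, use the key observation $\operatorname{Im}(x^2) \subset V_{m-1}$ (the paper phrases this as $\operatorname{Im}\,x \subset (\ker x)^\perp \subset V_m^\perp$, hence $\operatorname{Im}\,x^2 \subset V_{m-1}$), split the flag accordingly, and verify that the Hessenberg condition transfers exactly to the smaller space. The only cosmetic difference is that the paper fixes an explicit adapted basis while you work invariantly with the orthogonal decomposition; the resulting isomorphism (projection to $W'$, with inverse $V_j' \mapsto U \oplus V_j'$) is the same.
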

\begin{proof}
We prove the lemma for $\tau_m^N$. The argument for $\sigma_m^N$ is entirely similar and omitted. We have
$$(\tau_m^{N})^{-1}(x)\cong\{0\subset V_{m-1}\subset V_m\subset V_m^\p\subset V_{m-1}^\p\subset\bC^{N}\,|\,xV_m=0,\, xV_{m}^\p\subset V_{m-1}\}.$$
Let $(V_{m-1}\subset V_m)\in(\tau_m^{N})^{-1}(x)$. We have that $\on{Im}\,x\subset(\ker x)^\p\subset V_m^\p$. Thus $\on{Im}\,x^2\subset V_{m-1}$.

Choose a basis $\{x^ku_l,\, k\in[0,2],\, l\in[1,i],\ v_k,\,xv_k, \,k\in[1,j],\ w_l,\, l\in[1,N-3i-2j]\}$ of $V$ as in \cite[Lemma 5.6]{CVX}. 
Let $$U^0=\on{span}\{x^lu_k,l\in[0,2],k\in[1,i]\},\,V^0=\on{span}\{v_k,xv_k,k\in[1,j],w_l,\ l\in[1,N-3i-2j]\}.$$ Then $Q|_{U^0}$, $Q|_{V^0}$ are non-degenerate and $V^0=(U^0)^\p$.   We have
$$V_m=\on{Im}\,x^2\oplus W_{m-i}\text{ and }V_{m-1}=\on{Im}\,x^2\oplus W_{m-i-1},$$ where $W_{m-i}=V_m\cap V^0\supset W_{m-i-1}=V_{m-1}\cap V^0$. We have $\dim W_{m-i}=m-i$ and $\dim W_{m-i-1}=m-i-1$. Let $x_0=x|_{V^0}$. Then $x_0\in\cO_{2^j1^{N-3i-2j}}$. Note that $xV_m=0$ if and only if $x_0W_{m-i}=0$. Now $$V_m^\p=\on{span}\{x^lu_k,l=1,2,k\in[1,i]\}\oplus W_{m-i}^{\p_0},$$
where $W_{m-i}^{\p_0}$ denotes the orthogonal complement of $W_{m-i}$ in $V^0$ with respect to $Q|_{V^0}$. Thus $xV_m^\p\subset V_{m-1}$ if and only if $x_0W_{m-i}^{\p_0}\subset W_{m-i-1}$. This gives us the desired isomorphism
\beqn
(\tau_m^{N})^{-1}(x)\cong(\tau_{m-i}^{N-3i})^{-1}(x_0),\ (V_{m-1},V_m)\mapsto(\on{pr_{V^0}}(V_{m-1}),\on{pr_{V^0}}(V_{m}))
\eeqn
where $\on{pr_{V^0}}$ is the projection from $V$ to $V^0$ with respect to $V=U^0\oplus V^0.$
\end{proof}

Let $\on{OGr}(k,N)$ denote the orthogonal Grassmannian variety of $k$-dimensional isotropic subspaces in $\bC^N$ with respect to a non-degenerate bilinear form on $\bC^N$ and $\on{Gr}(k,N)$ denote the Grassmannian variety of $k$-dimensional subspaces in $\bC^N$.

By Lemma \ref{lemma-fiberiso}, to describe the fibers $(\tau_m^N)^{-1}(x)$ and $(\sigma_m^N)^{-1}(x)$, it suffices to consider the case when $x\in\cO_{2^j1^{N-2j}}$.  
We first introduce some notation. Let $x\in\cO_{2^j1^{N-2j}}$. We write $$\Sigma:=\ker x/\on{Im}x\text{ and }\bar{U}=U/(U\cap\on{Im}x)\text{ for }U\subset\ker x.$$ Define a bilinear from $(,)$ on $\on{Im} x$ by 
\beq\label{bilinear}
(xv,xw):=\langle v,xw\rangle_Q.\eeq
Using $(\on{Im}x)^\p=\ker x$, we see that $(,)$ is non-degenerate. For $U\subset\on{Im}x$, we define $$U^{\perp_{(,)}}=\{u\in\on{Im}x\,|\,(u,U)=0\}.$$
Let us denote 
$$\Upsilon_{m,j}^N:=(\tau_m^N)^{-1}(x),\ \ \Gamma_{m,j}^N:=(\sigma_m^N)^{-1}(x),\ \ x\in\cO_{2^j1^{N-2j}}.$$

 We partition $\Upsilon_{m,j}^N$ into pieces indexed by the dimension of $V_m\cap \operatorname{Im}x$ by letting 
\beqn
\Upsilon_{m,j}^{N,k}=\{(0\subset V_{m-1}\subset V_m\subset V_m^\p\subset V_{m-1}^\p\subset\bC^{N})\in\Upsilon_{m,j}^N\,|\,\dim(V_m\cap \operatorname{Im}x)=k\}.
\eeqn
We now describe the pieces $\Upsilon_{m,j}^{N,k}$. To this end, let
\beqn
\begin{gathered}
\Theta_{m,j}^{N,k}=\{0\subset V_m\subset V_m^\p\subset\bC^N\,|\,\dim(V_m\cap \operatorname{Im}x)=k,
\\
\hspace{.6in}xV_m=0,\, xV_m^\p\subset V_m,\, \dim(xV_m^\p)\leq m-1\}.
\end{gathered}
\eeqn
Consider the following map
\beqn
\eta:\Theta_{m,j}^{N,k}\to\on{Gr}(j-k,\on{Im}\,x)\times\on{Gr}(m-k,\Sigma),\ (V_m)\mapsto((V_m\cap\on{Im}\,x)^{\p_{(,)}},\bar{V}_m).
\eeqn
 We claim that
\beqn
\on{Im}\eta\cong\on{OGr}(j-k,\on{Im}\,x)\times\on{OGr}(m-k,\Sigma),
\eeqn
where $\on{Im}\,x$ is equipped with the non-degenerate bilinear form $(,)$ (see \eqref{bilinear}), and  $\Sigma$ is equipped with the non-degenerate bilinear form  induced by $\langle,\rangle_Q$. It is clear that $\bar V_m\subset\on{OGr}(m-k,\Sigma)$ as $\langle,\rangle_Q|_{V_m}=0$. It is easy to check that $x(V_m^\p)\subset(V_m\cap\on{Im}\,x)^{\p_{(,)}}$ and $\dim x(V_m^\p)=\dim\, (V_m\cap\on{Im}\,x)^{\p_{(,)}}=j-k$. Thus $x(V_m^\p)=(V_m\cap\on{Im}\,x)^{\p_{(,)}}$. Therefore the condition $xV_m^\p\subset V_m$ is equivalent to $(V_m\cap\on{Im}\,x)^{\p_{(,)}}\subset V_m\cap\on{Im}\,x$, i.e. $(V_m\cap\on{Im}\,x)^{\p_{(,)}}\in\on{OGr}(j-k,\on{Im}\,x)$.
This proves the claim.

Thus we obtain a surjective map 
\beq\label{map eta}
\eta:\Theta_{m,j}^{N,k}\to\on{OGr}(j-k,\on{Im}\,x)\times\on{OGr}(m-k,\Sigma)
\eeq 
and it is easy to see that the fibers of $\eta$ are affine spaces $\mathbb{A}^{(j-k)(m-k)}$. Note that the fiber of the natural projection map 
\beq\label{projection}
\Upsilon_{m,j}^{N,k}\to\Theta_{m,j}^{N,k}: (V_{m-1},V_m)\mapsto V_m
\eeq at $V_m$ is the projective space $\bP(V_m/(xV_m^\p))\cong\bP^{m-j+k-1}$.  It is easy to check using the above maps that each piece $\Upsilon_{m,j}^{N,k}$ has an affine paving. Therefore $\Upsilon_{m,j}^{N}$ also has an affine paving.

We can similarly partition $\Gamma_{m,j}^N$  into pieces indexed by the dimension of $V_{m-1}\cap \operatorname{Im}\,x$. Let
\begin{eqnarray*}
&&\Gamma_{m,j}^{N,k}=\{(0\subset V_{m-1}\subset V_m\subset V_m^\p\subset V_{m-1}^\p\subset\bC^{N})\in\Gamma_{m,j}^N\,|\,\dim(V_{m-1}\cap \operatorname{Im}\,x)=k\}\\
&&\Lambda_{m,j}^{N,k}=\{(0\subset V_{m-1}\subset V_{m-1}^\p\subset\bC^N)\,|\,\dim(V_{m-1}\cap \operatorname{Im}\,x)=k,\ xV_{m-1}=0,\ xV_{m-1}^\p\subset V_{m-1}\}.
\end{eqnarray*}
We have a surjective map
\beqn
\eta':\Lambda_{m,j}^{N,k}\to\on{OGr}(j-k,\on{Im}\,x)\times\on{OGr}(m-k-1,\Sigma),\ (V_{m-1})\mapsto((V_{m-1}\cap\on{Im}\,x)^{\p_{(,)}},\bar{V}_{m-1}).
\eeqn
 The fibers of $\eta'$ are affine spaces $\mathbb{A}^{(j-k)(m-k-1)}$. The fiber of the natural projection map $$\Gamma_{m,j}^{N,k}\to\Lambda_{m,j}^{N,k}:\ (V_{m-1},V_m)\mapsto V_{m-1}$$ at a given $V_{m-1}$ is the variety of isotropic lines in $(V_{m-1}^\p\cap\ker\,x)/V_{m-1}$  with respect to the quadratic form induced by $Q$. The same argument as before shows that $\Gamma_{m,j}^{N}$ is paved by affines.

In particular, we see from the above discussion that  
\beq\label{nonempty}
\begin{gathered}
\Upsilon_{m,j}^{N,k}\neq\emptyset\Leftrightarrow\max\{m+j-N/2,j/2,j+1-m\}\leq k\leq\min\{j,m\};\\
\Gamma_{m,j}^{N,k}\neq\emptyset\Leftrightarrow\max\{m+j-N/2-1,j/2,j+1-m\}\leq k\leq\min\{j,m-1\}.
\end{gathered}
\eeq

Finally, in \cite[Proof of Proposition 4.3]{CVX} we have used the following fact 
\begin{lemma}
For $x_i\in\cO_{3^i2^{2m-1-2i}1^{2n-4m+3+i}}$ we have 
\beqn
(\tau_m^{2n+1})^{-1}(x_i)\cong\on{OGr}(m-1-i,2m-1-2i)\,.
\eeqn
\end{lemma}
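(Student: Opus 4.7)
The plan is to combine Lemma~\ref{lemma-fiberiso} with the affine‐paving analysis of \S\ref{affine pavings}. First, apply Lemma~\ref{lemma-fiberiso}: with $N = 2n+1$, $j = 2m-1-2i$, one computes $N - 3i - 2j = 2n - 4m + 3 + i$, so the lemma gives
$$(\tau_m^{2n+1})^{-1}(x_i) \;\cong\; (\tau_{m'}^{N'})^{-1}(x_0) \;=\; \Upsilon_{m', j'}^{N'},$$
where $m' = m-i$, $j' = 2m-1-2i = 2m'-1$, $N' = 2n+1-3i$, and $x_0 \in \mathcal O_{2^{j'} 1^{N'-2j'}}$. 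It therefore suffices to show $\Upsilon_{m', 2m'-1}^{N'} \cong \operatorname{OGr}(m'-1, 2m'-1)$.

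The crux is to show that the stratification $\Upsilon_{m',j'}^{N'} = \bigsqcup_k \Upsilon_{m',j'}^{N',k}$ collapses to a single stratum. The non-emptiness criterion \eqref{nonempty} requires $\max\{m'+j'-N'/2,\; j'/2,\; j'+1-m'\} \leq k \leq \min\{j', m'\}$. Since $j' = 2m'-1$, one has $\lceil j'/2\rceil = m'$, $j' + 1 - m' = m'$, and $\min\{j', m'\} = m'$; moreover the inequality $m' + j' - N'/2 \leq m'$ rearranges to $4m \leq 2n+3+i$, which is precisely the non-negativity of the exponent $2n-4m+3+i$ of $1$ in the partition of $x_i$. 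Hence the only non-empty stratum is $k = m'$, i.e., every point of $\Upsilon_{m',j'}^{N'}$ automatically satisfies $V_m \subset \operatorname{Im} x_0$.

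Now specialize the two auxiliary fibrations of \S\ref{affine pavings} to $k = m'$. The map $\eta$ of \eqref{map eta} lands in $\operatorname{OGr}(j'-m', \operatorname{Im} x_0) \times \operatorname{OGr}(0, \Sigma) = \operatorname{OGr}(m'-1, \operatorname{Im} x_0) \times \{\mathrm{pt}\}$, with fibers $\mathbb{A}^{k(m'-k)} = \mathbb{A}^0$, so $\eta$ is a bijection; thus $\Theta_{m',j'}^{N',m'} \cong \operatorname{OGr}(m'-1, \operatorname{Im} x_0)$. The projection \eqref{projection} from $\Upsilon_{m',j'}^{N',m'}$ to $\Theta_{m',j'}^{N',m'}$ has fibers $\mathbb{P}^{m'-j'+k-1} = \mathbb{P}^0$, again a single point. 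Composing and using $\dim \operatorname{Im} x_0 = j' = 2m-1-2i$ gives
$$(\tau_m^{2n+1})^{-1}(x_i) \;\cong\; \operatorname{OGr}(m-1-i,\; 2m-1-2i),$$
as desired.

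The only delicate point is verifying the single-stratum claim, which is pure bookkeeping of the inequalities in \eqref{nonempty}; once this is in hand the rest is an immediate specialization of the maps $\eta$ and \eqref{projection} already constructed in \S\ref{affine pavings}, since both the affine and the projective fibers degenerate to points. I expect no essential obstacle beyond the care needed to see that each step of the composition is a scheme-theoretic isomorphism rather than merely a bijection on points, which follows because the constructions are manifestly algebraic and the fibers have uniform dimension zero.
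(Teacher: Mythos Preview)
Your proposal is correct and follows essentially the same approach as the paper: reduce via Lemma~\ref{lemma-fiberiso} to $\Upsilon_{m-i,\,2m-1-2i}^{2n+1-3i}$, use the inequalities in \eqref{nonempty} to see that only the stratum $k=m-i$ survives, and then observe that for this $k$ both the map $\eta$ of \eqref{map eta} and the projection \eqref{projection} have zero-dimensional fibers, yielding the orthogonal Grassmannian. The paper states this argument in a single sentence; you have simply unpacked the bookkeeping.
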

 This can be deduced from the results in this subsection as follows. Using Lemma \ref{lemma-fiberiso} and \eqref{nonempty} we see that $\tau_m^{2n+1}(x_i)\cong\Upsilon_{m-i,\,2m-1-2i}^{2n+1-3i,\,m-i}$. The conclusion follows by considering the maps in \eqref{map eta} and \eqref{projection}.

\subsection{Families of complete intersections of quadrics and their identification with Hessenberg varieties.}\label{ssec-quadrics}
Let $m\in[1, N-1]$ be an integer. 
For any $s\in \fg_1^{rs}$, let $$X_{m,s}\subset\mathbb P(V)\is\mathbb P^{N-1}$$ be the complete intersection of 
$m$ quadrics $$\langle s^i-,-\rangle_Q=0,\ i=0,\ldots,m-1$$ in $\mathbb P(V)$.
As $s$ varies over $\grs$, we get a family $$\pi_m:X_m\ra\grs$$
of complete intersections of 
$m$ quadrics in $\mathbb P(V)$.

 The families of Hessenberg varieties $\Hess_l^{O,\p}$ and
$\Hess_l^{E,\p}$ over $\Lg_1^{rs}$ are identified with $X_m$'s as follows.
\begin{thm}\label{H=X}
Assume that $k\leq \frac{N-1}{2}$. Then we have 
\begin{enumerate}
\item There is a $K$-equivariant isomorphism $\Hess_k^{O,\p}|_{\grs}\is X_{2k-1}$ of varieties over
$\grs$. 
\item
There is a $K$-equivariant isomorphism $\Hess_k^{E,\p}|_{\grs}\is X_{2k}$ of varieties over
$\grs$.
\end{enumerate}
\end{thm}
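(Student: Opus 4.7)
The plan is to construct, for each $s\in\fg_1^{rs}$, explicit mutually inverse $K$-equivariant morphisms $\phi_s:X_{m,s}\to\Hess_k^{\star,\perp}|_s$ and $\psi_s:\Hess_k^{\star,\perp}|_s\to X_{m,s}$, where $(m,\star)=(2k-1,O)$ for~(1) and $(m,\star)=(2k,E)$ for~(2). The $K$-equivariance will be manifest since the constructions commute with the $K$-action (which preserves $Q$), so these glue into a $K$-equivariant isomorphism over $\fg_1^{rs}$.

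Define $\phi_s([v]):=(V_{k-1},V_k)$ with $V_j:=\on{span}\{v,sv,\ldots,s^{j-1}v\}$. Since $s\in\fg_1^{rs}$ is self-adjoint for $\langle,\rangle_Q$ with distinct eigenvalues $\lambda_1,\ldots,\lambda_N$, pick a $\langle,\rangle_Q$-orthogonal eigenbasis $e_1,\ldots,e_N$; non-degeneracy of $Q$ on $\bigoplus\bC e_i$ forces $\langle e_i,e_i\rangle_Q\neq 0$. Writing $v=\sum c_i e_i$, the quadric conditions $\langle s^jv,v\rangle_Q=0$ for $0\leq j\leq m-1$ become the Vandermonde system $\sum_i c_i^2\langle e_i,e_i\rangle_Q\,\lambda_i^j=0$, forcing $|\on{supp}(v)|\geq m+1\geq 2k$; in particular $v,sv,\ldots,s^{k-1}v$ are linearly independent. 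The isotropy $\langle V_k,V_k\rangle_Q=0$ amounts to $\langle s^iv,v\rangle_Q=0$ for $0\leq i\leq 2k-2$ (the first $2k-1$ quadrics), and $sV_{k-1}\subset V_k$ is tautological; in~(2) the extra condition $sV_k\subset V_k^\perp$ accounts for the remaining quadric $\langle s^{2k-1}v,v\rangle_Q=0$. Hence $\phi_s$ is well-defined and algebraic in $(v,s)$.

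For the inverse put $\psi_s(V_{k-1},V_k):=V_{k-1}\cap s^{-1}V_{k-1}\cap\cdots\cap s^{-(k-2)}V_{k-1}$ for $k\geq 2$, and $\psi_s:=V_1$ for $k=1$. I claim this is a line. For the lower bound $\dim\psi_s\geq 1$, set $W_0:=V_{k-1}$ and $W_j:=W_{j-1}\cap s^{-j}V_{k-1}$; the map $W_{j-1}\to V_k/V_{k-1}$, $w\mapsto s^jw+V_{k-1}$, is well-defined because $s^jw\in sV_{k-1}\subset V_k$, and has kernel $W_j$, so $\dim W_j\geq\dim W_{j-1}-1$ and iteration gives $\dim W_{k-2}\geq 1$. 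For the upper bound $\dim\psi_s\leq 1$, a non-zero $v_1\in\psi_s$ satisfies $\bC[s]_{<k-1}v_1\subset V_{k-1}$, and the $s$-cyclic subspace $\bC[s]v_1=\on{span}\{e_i:i\in\on{supp}(v_1)\}$ carries a non-degenerate diagonal form; the existence of a $(k-1)$-dimensional isotropic subspace $V_{k-1}\subset\bC[s]v_1$ then forces $|\on{supp}(v_1)|\geq 2(k-1)$ by the Witt-index bound, together with $V_{k-1}=\bC[s]_{<k-1}v_1$. Any other $v_2\in\psi_s$ is of the form $v_2=p(s)v_1$ with $p\in\bC[\lambda]_{<k-1}$, and the conditions $s^jv_2\in V_{k-1}$ for $j=0,\ldots,k-2$ translate into polynomial identities $\lambda^jp(\lambda)=q_j(\lambda)$ on $\on{supp}(v_1)$ with $q_j\in\bC[\lambda]_{<k-1}$; since $|\on{supp}(v_1)|\geq 2k-2$ exceeds $\deg(\lambda^jp-q_j)\leq 2k-4$, these lift to equalities of polynomials, forcing $p$ to be a non-zero constant and hence $v_2\in\bC v_1$.

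Finally $\psi_s\circ\phi_s=\on{id}$ is immediate. For $\phi_s\circ\psi_s=\on{id}$, if $v$ spans $\psi_s(V_{k-1},V_k)$, the support bound yields linear independence of $v,sv,\ldots,s^{k-2}v\in V_{k-1}$, so they span $V_{k-1}$, and then $V_k=V_{k-1}+\bC s^{k-1}v=\on{span}\{v,sv,\ldots,s^{k-1}v\}$ follows from $sV_{k-1}\subset V_k$ and dimensions; the $E$-case is identical using the additional Hessenberg condition $sV_k\subset V_k^\perp$. The main obstacle is the upper bound $\dim\psi_s\leq 1$, which synthesises the Witt-index constraint coming from isotropy of $V_{k-1}$ with a Vandermonde/polynomial interpolation argument.
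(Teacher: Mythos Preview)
Your maps $\phi_s$ and $\psi_s$ coincide with the paper's: the paper defines the inverse recursively by $V'_{i-1}=\ker(V'_i\xrightarrow{s}V'_{i+1}\to V'_{i+1}/V'_i)$, and unwinding this gives exactly your intersection $V_{k-1}\cap s^{-1}V_{k-1}\cap\cdots\cap s^{-(k-2)}V_{k-1}$. The difference lies entirely in the verifications. The paper extracts one clean observation---for $s\in\fg_1^{rs}$ and $U$ nonzero isotropic, $\dim(sU\cap U)<\dim U$, since otherwise $U$ would be $s$-stable and contain an isotropic eigenvector---and uses it uniformly both to show $\dim V_i=i$ in the forward direction and to run the recursive dimension count $\dim V'_{i-1}=i-1$ for the inverse. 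Your route instead diagonalizes $s$ and replaces this with explicit Vandermonde, Witt-index, and polynomial-interpolation arguments. Both work; the paper's is shorter and coordinate-free, while yours makes the numerology (support sizes, degree bounds) completely transparent.

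One step in your upper-bound argument is phrased circularly and should be tightened. You assert ``the existence of a $(k-1)$-dimensional isotropic subspace $V_{k-1}\subset\bC[s]v_1$'' before establishing that $V_{k-1}\subset\bC[s]v_1$; at that point you only know $\bC[s]_{<k-1}v_1\subset V_{k-1}$. The fix is to first argue $|\on{supp}(v_1)|\geq k-1$: if $r:=|\on{supp}(v_1)|\leq k-2$ then $\bC[s]v_1=\bC[s]_{<k-1}v_1\subset V_{k-1}$ is simultaneously isotropic and carries a nondegenerate form, a contradiction. Once $r\geq k-1$, dimension forces $V_{k-1}=\bC[s]_{<k-1}v_1\subset\bC[s]v_1$, and then your Witt-index bound $r\geq 2(k-1)$ and the interpolation argument go through as written.
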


We begin with the following simple observation.
\beq\label{ISO}
\text{Let $s\in\fg_1^{rs}$.  
For any isotropic subspace $0\neq U\subset V$,  
$\dim (sU\cap U)< \dim(sU)$.}
\eeq
This follows from the fact that $s$ has no isotropic eigenspaces.

\begin{proof}[Proof of Theorem \ref{H=X}]
We first define a map from $X_{2k-1}$ to $\Hess_n^{O,\p}$.
Let $(s,l)\in X_{2k-1}$, where $s\in \Lg_1^{rs}$ and $l$ is in the 
complete intersection of 
$2k-1$ quadrics $\langle s^i-,-\rangle_Q=0$, $i=0,\ldots,2k-2$, in $\mathbb P(V)$. 
Let $0\neq v\in l$. For $1\leq i\leq k$, consider the subspaces 
$$V_{i}=\on{span}\{ v,sv,...,s^{i-1}v\}.$$ 
Note that $V_{i}$ is isotropic. We show that $\dim V_{i}=i.$ We have $V_{i}=V_{i-1}+ sV_{i-1}$. Thus $\dim V_{i}=\dim V_{i-1}+\dim sV_{i-1}-\dim(sV_{i-1}\cap V_{i-1})>\dim V_{i-1}$, where in the last inequality we use \eqref{ISO}. By induction we see that $\dim V_{i}=i$. Hence
the assignment $(s,l)\mapsto (s,V_{k-1}\subset V_k)$ defines a map 
\[\iota:X_{2k-1}\ra \Hess_k^{O,\p}|_{\Lg_1^{rs}}.\]
One checks readily that $\iota$ is $K$-equivariant. 
We prove that $\iota$ is an isomorphism by constructing an 
explicit inverse map.
Let $(s, V_{k-1}'\subset V_k')\in\Hess_k^{O,\p}$ with $s\in \Lg_1^{rs}$.
We define a sequence of subspaces 
$0\subset V_1'\subset V_2'\subset\cdots\subset V_{k-2}'$ recursively. Let us first define $V_{k-2}'$. Consider the map $\bar{s}:V_{k-1}'\xrightarrow{s} V_k'\to V_k'/V_{k-1}'$. Note that by \eqref{ISO}, the map $\bar s$ is nonzero, hence surjective as $\dim V'_{k}/V'_{k-1}=1$. Let $V_{k-2}'=\ker\bar s$. We have $\dim V_{k-2}'=k-2$ and $V_{k-1}'=V_{k-2}'\cup sV_{k-2}'$.  By induction we can assume that we have defined $V_i'$ such that $\dim V_i'=i$ and 
$V_{i+1}'=V_{i}'\cup sV_{i}'$. Let
$$V_{i-1}'=\ker(\bar s:V_{i}'\xrightarrow{s}{V_{i+1}'}\ra V'_{i+1}/V'_{i}).$$
The same argument as before shows that $\dim V_{i-1}'=i-1$ and 
$V_{i}'=V_{i-1}'\cup sV_{i-1}'$.
Thus in particular we obtain that $\dim V_1'=1$,  
and it is easy to see that the map 
 \[\Hess_k^{O,\p}|_{\Lg_1^{rs}}\ra X_{2k-1}, (s, V_{k-1}'\subset V_k')\mapsto (s,V_1').\]
defines an inverse of $\iota$. 
 This finishes the proof of (1). 
 
 For (2), we observe that, under the isomorphism $\iota: X_{2k-1}\is\Hess_k^{O,\p}$, the equation $\langle s^{2k-1}v,v\rangle_Q=0$ for the divisor 
 $X_{2k}\subset X_{2k-1}$ becomes $\langle sV_{k}',V_k'\rangle_Q=0$,
which is the equation for the divisor $\Hess_k^{E,\p}\subset\Hess_k^{O,\p}$. Thus (2) follows.
\end{proof}

\section{Complete intersections of quadrics
and their double covers}\label{generalization of T}
In \S\ref{ssec-quadrics} we have introduced the families $X_m\ra\grs$ of complete intersections of quadrics, which we have identified with families of Hessenberg varieties $\on{Hess}_n^{E,\p}|_{\Lg_1^{rs}}$, $\on{Hess}_n^{O,\p}|_{\Lg_1^{rs}}$. In order to study the monodromy representations of the equivariant fundamental group $\pi_1^K(\Lg_1^{rs})$ associated with the above families of Hessenberg varieties,  we introduce families $\mY_m$ of branched covers  of $\mathbb P^{N-m-1}$ and 
relate them with  $X_m$. We also introduce a family of branched double covers of $X_m$, denoted by $\tilde{X}_m$, and relate them to families 
 $\tilde\mY_m$
of branched covers 
of $\mathbb P^{N-m-1}$ which we introduce in \S\ref{the case of tX}. Our construction can be regarded as a relative version of the  construction in \cite{T}. 

\subsection{Some notation}

In this section we choose a Cartan subspace $\fa\subset\Lg_1$ that consists of diagonal matrices. Let $\fa^{rs}=\fa\cap\Lg^{rs}$. We write  an element $a\in \fa$ with diagonal entries $a_1,\ldots, a_N$ as $a=(a_1,\ldots,a_N)$ (note that we have diagonalized the elements in $\fa$ with respect to a standard basis $f_i$ of $V$, where $\langle f_i,f_j\rangle_Q=\delta_{i,j}$, rather than the basis chosen in \S\ref{two Hessenberg}). Thus $a=(a_1,\ldots,a_N)\in\ars$ if and only if $a_i\neq a_j$ for $i\neq j$.

Define 
$$I_N:=(\bZ/2\bZ)^{N}/(\bZ/2\bZ)$$ 
where we regard $\bZ/2\bZ$ as a subgroup of $(\bZ/2\bZ)^{N}$ via the diagonal embedding.
For any $\chi\in I_N^\vee=\Hom(I_N,\bG_m)$, we define 
\[\on{supp}(\chi)=\{i\in [1,N]\,|\,\chi(\xi_i)=-1 \}\text{ and }|\chi|=\#\on{supp}(\chi)\]
where $\xi_i$ is the image of $(0,..,1,...,0)\in(\bZ/2\bZ)^{N}$ in $I_N$.
Note that $|\chi|$ is even.

If we identify the centralizer $Z_K(a)$ of $a\in\ars$ with the 
kernel of the map $(\bZ/2\bZ)^{N}\ra \bZ/2\bZ,\ (b_1,...,b_N)\mapsto\sum b_i$, 
we obtain a 
natural map 
\beq\label{natural map}
Z_K(a)\ra (\bZ/2\bZ)^{N}\stackrel{pr}\longrightarrow I_N.
\eeq
Note when $N$ is odd which we will assume from this point on,  the map \eqref{natural map}
is an 
isomorphism. Therefore, in what follows, we often make the canonical identification $Z_K(a)\cong I_N$. 

To emphasize:
\begin{Assumption}
From now on we assume that $N$ is odd.
\end{Assumption}

\subsection{Family of curves}\label{curves}In this subsection we introduce certain families of curves which will be used to construct the families $\mY_m$ and $\tilde\mY_m$ of branched covers of projective spaces.

For any $a=(a_1,...,a_{N})\in \fa^{rs}$, there are 
natural isomorphisms
\[
\pi_1^{ab}(\mathbb P^1-\{a_1,...,a_{N}\})\otimes\bZ/2\bZ\is I_N
\ \ \quad \pi_1^{ab}(\mathbb P^1-\{a_1,...,a_{N},a_{N+1}=\infty\})\otimes\bZ/2\bZ\is I_{N+1}.
\]
The isomorphisms are given by assigning to a small loop around each $a_i$ the element in $I_N$ (resp. $I_{N+1}$) with only non-trivial coordinate in position $i$.
Let $$\text{$C_a\ra\mathbb P^1$ (resp. $\tilde C_a\ra\mathbb P^1$)}$$ be the abelian covering of $\mathbb P^1$ ramified at $\{a_1,...,a_{N}\}$ (resp. $\{a_1,...,a_{N+1}=\infty\}$)
with Galois groups given by $I_N$ (resp. $I_{N+1}$). 
Concretely, $C_a$ (resp. $\tC_a$) is the smooth projective curve corresponding to the 
function field $$\text{$\bC(t)((\frac{t-a_i}{t-a_1})_{i=2,...,N}^{1/2})$ (resp. $\bC(t)((t-a_i)_{i=1,...,N}^{1/2})$).}$$
The group $I_N$ (resp. $I_{N+1}$) acts on $C_a$ (resp. $\tilde C_a$). For any $\chi\in I_N^\vee$ (resp. $\chi\in I_{N+1}^\vee$) we define 
$$
\text{$C_{a,\chi}=C_a/\ker\chi$ (resp. $\tilde C_{a,\chi}=\tilde C_a/\ker\chi$),}
$$
which is a branched double cover of $\mathbb P^1$ with branch locus $\{a_i\,|\,i\in\on{supp}(\chi)\}$. Concretely,  $C_{a,\chi}$ (resp. $\tC_{a,\chi}$) is isomorphic to  
the smooth projective hyperelliptic curve with affine equation 
$$
y^2=\prod_{i\in\on{supp}\chi }(x-a_i)\text{ (resp. }
y^2=\prod_{i\in\on{supp}\chi,\ i\neq N+1 }(x-a_i)).
$$ 
We have $\dim H^1(C_\chi,\bC)=|\chi|-2$ (resp. $\dim H^1(\tilde C_\chi,\bC)=|\chi|-2$).

As $a$ varies over $\ars$, we obtain a family of 
curves $$\text{$C\ra\ars$ (resp. $\tilde C\ra\ars$)}$$ and we similarly obtain families of hyperelliptic curves 
$$\text{$C_\chi\ra\ars$ (resp. $\tilde C_\chi\ra\ars$) for any $\chi$.}$$ 
 We also note that the Weyl group $W=S_N$ acts naturally on $C$ (resp. $\tC$) making the 
projection $C\ra\ars$ (resp. $\tC\ra\ars$) a $W$-equivariant map. 

We will now associate monodromy representations to these families. Let us  fix $a\in\ars$ and a character $\chi\in I_N^\vee$ (resp. $\chi\in I_{N+1}^\vee$) and we recall that $\pi_1(\fa^{rs},a)$ is the pure braid group $P_N$. The monodromy representation of the family  $C_\chi\ra \ars$ factors through the symplectic group and we denote it by 
$\rho_{C_\chi}:P_{N}\ra Sp(H^1(C_{a,\chi},\bC))\is Sp(2m-2)$  where the $m=\frac {|\chi|} 2$. Similarly we obtain a monodromy representation $\rho_{\tilde C_\chi}:P_{N}\ra Sp(H^1(\tilde C_{a,\chi},\bC))\is Sp(2m-2).$

We claim:
\beq
\label{The monodromy representation is Zariski dense}
\begin{gathered}
\text{The images of the representations $\rho_{C_\chi}$ and $\rho_{\tilde C_\chi}$  are Zariski dense}
\\
\text{in $Sp(H^1(C_{a,\chi},\bC))$ and  $Sp(H^1(\tilde C_{a,\chi},\bC))$, respectively}\,.
\\
\text{In particular, the representations $\rho_{C_\chi}$ and $\rho_{\tilde C_\chi}$ are irreducible}\,.
\end{gathered}
\eeq
We see this as follows. Consider the following subvariety $\fa^{rs}(a,\chi)\subset\fa^{rs}$
\beq\label{subvariety of ars}
\fa^{rs}(a,\chi)=\{a'\in\fa^{rs}\,|\,a'_i=a_i\ \text{if}\ i\notin\on{supp}(\chi) \}.
\eeq
It suffices to show that the monodromy representation of the restriction of 
$C_\chi\ra\ars$ to $\fa^{rs}(a,\chi)$ has Zariski dense image. Now, $\fa^{rs}(a,\chi)$ is an open subset of the space $\cM_{2m}$ of $2m$ distinct marked points in $\bC$ and the family $C_\chi\times_\ars\fa^{rs}(a,\chi)$ is the restriction of the universal family of hyperelliptic curves parametrized by $\cM_{2m}$. Note further, that $\cM_{2m}$ itself is an open subset of the space $\tilde\cM_{2m}$ of $2m$ distinct marked points in $\bP^1$ carrying its own family of hyperelliptic curves.  Now, by \cite{A} (see also \cite[Theorem 10.1.18.3]{KS}), the monodromy representation on $\tilde\cM_{2m}$ is irreducible and has Zariski dense image. Therefore 
$\rho_{C_\chi}$, as a restriction to an open subset, has the same property. The argument in the case $\tilde C_\chi\ra\ars$ is completely analogous except one has to take into account that $a_{N+1}=\infty$.

Finally, there is a unique character $\chi_0\in I_{N+1}^\vee$ with 
$|\chi_0|=N+1$ (here we use the assumption that $N$ is odd). The character $\chi_0$ is invariant under the Weyl group action. Thus we can pass to a quotient of $\tC_{\chi_0}\ra\ars$ under the $W$ action and in this way obtain a family $\overline C_{\chi_0}\ra\fc^{rs}=\ars/W$. The family $\overline C_{\chi_0}\ra\fc^{rs}=\ars/W$ is the universal family 
of hyperelliptic curves $y^2=\prod_{i=1}^N(x-a_i)$  and $\tC_{\chi_0}\ra\ars$ is a similar universal family  with marked ramification points.

\subsection{Branched cover $\mY_m$ of projective spaces and $X_m$}
\label{branched cover X_m}
Define 
\beqn\bar I^{N-m-1}_N=\ker(sum:I_N^{N-m-1}\ra I_N),
\eeqn
where $sum$ is the summation map. Fix $a=(a_1,\ldots,a_N)\in\ars$. Let $C_a\ra\bbP^1$ be the 
curve introduced 
in \S\ref{curves}.
The semi-direct  product $\bar I_N^{N-m-1}\rtimes S_{N-m-1}$ acts naturally
on $C_a^{N-m-1}$ and we define $$\mY_{m,a}=C_a^{N-m-1}/\bar I_N^{N-m-1}\rtimes S_{N-m-1}.$$
We have a natural map 
\[\iota_a:\mY_{m,a}\ra C_a^{N-m-1}/I^{N-m-1}_N\rtimes S_{N-m-1}\is\mathbb P^{N-m-1}.\] 
According to \cite[Proposition 2.4.4]{T}, 
for a suitable choice of homogeneous coordinates\linebreak $[x_1,...,x_{N-m}]$
of $\bbP^{N-m-1}$,
each ramification point $a_i$
defines a 
hyperplane  
\beq\label{H_a}
H_{a,i}=x_1+a_ix_2+\cdot\cdot +a^{N-m-1}_ix_{N-m}=0
\eeq
in $\bbP^{N-m-1}$ and the map $\iota_a$
is an $I_N$-branched cover of $\mathbb P^{N-m-1}$ with branch locus 
$\{H_{a,i}=0\}_{i=1,...,N}$. As $a$ varies over $\ars$, we get a $\ars$-family
of $I_N$-branched 
covers of $\mathbb P^{N-m-1}$
\beqn
\xymatrix{\mY_m
\ar[rd]\ar[rr]^{ \iota}&&\mathbb P^{N-m-1}_\ars\ar[ld]
\\&\ars&}
\eeqn
where $$\mY_m=
C^{N-m-1}/\bar I_N^{N-m-1}\rtimes S_{N-m-1}$$ and the 
base change of $\iota$ to $a$ is equal to $\iota_a$.
Observe that the $ W$-action on $C$ induces a $ W$-action on $\mY_m$
making the projection $\mY_m\ra\ars$ a $W$-equivariant map.

Let $X_{m}\ra\grs$ be the family of complete intersections of quadrics introduced in \S\ref{ssec-quadrics}.
For $a=(a_1,...,a_{N})\in\ars$ the 
equation of $X_{m,a}$ is given by 
\[a_1^iv_1^2+\cdot\cdot+a_{N}^iv_{N}^2=0,\ \ \ i=0,...,m-1.\]
Consider the map $$s:\mathbb P(V)\ra\mathbb P(V),\  
[v_1,...,v_{N}]\mapsto [v_1^2,...,v_{N}^2].$$
The image 
$s(X_{m,a})$ is equal to $\mathbb P(V_{m,a})$, where 
\beq\label{V_a}
V_{m,a}=\{v\in V\,|\, a_1^iv_1+\cdot\cdot+a_{N}^iv_{N}=0,\ i=0,...,m-1\}\subset V.
\eeq  
The resulting 
map \[s_{a}:X_{m,a}\ra\mathbb P(V_{m,a})\]
is an $I_N$-branched cover with branch locus $\{v_i=0\}_{i=1,...,N}$. 
As $a$ varies over $\ars$, 
we obtain 
\beqn
\xymatrix{X_m|_\ars\ar[rd]\ar[rr]^s&&\mathbb P(V_m)\ar[ld]
\\&\ars&}
\eeqn
Here $V_m\ra\ars$ is the vector bundle over $\ars$ whose fiber over 
$a$ is $V_{m,a}$ and $\mathbb P(V_m)$ is the associated projective bundle.

The two families 
$X_m|_\ars$ and $\mY_m$ are related as follows. Let 
\[
(\tilde\fa^{rs})'=\{(a,c)\,|\,a=(a_1,...,a_N)\in\ars,
c=(c_1,...,c_N)\in\bC^N,
\ c_i^2=d_i:=\prod_{j\neq i}(a_j-a_i)\}.
\]
The projection $(\tilde\fa^{rs})'\ra \ars,\ (a_1,...,a_N,c_1,...,c_N)\mapsto (a_1,...,a_N)$ 
realizes $(\tilde\fa^{rs})'$ as a 
$(\bZ/2\bZ)^N$-torsor over $\ars$. 
Consider the following $I_N$-torsor over $\ars$
\beq\label{tars}
\tilde\fa^{rs}:=(\tilde\fa^{rs})'/(\bZ/2\bZ)\ra\ars;
\eeq
here  we view $\bZ/2\bZ$ as a subgroup of  $(\bZ/2\bZ)^N$  via the diagonal embedding.  The Weyl group $W$ acts naturally on 
$\tilde\fa^{rs}$ making the projection to $\ars$ a $W$-equivariant map.  We observe that the $I_N$-torsor $\tilde{\fa}^{rs}$ of \eqref{tars} gives rise to the following canonical map
\beq\label{the map rho}
\rho:\pi_1(\ars,a)\cong P_N\to I_N.
\eeq

\begin{proposition}\label{prop-xy}
We have an $I_N\rtimes{W}$-equivariant isomorphism 
\beq\label{X_m=Y_m twist}
X_m|_\ars\is(\mY_m\times_\ars\tilde\fa^{rs})/I_N:=\mY_m^t,
\eeq
where ${W}$ (resp. $I_{N}$) acts on $\mY_m^t$ by the diagonal action (resp. 
on the first factor).
\end{proposition}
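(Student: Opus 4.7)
The plan is to construct the isomorphism \eqref{X_m=Y_m twist} by first identifying the bases of the two $I_N$-branched covers $s_a:X_{m,a}\to\mathbb{P}(V_{m,a})$ and $\iota_a:\mY_{m,a}\to\mathbb{P}^{N-m-1}$ via Lagrange interpolation, and then showing that the two cover structures differ precisely by the $I_N$-torsor $\tilde\fa^{rs}$.

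For $a=(a_1,\dots,a_N)\in\ars$ set $d_j=\prod_{k\neq j}(a_j-a_k)$. The classical residue identity $\sum_j a_j^i/d_j=0$ for $0\leq i\leq N-2$ shows that the Lagrange map $\bC[t]_{\leq N-m-1}\to V_{m,a}$, $P\mapsto(P(a_j)/d_j)_{j=1}^N$, lands in $V_{m,a}$ (cf.\ \eqref{V_a}) and, by dimension count, is a bijection. Writing $P(t)=x_1+x_2t+\cdots+x_{N-m}t^{N-m-1}$, this yields a canonical $W$-equivariant isomorphism of projective bundles $\mathbb{P}(V_m)|_\ars\cong\mathbb{P}^{N-m-1}_\ars$ under which the branch hyperplane $\{v_j=0\}$ of $s_a$ becomes the hyperplane $H_{a,j}=\{P(a_j)=0\}$ of \eqref{H_a}.

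Under this identification, both $X_m|_\ars$ and $\mY_m$ are $I_N$-branched covers of the same projective bundle with the same branch locus. The $X_m$-cover is defined by the class of $(\sqrt{v_j})_j$ modulo common sign. For $\mY_m$, I would unpack $\mY_m=C^{N-m-1}/\bar I_N^{N-m-1}\rtimes S_{N-m-1}$ as follows: factor $P(t)=\prod_{k=1}^{N-m-1}(t-t_k)$ and let $y_{j,k}=\sqrt{t_k-a_j}$ be the tautological functions on $C_a^{N-m-1}$. Then $\prod_k y_{j,k}$ is $S_{N-m-1}$-invariant, and is also $\bar I_N^{N-m-1}$-invariant because $\sum_k\chi_k=0$ in $I_N$ for any $(\chi_k)\in\bar I_N^{N-m-1}$. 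Thus $\prod_k y_{j,k}$ descends to a function on $\mY_{m,a}$ which is a square root of $\pm P(a_j)=\pm v_jd_j$, and the residual $I_N$-action makes $\xi_j$ flip the sign of $\prod_k y_{j,k}$.

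Hence $\mY_m$ is the $I_N$-cover defined by the class of $(\sqrt{v_jd_j})_j$, so it differs from $X_m$ by the $I_N$-torsor of compatible choices of $(\sqrt{d_j})_j$ modulo common sign — which is exactly $\tilde\fa^{rs}\to\ars$ of \eqref{tars}. The twist $\mY_m^t=(\mY_m\times_\ars\tilde\fa^{rs})/I_N$ then carries the invariant function $\sqrt{v_jd_j}/\sqrt{d_j}=\sqrt{v_j}$, producing a canonical morphism $\mY_m^t\to X_m|_\ars$ which is an isomorphism since both sides are $I_N$-covers of the same bundle with identical ramification. Equivariance is automatic from the construction: $W$ permutes $(a_j),(v_j),(d_j)$ compatibly on all three factors, and the residual (first-factor) $I_N$-action on $\mY_m^t$ flips $\sqrt{v_j}$, matching the $I_N$-action on $X_m$. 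I expect the main obstacle to be Step 2: matching Terasoma's $\iota_a$ of \cite[Proposition 2.4.4]{T} with the Vandermonde coordinates above, so that the descended function $\prod_k y_{j,k}$ is correctly identified as $\sqrt{P(a_j)}=\sqrt{v_jd_j}$; once this is in place, the twist by $\tilde\fa^{rs}$ and all equivariance assertions follow formally.
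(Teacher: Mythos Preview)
Your proposal is correct and follows essentially the same strategy as the paper: the paper introduces an intermediate family $X_m'$ (with defining equations $\sum_j a_j^iv_j^2/d_j=0$, i.e.\ precisely your ``$\sqrt{v_jd_j}$-cover''), shows $X_m\cong(\tilde\fa^{rs}\times_{\ars}X_m')/I_N$ via $v_i\mapsto v_i/c_i$, and then identifies $X_m'\cong\mY_m$ by matching the branch configurations $(\mathbb{P}(V_{m,a}),\{d_jv_j\})$ and $(\mathbb{P}^{N-m-1},\{H_{a,j}\})$ --- the latter step being exactly your Lagrange interpolation (the paper obtains the same linear isomorphism by citing Terasoma's \cite[Theorem~2.1.1]{T} and unwinding it as a composite $f_3\circ f_2\circ f_1$). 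Your packaging is a bit more direct in that it bypasses naming $X_m'$ and re-derives the configuration equivalence rather than quoting it; one small point to clean up is that the individual $y_{j,k}=\sqrt{t_k-a_j}$ do not live on $C_a$ (only the ratios $\sqrt{(t_k-a_j)/(t_k-a_1)}$ do), but this is harmless since only the class of $(\prod_k y_{j,k})_j$ modulo common sign matters for the $I_N$-cover structure.
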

\begin{proof}
Following \cite[\S 5]{T}, we 
consider the family 
\[X_m'|_\ars\ra\ars\]
whose fiber over $a=(a_1,\ldots,a_N)\in\ars$ is the complete intersection of $m$ quadrics in $\mathbb P(V)$
given by
\[\frac{a_1^i}{d_1}v_1^2+\cdot\cdot+\frac{a_{N}^i}{d_N}v_{N}^2=0,\ \ \ i=0,...,m-1,\]
where $d_i:=\prod_{j\neq i}(a_j-a_i)$. One can think of $X_{m}|_\ars$ as 
a \emph{twist} of $X_{m}'|_\ars$. More precisely,  
we have a natural map $$\tilde\fa^{rs}\times_{\ars} X'_{m}|_\ars\ra X_m|_\ars,\  (a,c,[v_1,...,v_N])\mapsto(a,[v_1/c_1,\ldots,v_N/c_N])$$ and it is not hard to see that 
it descends to  
a canonical $I_N$-equivariant isomorphism 
\beq\label{twist}
X_m|_{\ars}\is(\tilde\fa^{rs}\times_{\ars} X_{m}'|_\ars)/I_N.
\eeq
Here $I_N$ acts on the product via the diagonal action.
The Weyl group ${W}=S_N$ acts naturally on $X_m|_{\ars}$, $X_m'|_\ars$, 
$\tilde\fa^{rs}$, and $(\tilde\fa^{rs}\times_{\ars} X_{m}'|_\ars)/I_N$, making the projections to $\ars$ equivariant maps under the $W$-actions. Moreover, the isomorphism in \eqref{twist} is also $W$-equivariant. In \S\ref{proof of twist=tY_m} (see Proposition \ref{twist=tY_m}),
we show that 
there is an $I_N\rtimes W$-equivariant isomorphism 
\beq\label{X_m'=Y_m}
X_m'|_\ars\is\mY_m.
\eeq 
Combining \eqref{twist} with \eqref{X_m'=Y_m}
we obtain \eqref{X_m=Y_m twist}.
\end{proof}

\subsection{Branched double covers $\tilde{X}_m$ of complete intersections of quadrics}\label{quadrics}

We introduce a branched double cover of $X_m$ as follows.
Let $\widetilde{V}=V\oplus\bC$. 
For any $s\in\grs$, consider the following quadrics in $\mathbb P(\tV)$,
\beqn
\begin{gathered}
\tQ_{i}(v, \epsilon)=\langle s^iv,v\rangle_Q=0,\ i=0,\ldots,m-1\\
\tQ_{m}(v, \epsilon)=
\langle s^mv,v\rangle_Q-\epsilon^2=0.
\end{gathered} 
\eeqn
We define 
$\tX_{m,s}$ to be the complete intersection of 
$m+1$ quadrics $\tQ_{i}=0$, $i=0,\ldots,m$. 
As $s$ varies over $\grs$, we get a family $$\tilde\pi_m:\tX_m\ra\grs$$
of complete intersections of $m+1$ quadrics in $\bbP(\tV)$.
The projection $\tV=V\oplus\bC\ra V$,
$(v, \epsilon)\mapsto v$, induces a map
$p_{m}:\tX_{m}\ra X_{m}$
which is 
a branched double cover with branch locus $X_{m+1}\subset X_{m}$. 

The map $\tV=V\oplus\bC\to\tV$ given by $(v, \epsilon)\mapsto (v, -\epsilon)$ defines an involution on $\tX_m$.
We denote this involution by $\sigma$.

The group $K=SO(V,Q)$ acts naturally on both $X_m$ and $\tX_m$. The maps 
$\pi_m:X_m\ra\grs\text{ and }\tilde\pi_m:\tX_m\ra\grs$ are $K$-equivariant.
In particular, the centralizer $Z_{K}(s)$ acts on the fibers $X_{m,s}$ and $\tX_{m,s}$.

\subsection{Branched cover $\tilde{\mY}_m$ of projective spaces and $\tX_m$}\label{the case of tX}
In this subsection
we generalize Proposition \ref{prop-xy} to the  branched double cover 
$\tX_m$ introduced in \S\ref{quadrics}. 

For $a=(a_1,...,a_{N})\in\ars$ the 
equations of $\tX_{m,a}\subset\bbP^{N-m-1}(\tV)$ (recall that $\tV=V\oplus\bC$)
are given by 
\[a_1^iv_1^2+\cdot\cdot+a_{N}^iv_{N}^2=0,\ \ i=0,...,m-1,\ \ 
a_1^mv_1^2+\cdot\cdot+a_{N}^mv_{N}^2-\epsilon^2=0
.\]
Consider the map 
\beq\label{the map ts}
\tilde s:\mathbb P(\tV)\ra\mathbb P(\tV),\  
[v_1,...,v_{N},\epsilon=v_{N+1}]\mapsto [v_1^2,...,v_{N}^2,v_{N+1}^2].
\eeq
We have 
$\tilde s(\tX_{m,a})\is\mathbb P(\tV_{m,a})$,
where $\tV_{m,a}\subset \tV$ is the subspace defined by 
the equations
\beqn\label{W_a}
 a_1^iv_1+\cdot\cdot+a_{N}^iv_{N}=0,\ \ i=0,...,m-1,\ \ 
a_1^mv_1+\cdot\cdot+a_{N}^mv_{N}-v_{N+1}=0.
\eeqn
The
map 
\beqn\label{s_a}
\tilde s_{a}:\tX_{m,a}\ra\mathbb P(\tV_{m,a})
\eeqn
is a $I_{N+1}$-branched cover with branch locus $\{v_i=0\}_{i=1,...,N+1}$.
As $a$ varies over $\ars$, 
we obtain  
\beqn
\xymatrix{\tX_m|_\ars\ar[dr]\ar[rr]^{\tilde s\ }&&\mathbb P(\tV_m)\ar[dl]
\\&\ars&}
\eeqn
here $\tV_m$ is the vector subbundle 
of the trivial bundle $\tV\times\ars$
whose fiber 
over $a$ is the subspace $\tV_{m,a}$, and $\bbP(\tV_m)$ is the associated projective bundle.

We now introduce another family $\tilde{\mY}_m$ of branched covers of $\bP^{N-m-1}$. Let $sum:I_{N+1}^{N-m-1}\ra I_{N+1}$ be the summation map
and define $\bar I^{N-m-1}_{N+1}=\ker(sum)$. For any $a\in \ars$ let 
$\tilde C_a\ra\mathbb P^1$ be the $I_{N+1}$-branched cover of $\mathbb P^1$
introduced in \S\ref{curves}.
The semi-direct  product $\bar I_{N+1}^{N-m-1}\rtimes S_{N-m-1}$ acts naturally
on $(\tilde C_a)^{N-m-1}$. We define $$\tilde\mY_{m,a}=(\tilde C_a)^{N-m-1}/\bar I_{N+1}^{N-m-1}\rtimes S_{N-m-1}.$$
Similar to the case of $\mY_{m,a}$, the natural map 
\[\tilde\iota_a:\tilde\mY_{m,a}\ra (\tilde C_a)^{N-m-1}/I^{N-m-1}_{N+1}\rtimes S_{N-m-1}\is\mathbb P^{N-m-1}\] 
is an $I_{N+1}$-branched cover of $\mathbb P^{N-m-1}$ with branch locus 
$\{H_{a,i}=0\}_{i=1,...,N+1}$, here 
$H_{a,i}=0$ for $i=1,...,N$ are the hyperplanes as before (see \eqref{H_a}) and 
$H_{a,N+1}:=x_{N-m}=0$ is the hyperplane corresponding to
the ramification point $a_{N+1}=\infty$.

As $a$ varies over $\ars$, we get an $\ars$-family of $I_{N+1}$-branched 
cover of $\mathbb P^{N-m-1}$
\[
\xymatrix{\tilde\mY_m\ar[dr]\ar[rr]^{\tilde\iota\ }&&\mathbb P^{N-m-1}_\ars\ar[dl]
\\&\ars&}
\]

We will again make us of $\tilde\fa^{rs}$ of
\eqref{tars} to relate the two families 
$\widetilde{X}_m|_\ars$ and $\tilde{\mY}_m$. The Weyl group $ W$ acts naturally on $\tX_m|_\ars$ and $\tilde\mY_m$,  
making the projections to $\ars$  equivariant with respect to these $W$-actions. We let $I_N$ act on $\tilde\mY_m$ via the map 
\beq\label{embedding}
\kappa:I_N\cong Z_K(a)\hookrightarrow(\bZ/2\bZ)^{N+1}\xrightarrow{pr} I_{N+1},
\eeq
where the first arrow  is given by $(\zeta_1,\ldots,\zeta_N)\mapsto(\zeta_1,\ldots,\zeta_N,0)$. 
We also let $W$ act on $I_{N+1}$ by permuting the first $N$ 
coordinates and we use this convention to form the semi-direct product $I_{N+1}\rtimes W$.

\begin{proposition}
There is an 
$I_{N+1}\rtimes W$-equivaraint 
isomorphism 
\beq\label{tX_m=tY_m}
\tX_m|_\ars\is\tilde\mY_m^t:=(\tilde\mY_m\times_\ars\tilde\fa^{rs})/I_N,
\eeq
where ${W}$ (resp. $I_{N+1}$) acts on $\tilde\mY_m^t$ by the diagonal action (resp. 
on the first factor).
\end{proposition}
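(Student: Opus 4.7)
The plan is to follow the blueprint of Proposition \ref{prop-xy} as closely as possible, replacing the role of $X_m$ by its branched double cover $\tX_m$ and the role of $\mY_m$ by $\tilde\mY_m$. First I would introduce a ``twisted'' family $\tX_m'|_\ars\ra\ars$ whose fiber over $a=(a_1,\ldots,a_N)\in\ars$ is the complete intersection in $\mathbb P(\tV)$ cut out by
\[
\frac{a_1^i}{d_1}v_1^2+\cdots+\frac{a_N^i}{d_N}v_N^2=0,\quad i=0,\ldots,m-1,\qquad \frac{a_1^m}{d_1}v_1^2+\cdots+\frac{a_N^m}{d_N}v_N^2-\epsilon^2=0,
\]
with $d_i=\prod_{j\neq i}(a_j-a_i)$. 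Note that the last coordinate $\epsilon$ is \emph{not} rescaled in the twist, reflecting the fact that the extra ramification point of the associated branched cover is at infinity.

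Next I would produce the twist isomorphism analogous to \eqref{twist} in Proposition \ref{prop-xy}. The natural map
\[
\tilde\fa^{rs}\times_\ars\tX_m'|_\ars\ra\tX_m|_\ars,\qquad \bigl(a,c,[v_1,\ldots,v_N,\epsilon]\bigr)\longmapsto\bigl(a,[v_1/c_1,\ldots,v_N/c_N,\epsilon]\bigr),
\]
is easily seen to descend to a canonical $I_N$-equivariant isomorphism
\[
\tX_m|_\ars\is\bigl(\tilde\fa^{rs}\times_\ars\tX_m'|_\ars\bigr)/I_N,
\]
where $I_N$ acts diagonally. This isomorphism is also $W$-equivariant because both the twist data $c_i^2=d_i$ and the quadric equations transform compatibly under the symmetric group action on coordinates.

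The remaining, and main, step is to exhibit an $I_{N+1}\rtimes W$-equivariant isomorphism $\tX_m'|_\ars\is\tilde\mY_m$. I would defer the detailed construction to the analog of \S\ref{proof of twist=tY_m}, but the strategy is to adapt the relative Terasoma-type construction used to prove \eqref{X_m'=Y_m}: one builds the map by sending a point of $\tilde\mY_m$ (a symmetric collection of $N-m-1$ points on $\tilde C_a$) to the corresponding fiber of the branched cover $\tilde s_a:\tX_{m,a}'\ra\mathbb P(\tV_{m,a})$ over the associated point of $\mathbb P^{N-m-1}$, using the fact that both sides are $I_{N+1}$-branched covers of $\mathbb P^{N-m-1}_\ars$ with the same branch divisor (the hyperplane $H_{a,N+1}=\{x_{N-m}=0\}$ on the $\tilde\mY_m$ side corresponds precisely to the hyperplane $\{v_{N+1}=\epsilon=0\}$ on the $\tX_m'|_\ars$ side, matching the ramification at $a_{N+1}=\infty$). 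Combining this identification with the twist isomorphism above yields \eqref{tX_m=tY_m}.

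The hardest point is verifying the last isomorphism: although formally parallel to \eqref{X_m'=Y_m}, one must track the extra ramification point at infinity, check that the $I_{N+1}$-action on $\tilde\mY_m$ (via all $N+1$ sign flips modulo the diagonal) is intertwined with the $I_{N+1}$-action on $\tX_m'|_\ars$ (sign flips of $v_1,\ldots,v_N,\epsilon$ modulo the diagonal), and confirm compatibility with the embedding $\kappa:I_N\hookrightarrow I_{N+1}$ of \eqref{embedding} so that the quotient by $I_N$ on the twisted product makes sense globally. All of this is essentially bookkeeping once the map is correctly set up, but it is the only substantive content beyond what has been done for $X_m$.
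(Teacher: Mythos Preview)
Your proposal is correct and follows essentially the same approach as the paper: introduce the twisted family $\tX_m'|_\ars$ with exactly those equations, establish the twist isomorphism $\tX_m|_\ars\is(\tilde\fa^{rs}\times_\ars\tX_m'|_\ars)/I_N$ via the coordinate rescaling $v_i\mapsto v_i/c_i$ (leaving $\epsilon$ fixed), and then defer the $I_{N+1}\rtimes W$-equivariant isomorphism $\tX_m'|_\ars\is\tilde\mY_m$ to the analogue of \S\ref{proof of twist=tY_m}. The only minor divergence is in your sketch of that deferred step: the paper argues via function fields and normalizations, reducing to a matching of hyperplane configurations $(\mathbb P(\tV_{m,a}'),\{v_i\})\cong(\mathbb P^{N-m-1},\{H_{a,i}\})$ rather than building a point-to-point map directly, but the underlying content (same $I_{N+1}$-branched cover of $\mathbb P^{N-m-1}$ with matching branch divisors, including the one at infinity) is identical.
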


\begin{proof}
Let us consider the following 
\emph{twist} of $\tX_m|_\ars$:
\[\tX_m'|_\ars\ra\ars\]
whose fiber over $a=(a_1,\ldots,a_N)$ is the complete intersection of quadrics 
given by 
\[\frac{a_1^i}{d_1}v_1^2+\cdot\cdot+\frac{a_{N}^i}{d_N}v_{N}^2=0,\ \ \ i=0,...,m-1,
\ \ 
\frac{a_1^m}{d_1}v_1^2+\cdot\cdot+\frac{a_{N}^m}{d_N}v_{N}^2-\epsilon^2=0
\]
where $d_i$ is defined as before, i.e., $d_i=\prod_{j\neq i}(a_j-a_i)$. Similar to the case of $X_m$, we have a canonical isomorphism 
\beq\label{twist tX_m}
\tX_m|_{\ars}\is(\tilde\fa^{rs}\times_\ars\tX_m'|_\ars)/I_N.
\eeq
 Here the $I_N$-action on $\tX_m'|_\ars$ is defined as the composition of 
$I_N\ra I_{N+1}$ in \eqref{embedding} with
the natural action of $I_{N+1}$ on $\tX_m$. The Weyl group $ W$ acts naturally on $\tX_m$, $\tX_m'$ and $\tilde\mY_m$,  
making the projections to $\ars$   equivariant maps under the $W$-actions. Thus we obtain 
$I_{N+1}\rtimes W$-actions 
on $\tX_m$, $\tX_m'$ and $\tilde\mY_m$ and the projections to
$\ars$ are $I_{N+1}\rtimes W$-equivariant.   Moreover, the isomorphism in \eqref{twist tX_m} is $I_{N+1}\rtimes W$-equivariant.
In \S\ref{proof of twist=tY_m} (see Proposition \ref{twist=tY_m}), we show that there is an $I_{N+1}\rtimes W$-equivariant isomorphism $\tX_{m}'|_{\ars}\is\tilde\mY_{m}$. Combining this with \eqref{twist tX_m} we obtain \eqref{tX_m=tY_m}.
\end{proof}

\subsection{The families $\tX_m'$ and $\tilde\mY_m$}\label{proof of twist=tY_m} In this subsection we state and prove  the following proposition which was used in the previous subsections.
\begin{prop}\label{twist=tY_m}
We have an $I_{N+1}\rtimes W$-equivariant isomorphism $\tX_{m}'|_{\ars}\is\tilde\mY_{m}$.
In particular, it induces an $I_N\rtimes W$-equivariant isomorphism on the 
quotient \[X_m'|_\ars\is\tX_m'|_\ars/(\bZ/2\bZ)\is\tilde\mY_m/(\bZ/2\bZ)\is\mY_m.\]
Here $\bZ/2\bZ$ acts on $\tX_m'$ and $\tilde\mY_m$ via the map
$\bZ/2\bZ\ra I_{N+1}$ given by $1\mapsto {(0,...,0,1)}$. 
\end{prop}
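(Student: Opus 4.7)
The plan is to construct an explicit isomorphism fiberwise over each $a \in \ars$, following the argument of Terasoma \cite{T} in the absolute case, and then to verify that these fiberwise bijections assemble into an algebraic isomorphism over $\ars$ which is equivariant for the full $I_{N+1} \rtimes W$ action. The starting point is the classical link between complete intersections of diagonal quadrics and Lagrange interpolation.

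For the pointwise construction, fix $a = (a_1, \ldots, a_N) \in \ars$, set $a_{N+1} = \infty$, $f(x) = \prod_{i=1}^N (x - a_i)$, and $d_i = \prod_{j \neq i}(a_j - a_i)$. Given $[v_1, \ldots, v_N, \epsilon] \in \tX_{m,a}'$, form the rational function
\[
F(x) = \sum_{i=1}^{N} \frac{v_i^2/d_i}{x - a_i} = \frac{P(x)}{f(x)}.
\]
The defining equations of $\tX_{m,a}'$ are equivalent, via the expansion of $F$ at infinity, to the statements that $F(x) = O(x^{-(m+1)})$ and that the leading coefficient of the numerator is $\pm\epsilon^2$. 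Hence $P$ has degree at most $N - m - 1$, its leading coefficient is $\pm \epsilon^2$, and by residues $P(a_i) = \pm v_i^2$. Writing $P(x) = \pm\epsilon^2 \prod_{j=1}^{N-m-1}(x - t_j)$, the unordered tuple $(t_1, \ldots, t_{N-m-1})$ together with the choice of compatible square roots $v_i = \prod_j \sqrt{t_j - a_i}$ (with an analogous convention for $\epsilon$ via a local parameter at $\infty$) records exactly an unordered $(N-m-1)$-tuple of points on $\tilde C_a$ modulo the diagonal $\bZ/2$, i.e., a point of $\tilde\mY_{m,a}$. Running this correspondence backwards gives an inverse: starting from $(N-m-1)$ points on $\tilde C_a$, set $v_i := \prod_j \sqrt{t_j - a_i}$; the verification that these satisfy the quadric equations reduces to the classical identity $\sum_i a_i^k/d_i = 0$ for $k < N-1$. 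Algebraic dependence on $a$ is transparent from the formulas, giving the map over $\ars$.

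The equivariance checks are the main technical obstacle. The $I_{N+1}$-action on $\tX_m'$ flips signs among $(v_1, \ldots, v_N, \epsilon)$ modulo the diagonal, while on $\tilde\mY_m$ it changes the choices of sheets of $\tilde C_a$ over the points $t_j$; the match is immediate from $v_i = \prod_j \sqrt{t_j - a_i}$. The $W = S_N$-action simultaneously permutes the $a_i$ (both as labels of ramification points of $\tilde C_a$ and as coefficients of the quadrics), and since the construction is built entirely out of $W$-symmetric data $\{a_i\}$ paired with $\{v_i\}$, equivariance follows from inspection of the formula. Care must be taken that the $W$-action on $\tilde\mY_m$, which acts trivially on the $\infty$-coordinate of $I_{N+1}$, is the one used to form $I_{N+1} \rtimes W$ above.

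For the second statement, observe that the subgroup $\bZ/2\bZ \hookrightarrow I_{N+1}$ generated by $(0, \ldots, 0, 1)$ acts on $\tX_m'$ by $\epsilon \mapsto -\epsilon$, whose quotient is $X_m'$ (whose equations are obtained from those of $\tX_m'$ by dropping the last one involving $\epsilon$); on the other side, this element acts on $\tilde\mY_m$ by changing the sheet of $\tilde C_a$ over the single ramification point at $a_{N+1} = \infty$, with quotient $\mY_m = C^{N-m-1}/\bar I_N^{N-m-1} \rtimes S_{N-m-1}$. Hence taking $\bZ/2\bZ$-quotients of the isomorphism $\tX_m'|_\ars \cong \tilde\mY_m$ yields the claimed $I_N \rtimes W$-equivariant isomorphism $X_m'|_\ars \cong \mY_m$.
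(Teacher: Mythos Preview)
Your approach and the paper's are both elaborations of Terasoma's argument, but you work pointwise with partial fractions while the paper proceeds via normalization in an explicit field extension. Concretely, the paper observes that $\tX_m'|_{\ars}$ is the normalization of $\mathbb P(\tV_m')$ in $\bC(\mathbb P(\tV_m'))((v_i/v_{N+1})^{1/2})$ and that $\tilde\mY_m$ is the normalization of $\mathbb P^{N-m-1}_{\ars}$ in $\bC(\mathbb P^{N-m-1}_{\ars})((H_{\eta,i}/H_{\eta,N+1})^{1/2})$; it then writes down an explicit linear isomorphism $\mathbb P(\tV_{m,a}')\cong\mathbb P^{N-m-1}$ (built from elementary symmetric functions of the $a_i$) carrying the hyperplanes $\{v_i=0\}$ to the $\{H_{a,i}=0\}$, and the additional hyperplane $\{v_{N+1}=0\}$ to $\{H_{a,N+1}=x_{N-m}=0\}$. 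Uniqueness of normalization then yields the isomorphism on the nose, over all of $\ars$, with the equivariance inherited from the identification of field extensions. Your map of bases $[v_i^2,\epsilon^2]\mapsto P$ is essentially this same linear isomorphism written in Lagrange-interpolation coordinates.

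There are, however, two genuine gaps in your execution. First, your description of a point of $\tilde\mY_{m,a}$ as ``an unordered $(N-m-1)$-tuple of points on $\tilde C_a$ modulo the diagonal $\bZ/2$'' is not correct: by definition $\tilde\mY_{m,a}=(\tilde C_a)^{N-m-1}/\bar I_{N+1}^{N-m-1}\rtimes S_{N-m-1}$, and $\bar I_{N+1}^{N-m-1}$ is far larger than a diagonal $\bZ/2$. What is true (and what you need) is that the quotient map $(\tilde C_a)^{N-m-1}\to\tilde\mY_{m,a}$ factors through the summation map $I_{N+1}^{N-m-1}\to I_{N+1}$, so that only the \emph{products} $\prod_j(t_j-a_i)^{1/2}$ survive; this is exactly what your $v_i$ record, but you should say so and check that this really identifies $\tilde\mY_{m,a}\to\mathbb P^{N-m-1}$ as the $I_{N+1}$-cover branched along the $\{H_{a,i}\}$. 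Second, your construction is only given on the dense open where $\epsilon\neq 0$ and the $t_j$ are distinct and finite; you assert ``algebraic dependence on $a$ is transparent'' but do not explain why the map extends over the degeneracy loci. The paper's normalization argument handles both issues at once: once the hyperplane configurations match, the two normal varieties over $\mathbb P^{N-m-1}_{\ars}$ with the same function field are canonically isomorphic everywhere, and the $I_{N+1}\rtimes W$-equivariance is read off from the action on generators $(v_i/v_{N+1})^{1/2}\leftrightarrow(H_{\eta,i}/H_{\eta,N+1})^{1/2}$.
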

We follow closely the argument in \cite[\S 2]{T}.
We begin by introducing some auxiliary spaces and maps.
Let $\tV_m'\subset \tV\times\ars$ be the vector sub-bundle whose fiber 
over $a\in\ars$ is the subspace 
$\tV_{m,a}'\subset \tV$
defined by the equations
\beq\label{eqns for tV'}
 \frac{a_1^i}{d_1}v_1+\cdot\cdot+\frac{a_{N}^i}{d_N}v_{N}=0,\ \ i=0,...,m-1,\ \ 
 \frac{a_1^m}{d_1}v_1+\cdot\cdot+\frac{a_{N}^m}{d_N}v_{N}-v_{N+1}=0.
\eeq
The map $\tilde{s}:\mathbb P(\tV)\times\ars\ra\mathbb P(\tV)\times \ars$ (see \eqref{the map ts})
maps $\tX_m'|_\ars$ to $\mathbb P(\tV_m')$ and the resulting map 
\[\tilde s':\tX_m'|_\ars\ra\mathbb P(\tV_m')\]
is an $I_{N+1}$-branched cover with branch locus $\{v_i=0,\ i=1,...,N+1\}$.

Let $\bC(\mathbb P(\tV_m'))$ be the function field of $\mathbb P(\tV_m')$.
Then the function field of $\tX_m'|_\ars$ is given by the following field extension 
$$F:=\bC(\mathbb P(\tV_m'))((\frac{v_i}{v_{N+1}})^{1/2}_{i=1,...,N})\supset 
\bC(\mathbb P(\tV_m')).$$ Since $\tX_m'|_\ars$ is smooth and $\tilde s'$ is 
finite, it follows that 
\beqn
\text{$\tX_m'|_\ars$ is the normalization of $\mathbb P(\tV_m')$
in $F$.}\footnote{Recall for any irreducible variety $X$ and $K$ a finite extension of the 
function field $\bC(X)$, there exists a unique normal variety $Y$ and a finite morphism 
$f:Y\ra X$ such that the induced map $\bC(X)\ra\bC(Y)=K$ is the given field extension.
We call $Y$ the \emph{normalization} of $X$ in $K$.}
\eeqn  The group $I_{N+1}$ acts on $F$ by  
$\displaystyle{\zeta:\,v_i^{1/2}\mapsto(-1)^{\zeta_i}\,v_i^{1/2},\ \zeta=(\zeta_1,...,\zeta_{N+1})\in I_{N+1}}$ and 
$W$ acts on $F$ by  $\displaystyle{w:\  (\frac{v_i}{v_{N+1}})^{1/2}\mapsto(\frac{v_{w(i)}}{v_{N+1}})^{1/2}}$.

Similarly, let 
$k_\eta$ be the function field of $\ars$ and let 
$\eta=(a_1,...,a_N)\in\ars(k_\eta)$ be the corresponding generic point.
Then the function field of $\tilde\mY_m$ is given by the following field extension 
$$F'=\bC(\mathbb P^{N-m-1}_\ars)((\frac{H_{\eta,i}}{H_{\eta,N+1}})^{1/2}_{i=1,...,N})\supset\bC(\mathbb P^{N-m-1}_\ars).$$ Here,
$H_{\eta,i},\, i=1,...,N$ are the hyperplanes associated to $\eta\in\ars$ in \eqref{H_a}, $H_{\eta,N+1}=x_{N-m}$, and 
$\frac{H_{\eta,i}}{H_{\eta,N+1}}$ are rational functions on 
$\mathbb P^{N-m-1}_\eta$, regraded as elements in 
$\bC(\mathbb P^{N-m-1}_\eta)=\bC(\mathbb P^{N-m-1}_\ars)$.
Since $\tilde \mY_m$ is smooth and $\tilde\iota:\tilde\mY_m\ra\mathbb P_\ars^{N-m-1}$ is finite, it follows that 
\beqn
\text{$\tilde \mY_m$ is the normalization of $
\mathbb P^{N-m-1}_\ars$ in $F'$.}
\eeqn
The group $I_{N+1}$ acts on $F'$ by 
$\displaystyle{\zeta:\,H_{\eta,i}^{1/2}\mapsto(-1)^{\zeta_i}\,H_{\eta,i}^{1/2},\ \zeta=(\zeta_1,...,\zeta_{N+1})\in I_{N+1}}$ and 
$ W$ acts on $F$ by  $\displaystyle{w:\,(\frac{H_{\eta,i}}{H_{\eta,N+1}})^{1/2}\mapsto(\frac{H_{\eta,w(i)}}{H_{\eta,N+1}})^{1/2}}$.

By the discussion above, to prove Proposition \ref{twist=tY_m},
it is enough to prove the following statement. 
\beq\label{configurations}
\begin{gathered}
\text{The two $\ars$-families of configurations}\\ 
(\mathbb P(\tV'_{m,a}),\{v_i\}_{i=1,...N+1})\ \text{and}\ \ 
(\mathbb P^{N-m-1}_{\ars},\{H_{a,i}\}_{i=1,...
,N+1})\text{ are 
equivalent. }
\end{gathered}
\eeq
That is, 
there is an isomorphism (or trivialization) of
vector bundles $\phi:\bC^{N-m}\times\ars\is \tV_m'$ over $\ars$
such that for any $k$-point $a\in\ars(k)$, $k$ a field,
the induced map on the dual fibers $\phi^*_a:(\tV_{m,a}')^*\is k^{N-m}$
satisfies $\phi^*_a(v_i)=H_{a,i}$ for $i=1,...,N+1$.

To prove \eqref{configurations}, we need to construct, 
for each $S$-point $a\in\ars(S)$, 
a functorial isomorphism
$\phi_a:\bC^{N-m}\times S\is \tV_{m,a}'$ satisfying the desired property. 
For notational simplicity we construct such isomorphisms on the level of $k$-points. The argument for general $S$-points is the same.

Consider the following map
\[\psi_a:\tV\otimes k\stackrel{pr}\ra V\otimes k\stackrel{\times d}\is V\otimes k\]
where $pr:\tV\otimes k=(V\otimes k)\oplus k\ra V\otimes k$
is the projection map and the second isomorphism is given by multiplying
the diagonal matrix $d=\on{diag}(d_1^{-1},...,d_N^{-1})\in GL(V\otimes k)$ 
(recall for $a=(a_1,...,a_N)\in\ars$, $d_i=\prod_{j\neq i} (a_j-a_i)$). One can check that $\psi_a$ maps $\tV_{m,a}'$ isomorphically onto $V_{m,a}$
(see \eqref{eqns for tV'} and \eqref{V_a} for the definitions of $\tV_{m,a}'$ and $V_{m,a}$, respectively) and the resulting isomorphism 
$\psi_a:\tV_{m,a}'\is V_{m,a}$ 
satisfies 
$$\text{$\psi_a^*(d_i\cdot v_i)=v_i$ for $i=1,...,N$ and 
$\psi_a^*(H_\infty)=v_{N+1}$,}$$ where $H_\infty:=a_1^mv_1+\cdot\cdot\cdot+a_N^mv_N$. Thus we are reduced to show that 
$$\text{$(\bbP(V_{m,a}), \{d_i\cdot v_i\}_{i=1,...,N}\cup H_\infty)$ and 
$(\mathbb P(k^{N-m}), \{H_{a,i}\}_{i=1,...N+1})$ are equivalent,}$$ that is,
there is an isomorphism \[\gamma_a:k^{N-m}\is V_{m,a}\]
such that $\gamma_a^*(d_i\cdot v_i)=H_{a,i}$, $i=1,\ldots,N$ and $\gamma_a^*(H_\infty)=H_{a,N+1}$.

Consider the basis $u_i=(a_1^{i},...,a_N^{i})$, $i=0,...,N-1$
of $V\otimes k$. Then the isomorphism $V\otimes k\is (V\otimes k)^*$, given by the paring 
$\langle(v_i),(w_i)\rangle=\sum v_iw_i$, induces the following isomorphism  
\[f_1:V_{m,a}^*\is V\otimes k/k\langle u_0,...,u_{m-1}\rangle\is k\langle u_{m},...,u_{N-1}\rangle.\]
Let $s_{i}$ be the elementary symmetric polynomial in $a_1,...,a_N$
of degree $i$ and let
$A=(a_{ij})\in GL_{N-m}(k)$ be the matrix with 
entries $a_{ij}=(-1)^{i-1}s_{j-i}$ if $j\geq i$ and $a_{ij}=0$ otherwise.
Consider the following isomorphism
\[f:V_{m,a}^*\stackrel{f_1}\is k\langle u_{m},...,u_{N-1}\rangle\stackrel{f_2}\is k^{N-m}\stackrel{f_3}\is k^{N-m},\]
here $f_2:k\langle u_{m},...,u_{N-1}\rangle\is k^{N-m}$ is 
the isomorphism given by $u_{N-i}\mapsto(-1)^{i-1}x_{i}$\footnote{Here 
we regard $x_i$ as the $i$-th coordinate vector of $k^{N-m}$.} and 
$f_3$ is the isomorphism given by right multiplication by $A^{-1}$. 
We claim that the dual $$\gamma_a:=f^*:k^{N-m}\is V_{m,a}$$
is the desired isomorphism, i.e., we have 
$f(d_i\cdot v_i)=H_{a,i}$ for $i=1,...,N$, and $f(H_\infty)=H_{a,N+1}$.
Note that  the configuration $(\bbP(V_{m,a}), \{d_i\cdot v_i\}_{i=1,...,N})$
(resp. $(\mathbb P(k^{N-m}), \{H_{a,i}\}_{i=1,...N})$)
is equal to the configuration $(P, H_1,...,H_N)$ (resp. $(P',H_1',...,H_N')$)
in \cite[\S 2.1]{T}. Moreover, the map $f$ is the one used in  \cite[Proof of Theorem 2.1.1]{T} to show that
the above configurations are equivalent. Thus   
according to \cite[Proof of Theorem 2.1.1]{T} we have
\[f(d_i\cdot v_i)=H_{a,i}\ \ \text{for}\ \ i=1,...,N.\] 
So it remains to show that $f(H_\infty)=H_{a,N+1}$.
For this we 
observe that $f_1(H_\infty)=u_m$. Hence 
\beqn
f(H_\infty)=f_3\circ f_2\circ f_1(H_\infty)=f_3\circ f_2(u_m)
=(-1)^{N-m-1}\cdot f_3(x_{N-m})
\eeqn
\beqn
=(-1)^{N-m-1}\cdot x_{N-m}\cdot A
=x_{N-m}=H_{a,N+1}.
\eeqn
This proves \eqref{configurations}.

The proof of Proposition \ref{twist=tY_m} is complete.

\section{Monodromy of families of Hessenberg varieties}\label{sec-monodromy of Hess}
In this section we study the monodromy 
representation of $\pi_1^K(\grs,a)=Z_K(a)\rtimes B_N$ on the primitive cohomology of 
complete intersections of quadrics $X_m$ (and on the primitive cohomology of their branched double covers $\tX_m$). By Theorem \ref{H=X} this gives us a 
complete description of the monodromy representations of $\pi_1^K(\Lg_1^{rs})$ associated to the families of Hessenberg varieties $\Hess_n^{O,\p}$ and 
$\Hess_n^{E,\p}$.

To state the result, let us recall, from~\S\ref{curves}, the monodromy representations $\rho_{C_\chi}:P_{N}\ra Sp(H^1(C_{a,\chi},\bC))\is Sp(2i-2)$  and  $\rho_{\tilde C_\chi}:P_{N}\ra Sp(H^1(\tilde C_{a,\chi},\bC))\is Sp(2i-2)$ where  $i=\frac {|\chi|} 2$. Recall further that, by~\eqref{The monodromy representation is Zariski dense}, these representations are irreducible with Zariski dense image. Let us consider the irreducible representation of $Sp(2i-2)$ associated to the fundamental weight $\omega_j$. Composing $\rho_{C_\chi}$  and  $\rho_{\tilde C_\chi}$ with this fundamental representation we obtain irreducible representations  $\on{P}^j_\chi$ and $\tilde{\on{P}}^j_\chi$ of the pure braid group $P_N$. 

For a character $\chi$ of an abelian group we write $V_\chi$ for the corresponding one dimensional representation. Recall that the group $Z_K(a)$ can be naturally identified with $I_N$ as explained in~\eqref{natural map}. We also relate the characters of $I_{N}$ and $I_{N+1}$ using the map $\kappa$ defined in \eqref{embedding}. 
From these considerations we conclude that 
\beq
\label{characters}
Z_K(a)^\vee=I_N^\vee \qquad \text{and we have a map}  \qquad    \check{\kappa}: I_{N+1}^\vee    \ra     I_N^\vee\,.
\eeq
In particular, characters of $I_N$ and $I_{N+1}$ can be regarded as characters of $Z_K(a)$.
To state the main theorems of this section we define two  $Z_K(a)\rtimes P_N$-representations as follows:
\[E_{ij}^{N}\is
\bigoplus_{\chi\in I_{N}^\vee,\,|\chi|=2i}\on{P}^j_\chi\otimes V_\chi\qquad\text{and}\qquad \tE_{ij}^N=
\bigoplus_{\substack{\chi\in I_{N+1}^\vee,\,|\chi|=2i,\\N+1\in\on{supp}\chi}}\tilde{\on{P}}^j_\chi\otimes V_\chi,\]
where the $I_N$ acts on $\tE_{ij}^N$ via the map $\check{\kappa}:I_{N+1}^\vee    \ra     I_N^\vee$ of~\eqref{characters}, and  $P_N$ acts on $V_\chi$ via the map $\rho:P_N\to I_N$ of \eqref{the map rho}.
Lemmas~\ref{E_ij} and~\ref{tE_ij}  show that the $Z_K(a)\rtimes P_N$ actions on $E_{ij}^{N}$ and $\tE_{ij}^N$ extend naturally to  $Z_K(a)\rtimes B_N$-actions.

The main results of this section are the following.
\begin{thm}\label{irred}

For $1\leq m\leq N-1$, the monodromy representation of $\pi_1^K(\grs,a)$ on $P(X_m):=H^{N-m-1}_{prim}(X_{m,a},\bC)$ 
decomposes into irreducible representations in the following manner:
\[P(X_m)\is
\bigoplus_{i}^{}\bigoplus_{\substack{j\equiv N-m-1\on{mod}2\\ j\in [0,l]}}E_{ij}^N,\]
with $N-m+1\leq 2i\leq N$,   
$l=\min\{N-m-1,-N+m+2i-1\}$.
\end{thm}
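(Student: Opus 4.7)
The plan is threefold: reduce the problem from $\grs$ to $\ars$, apply the (relativized) Terasoma decomposition to the geometrically identified family $\mY_m$, and then promote the resulting $P_N$-decomposition to a $B_N$-decomposition via the Weyl group action. Since $\pi_1^K(\grs,a)\cong Z_K(a)\rtimes B_N$ contains $Z_K(a)\rtimes P_N\cong \pi_1^K(\ars,a)$ as the finite-index subgroup arising from $\ars\hookrightarrow \grs$, it suffices to first describe $P(X_{m,a})$ as a $Z_K(a)\rtimes P_N$-representation and then verify compatibility with the $W=S_N$-action.

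By Proposition~\ref{prop-xy}, there is an $I_N\rtimes W$-equivariant isomorphism $X_m|_\ars\cong \mY_m^t=(\mY_m\times_\ars\tilde\fa^{rs})/I_N$. Passing to cohomology of fibers, the twist by the $I_N$-torsor $\tilde\fa^{rs}\to\ars$ (whose monodromy is the homomorphism $\rho:P_N\to I_N$ from~\eqref{the map rho}) identifies the $\chi$-isotypic piece of $P(X_{m,a})$ with $P(\mY_{m,a})_\chi\otimes V_\chi$, where $V_\chi$ is the one-dimensional representation on which $Z_K(a)=I_N$ acts by $\chi$ and $P_N$ acts by $\chi\circ\rho$. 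Now Terasoma's theorem \cite{T} computes the primitive middle cohomology of the $I_N$-branched cover $\mY_{m,a}\to\bbP^{N-m-1}$: for a character $\chi$ with $|\chi|=2i$, the $\chi$-isotypic piece is nonzero precisely when $2i\in[N-m+1,N]$, and when nonzero it is realized in terms of the cohomology of the hyperelliptic curve $C_{a,\chi}$, producing the irreducible summands $\on{P}^j_\chi$ indexed by the allowed $j\equiv N-m-1\pmod 2$ with $0\leq j\leq l=\min\{N-m-1,\,2i-N+m-1\}$. Summing over $\chi$ with $|\chi|=2i$ gives exactly $E_{ij}^N$ as a $Z_K(a)\rtimes P_N$-representation.

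To finish, the $W$-action on $\mY_m$ and $\tilde\fa^{rs}$ permutes $I_N^\vee$ while preserving $|\chi|$, which is precisely the data needed for each $E_{ij}^N$ to carry a compatible $B_N$-action (Lemma~\ref{E_ij}), upgrading the decomposition to one of $Z_K(a)\rtimes B_N$-representations. Irreducibility of each $E_{ij}^N$ follows from the Zariski density statement~\eqref{The monodromy representation is Zariski dense} together with the irreducibility of the fundamental representations of $Sp$. The main obstacle is the combinatorial bookkeeping in the application of Terasoma's theorem --- in particular, verifying the parity constraint $j\equiv N-m-1\pmod 2$ and the two bounds composing $l$, the first of which reflects the cohomological degree $N-m-1$ and the second of which reflects $\dim H^1(C_{a,\chi})=2i-2$ through the Lefschetz structure on the branched cover $\mY_{m,a}$.
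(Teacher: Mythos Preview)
Your proposal is correct and follows essentially the same route as the paper: Proposition~\ref{prop-xy} yields $P(X_m)\cong\bigoplus_\chi P(\mY_m)_\chi\otimes V_\chi$ as $I_N\rtimes B_N$-representations, Terasoma's theorem identifies $P(\mY_m)_\chi\cong\wedge^{N-m-1}H^1(C_{a,\chi},\bC)$, the wedge power decomposes into fundamental $Sp(2i-2)$-representations giving the $\on{P}^j_\chi$, and Lemma~\ref{E_ij} supplies irreducibility. The only point you might make more explicit is that the intermediate step is precisely the exterior-power isomorphism $P(\mY_m)_\chi\cong\wedge^{N-m-1}H^1(C_{a,\chi},\bC)$, from which the parity and the two bounds on $j$ are read off via the standard $Sp$-decomposition of $\wedge^k(\bC^{2i-2})$ rather than via a Lefschetz argument on $\mY_{m,a}$.
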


To state the second main result, we set
$$  
P(\tX_m):=H^{N-m-1}_{prim}(\tX_{m,a},\bC).
$$ 
Recall that there is an involution action $\sigma$ on $\tilde X_{m}$ 
and the projection map $p_m:\tilde X_m\ra X_m$ is a branched double cover
with Galois group $\langle\sigma\rangle\is\bZ/2\bZ$
(see  \S\ref{quadrics}). Then $P(\tX_m)= P(\tX_m)^{\sigma=id}\oplus P(\tX_m)^{\sigma=-id}$ and we have
 $P(\tX_m)^{\sigma=id}=P(X_m)$. The next theorem describes $P(\tX_m)^{\sigma=-id}$. 
 
\begin{thm}\label{irred tE}

For $1\leq m\leq N-1$, 
the monodromy representation 
of $\pi_1^K(\grs,a)$ on 
$P(\tX_m)^{\sigma=-id}$ decomposes into irreducible representations in the following manner:
\[P(\tX_m)^{\sigma=-id}\is
\bigoplus_{i}^{}\bigoplus_{\substack{j\equiv N-m-1\on{mod}2\\ j\in[0,l]}}^{}\tE_{ij}^N,\]
with $N-m+1\leq 2i\leq N+1$, 
$l=\min\{N-m-1,-N+m+2i-1\}$.
\end{thm}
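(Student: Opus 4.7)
The proof will parallel that of Theorem~\ref{irred}, so I will focus on the modifications needed to handle the branched double cover and the $\sigma$-eigenspace decomposition. First, using the isomorphism \eqref{tX_m=tY_m}, $\tX_m|_{\ars}\cong\tilde\mY_m^t=(\tilde\mY_m\times_{\ars}\tilde\fa^{rs})/I_N$, I would translate the computation of the monodromy representation on $P(\tX_m)$ into a computation on the branched cover $\tilde\mY_m\to\bbP^{N-m-1}_{\ars}$, twisted by the $I_N$-torsor $\tilde\fa^{rs}$. The involution $\sigma$ on $\tX_m$, given by $(v,\epsilon)\mapsto(v,-\epsilon)$, corresponds under the map $\tilde s$ of \eqref{the map ts} to the generator $(0,\dots,0,1)\in(\bZ/2\bZ)^{N+1}$, which descends to the element $\xi_{N+1}\in I_{N+1}$. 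Hence in the $I_{N+1}$-isotypic decomposition of $H^{*}(\tilde\mY_{m,a})$, the $\sigma=-\id$ eigenspace picks out precisely the $\chi$-isotypic components with $\chi(\xi_{N+1})=-1$, i.e., those $\chi\in I_{N+1}^{\vee}$ with $N+1\in\on{supp}(\chi)$, matching the support condition in the definition of $\tE_{ij}^N$.

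Next I would compute $H^{N-m-1}_{prim}(\tilde\mY_{m,a})$ using the Galois cover structure $\tilde\mY_{m,a}=(\tC_a)^{N-m-1}/(\bar I_{N+1}^{N-m-1}\rtimes S_{N-m-1})$. Since $\tilde\iota_a$ is finite, primitive cohomology in the middle degree is computed by Künneth as the appropriate $S_{N-m-1}$-sign-invariants of $H^{1}(\tC_a)^{\otimes(N-m-1)}$, modulo contributions from the pullback of $H^{*}(\bbP^{N-m-1})$. Using the $I_{N+1}$-decomposition $H^{1}(\tC_a)=\bigoplus_{\chi}H^{1}(\tC_{a,\chi},\bC)$, taking $\bar I_{N+1}^{N-m-1}$-invariants forces the tuple of characters in a Künneth summand to be constant, equal to a single $\chi$. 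The appropriate $S_{N-m-1}$-invariants of $H^{1}(\tC_{a,\chi})^{\otimes(N-m-1)}$ then recover, after subtracting hyperplane classes, the fundamental weight representation $\omega_{j}$ of $\on{Sp}(H^{1}(\tC_{a,\chi},\bC))$, whose monodromy is $\tilde{\on{P}}_{\chi}^{j}$. This is precisely the content of the relative Terasoma theorem from Section~\ref{generalization of T} and yields the parity constraint $j\equiv N-m-1\pmod 2$ together with the ranges on $j$ and $i$; the upper bound $2i\leq N+1$ (vs.\ $2i\leq N$ for $X_m$) reflects the fact that the character $\chi_0$ of full support in $I_{N+1}^{\vee}$ now contributes.

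Finally, to obtain the full $Z_K(a)\rtimes B_N$-action, I would use the twist by the $I_N$-torsor $\tilde\fa^{rs}$: this twist introduces the factor $V_\chi$ in $\tE_{ij}^{N}$ with $Z_K(a)\cong I_N$ acting through $\check\kappa(\chi)$ and $P_N$ acting on $V_\chi$ via the homomorphism $\rho$ of \eqref{the map rho}. Irreducibility of each summand $\tilde{\on{P}}_{\chi}^{j}\otimes V_{\chi}$ as a $P_N$-representation follows from the Zariski density of $\rho_{\tC_\chi}$ established in \eqref{The monodromy representation is Zariski dense}, combined with the irreducibility of the fundamental representation $\omega_{j}$ of $\on{Sp}$; distinct summands are mutually non-isomorphic because they are distinguished either by their $I_N$-characters or by the underlying hyperelliptic curves $\tC_{a,\chi}$. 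The extension from $P_N$ to the full braid group $B_N$ is supplied by Lemma~\ref{tE_ij}. The principal technical difficulty will be the careful tracking of the $S_{N-m-1}$-sign conventions and the subtraction of hyperplane classes to correctly identify the resulting Schur functor with the fundamental representation $\omega_j$ of the symplectic group; this is exactly where the relative Terasoma formalism of Section~\ref{generalization of T} does its essential work.
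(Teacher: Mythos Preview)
Your proposal is correct and follows essentially the same route as the paper: apply the isomorphism \eqref{tX_m=tY_m} to pass from $\tX_m$ to $\tilde\mY_m$ twisted by $\tilde\fa^{rs}$, identify $\sigma$ with the image of $(0,\dots,0,1)$ in $I_{N+1}$ so that the $\sigma=-\id$ eigenspace singles out characters with $N+1\in\on{supp}\chi$, invoke Terasoma's computation $P(\tilde\mY_m)_\chi\cong\wedge^{N-m-1}H^1(\tC_{a,\chi},\bC)$, decompose this exterior power into fundamental representations using the Zariski density of \eqref{The monodromy representation is Zariski dense}, and finally appeal to Lemma~\ref{tE_ij} for irreducibility. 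The only cosmetic difference is that the paper packages the first step as Proposition~\ref{mon thm tX_m} and cites \cite{T} directly for the exterior-power identification, whereas you sketch the K\"unneth and $\bar I_{N+1}^{N-m-1}\rtimes S_{N-m-1}$-invariants argument that underlies it; either way the content is the same.
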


\subsection{Proof of Theorem \ref{irred}}
Let us start with the following proposition which is a consequence of Proposition \ref{prop-xy}.
\begin{prop}\label{mon thm X_m}
There is an isomorphism 
of representations of $\pi_1^{K}(\grs,a)\is
I_N\rtimes B_N$
\beqn
H^i(X_{m,a},\bC)\is\bigoplus_{\chi\in I_N^\vee}H^i(\mY_{m,a},\bC)_{\chi}\otimes V_\chi.
\eeqn
The group $I_N$ acts on the summand $H^i(\mY_{m,a},\bC)_\chi\otimes V_\chi$
via the character $\chi\in I_N^\vee$.
\end{prop}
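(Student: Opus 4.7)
The strategy is to exploit Proposition~\ref{prop-xy}, which presents $X_m|_\ars$ as the $I_N$-twist of $\mY_m\to\ars$ by the $I_N$-torsor $p:\tilde\fa^{rs}\to\ars$ of~\eqref{tars}, via the isomorphism $X_m|_\ars\cong\mY_m^t=(\mY_m\times_\ars\tilde\fa^{rs})/I_N$. Writing $\pi:\mY_m\to\ars$ and $\pi':X_m|_\ars\to\ars$ for the structure maps, the plan is first to establish the sheaf-level identity
$$
R^i\pi'_*\bC \;\cong\; \bigoplus_{\chi\in I_N^\vee}(R^i\pi_*\bC)_\chi \otimes \mL_\chi
$$
on $\ars$, where $(R^i\pi_*\bC)_\chi$ is the $\chi$-isotypic component for the $I_N$-action on $\mY_m$, and $\mL_\chi$ is the rank-one local system on $\ars$ whose $P_N$-monodromy is $\chi\circ\rho$ (with $\rho$ the map of~\eqref{the map rho}).

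To prove this sheaf isomorphism, I would pull back along $p$. Since the pullback of $X_m|_\ars$ along $p$ is $\mY_m\times_\ars\tilde\fa^{rs}$, proper base change gives $p^*R^i\pi'_*\bC \cong p^*R^i\pi_*\bC$ as sheaves on $\tilde\fa^{rs}$. The two sheaves therefore agree after pullback; their difference on $\ars$ is encoded entirely in the $I_N$-descent data on $\tilde\fa^{rs}$. For $R^i\pi_*\bC$ the descent datum is tautological, whereas for $R^i\pi'_*\bC$, arising from the twist $(\mY_m\times_\ars\tilde\fa^{rs})/I_N$, the descent datum is the tautological one composed with the $I_N$-action coming from $\mY_m$. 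Decomposing $R^i\pi_*\bC = \bigoplus_\chi(R^i\pi_*\bC)_\chi$ under this action, the twist multiplies the descent by $\chi$ on the $\chi$-summand, which is precisely the descent data defining $\mL_\chi$; this yields the displayed formula.

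Taking fibers at $a$ and setting $V_\chi := (\mL_\chi)_a$ gives the claimed isomorphism of vector spaces. It remains to match the $I_N\rtimes B_N$-equivariant structure. The pure braid group $P_N$ acts on $V_\chi$ as $\chi\circ\rho$ by construction of $\mL_\chi$, and on $H^i(\mY_{m,a},\bC)_\chi$ via the monodromy of $\mY_m\to\ars$; the extension from $P_N$ to the full braid group $B_N$ is provided by the $W$-equivariance in Proposition~\ref{prop-xy}. The stabilizer $Z_K(a)\cong I_N$ acts on the $\chi$-summand via $\chi$: this action corresponds, under Proposition~\ref{prop-xy}, to the first-factor $I_N$-action on $\mY_m^t$, which on each $(R^i\pi_*\bC)_\chi\otimes\mL_\chi$-summand is scalar multiplication by $\chi$.

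The chief technical point is the identification of the descent datum of $R^i\pi'_*\bC$ as the $I_N$-twist of the tautological one; everything else is a direct unpacking of the equivariance already established in Proposition~\ref{prop-xy}.
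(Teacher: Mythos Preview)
Your proposal is correct and follows essentially the same route as the paper. Both arguments rest on Proposition~\ref{prop-xy} and decompose according to $I_N^\vee$; the only difference is packaging: the paper computes directly on the fiber, observing that $(\mY_m^t)_a=(\mY_{m,a}\times(\tilde\fa^{rs})_a)/I_N$ and applying the K\"unneth formula together with $H^0((\tilde\fa^{rs})_a,\bC)=\bigoplus_\chi V_\chi$, whereas you carry out the equivalent computation at the sheaf level via descent data along the torsor $p:\tilde\fa^{rs}\to\ars$. Your local systems $\mL_\chi$ are exactly the summands of $p_*\bC$, so the two formulations coincide.
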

\begin{proof}
Observe that the families 
$X_m|_\ars\ra\ars$,
$\mY_m\ra\ars$, and $\tilde\fa^{rs}\ra\ars$ are all 
$W$-equivariant. Hence 
their cohomology groups $H^i(X_{m,a},\bC)$, $H^i(\mY_{m,a},\bC)$, $H^0((\tilde\fa^{rs})_a,\bC)$
carry an action of the braid group
$B_N\is\pi_1^{W}(\ars,a)$. Let \[
H^i(X_{m,a},\bC)=\bigoplus_{\chi\in I_N^\vee}H^i(X_{m,a},\bC)_\chi\ \ \quad\quad  
H^i(\mY_{m,a},\bC)=\bigoplus_{\chi\in I_N^\vee} H^i(\mY_{m,a},\bC)_\chi\]
 \[  
H^0((\tilde\fa^{rs})_a,\bC)=\bigoplus_{\chi\in I_N^\vee} V_\chi
\] 
be the 
decompositions with respect to the action of $I_N$; for the last identity we recall that $\tilde\fa^{rs}\to \fa^{rs}$ is an $I_N$-torsor.
For $\chi\in I_N^\vee$ and $b\in B_N$, we write $b\cdot\chi$ for the action of $b$ on $\chi$. 
Then the braid group action on $H^i(\mY_{m,a},\bC)$ is described as follows
$$b\in B_N:\,H^i(\mY_{m,a},\bC)_\chi\mapsto H^i(\mY_{m,a},\bC)_{b\cdot\chi}.$$
The $B_N$-actions on $H^i(X_{m,a},\bC)$ and $H^0((\tilde\fa^{rs})_a,\bC)$ are described in the same manner.

By the K\"unneth formula, 
the cohomology of 
the fiber of $\mY_m^t=(\mY_m\times_\ars\tilde\fa^{rs})/I_N$ over $a\in\ars$ is canonically isomorphic to 
\[H^i(\mY^t_{m,a},\bC)\is\bigoplus_{\chi\in I_N^\vee}H^i(\mY_{m,a},\bC)_\chi\otimes V_\chi.\]
Thus by (\ref{X_m=Y_m twist}) we obtain the desired $\pi_1^{K}(\grs,a)\is I_N\rtimes B_N$-equivariant isomorphism.
\end{proof}

The isomorphism in Proposition \ref{mon thm X_m} implies 
the following isomorphism of monodromy representations  
\beq\label{X_m is Y_m}
P(X_m)\is \bigoplus_{\chi\in I_N^\vee} P(\mY_m)_\chi\otimes V_\chi,
\eeq
where $$P(\mY_m):=H^{N-m-1}_{prim}(\mY_{m,a},\bC),\ \ P(\mY_m)_\chi:=H^{N-m-1}(\mY_m,\bC)_\chi\cap P(\mY_m).$$
Our goal is to decompose the representation above 
into irreducible representations.  
Observe that each summand $P(\mY_m)_\chi$ is invariant under the 
action of the pure braid group $P_N$. According to \cite[Theorem 2.5.1]{T}, 
there is an isomorphism of representations of $P_N$
\beqn
P(\mY_m)_\chi\is
\wedge^{N-m-1}H^1(C_a,\bC)_\chi\is\wedge^{N-m-1}H^1(C_{a,\chi},\bC),
\eeqn 
where $P_N$ acts on $H^1(C_{a,\chi},\bC)$ via the map $\rho_{C_\chi}:P_N\ra Sp(2i-2)$, $i=|\chi|/2$.

As an $Sp(2i-2)$-representation $\wedge^{N-m-1}H^1(C_{a,\chi},\bC)$ decomposes into a direct sum of fundamental representations in a well-known manner. This implies the following decomposition of $P(\mY_m)_\chi$ into irreducible representations of $P_N$:
\beq\label{decomp of Y_m}
P(\mY_m)_\chi\is
\wedge^{N-m-1}H^1(C_{a,\chi},\bC)=\bigoplus_{j\equiv N-m-1\on{mod}2,\ j\in[0,l]}\on{P}^j_\chi\eeq
where $l=\min\{N-m-1,-N+m+|\chi|-1\}$.

Combining (\ref{X_m is Y_m}) with (\ref{decomp of Y_m}), we obtain the following decomposition 
\beq\label{X_m= C_chi}
P(X_m)\is\bigoplus_{\chi\in I_N^\vee} P(\mY_m)_\chi\otimes V_\chi\is
\bigoplus_{\chi\in I_N^\vee}\bigoplus_j\on{P}^j_\chi
\otimes V_\chi.
\eeq

Using the notation from the beginning of this section the decomposition \eqref{X_m= C_chi} can be rewritten as 
\beq\label{Primitive coh of X_m}
P(X_m)\is
\bigoplus_{i}^{}\bigoplus_{j\equiv N-m-1\on{mod}2,\ j\in[0,l]} E_{ij}^N\ ,
\eeq
where $N-m+1\leq 2i\leq N$ and 
$l=\min\{N-m-1,-N+m+2i-1\}$.

We have the following 
\begin{lemma}\label{E_ij}
\hspace{2em}
\begin{enumerate}
\item Each $E_{ij}^{N}$ is an irreducible 
representation of $\pi^{K}_1(\fg_1^{rs},a)$. We denote by $\rho^N_{ij}:
\pi^{K}_1(\fg_1^{rs},a)\ra GL(E_{ij}^N)$ the corresponding map.\\ 
\item 
Suppose $j>0$.
Let $H:=\overline{\rho^N_{ij}(P_N)}\subset GL(E_{ij}^N)$ be the
Zariski closure of $\rho^N_{ij}(P_N)$ in $GL(E_{ij}^N)$ (recall $P_N\subset\pi^{K}_1(\fg_1^{rs},a)$ is the pure braid group).
Then we have $\Lie\, H\is\mathfrak{sp}(2i-2)$. 
In particular, the image $\rho^N_{ij}(\pi^{K}_1(\fg_1^{rs},a))$ is infinite.
\end{enumerate}
\end{lemma}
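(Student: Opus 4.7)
My plan is to handle the two parts separately, using throughout the direct sum decomposition $E_{ij}^{N}=\bigoplus_{\chi\in I_N^\vee,\,|\chi|=2i}\on{P}^j_\chi\otimes V_\chi$ and the Zariski density of $\rho_{C_\chi}(P_N)$ in $Sp(2i-2)$ established in~\eqref{The monodromy representation is Zariski dense}.

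For part (1), given any nonzero invariant subspace $W\subset E_{ij}^N$, I would argue in three steps. First, since $I_N=Z_K(a)$ acts on the $\chi$-summand through the character $\chi$ and distinct characters appear in distinct summands, the $I_N$-isotypic decomposition forces $W=\bigoplus_\chi W_\chi$ with $W_\chi=W\cap(\on{P}^j_\chi\otimes V_\chi)$. Next, each $W_\chi$ is $P_N$-stable since $P_N$ preserves each summand, and the Zariski density of $\rho_{C_\chi}(P_N)$ combined with the irreducibility of $\omega_j$ as an $Sp(2i-2)$-representation shows that $\on{P}^j_\chi\otimes V_\chi$ is $P_N$-irreducible; hence each $W_\chi$ is $0$ or the whole summand. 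Finally, the quotient $B_N/P_N\cong S_N$ acts on $\{\chi\in I_N^\vee:|\chi|=2i\}$ by permuting the support, and this action is transitive on $2i$-element subsets of $\{1,\dots,N\}$, so if any $W_\chi$ is nonzero then all are, forcing $W=E_{ij}^N$.

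For part (2), I would extract the Lie algebra from a single $\chi$-projection. Fix $\chi$ with $|\chi|=2i$; since $\on{P}^j_\chi\ne 0$ together with $j>0$ forces $i\geq 2$, the Lie algebra $\mathfrak{sp}(2i-2)$ is nonzero. The projection $\pi_\chi\colon H\to GL(\on{P}^j_\chi\otimes V_\chi)$ is surjective onto the Zariski closure of the $P_N$-image in the $\chi$-factor. The action on $V_\chi$ factors through a finite group of signs and contributes nothing to the Lie algebra, while on $\on{P}^j_\chi$ the action is $\omega_j\circ\rho_{C_\chi}$; since $\omega_j$ is a nontrivial irreducible representation of the simple Lie algebra $\mathfrak{sp}(2i-2)$ (hence has discrete kernel), the Zariski density input shows the image's Lie algebra is $\omega_j(\mathfrak{sp}(2i-2))\cong\mathfrak{sp}(2i-2)$. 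This already gives that $\Lie H$ surjects onto $\mathfrak{sp}(2i-2)$ and is therefore positive-dimensional, yielding at once the infiniteness of the image of $\rho_{ij}^N$ claimed in the ``In particular'' statement.

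The main obstacle is upgrading this surjection to the equality $\Lie H\is\mathfrak{sp}(2i-2)$: a priori the joint embedding $\Lie H\hookrightarrow\prod_{|\chi|=2i}\mathfrak{sp}(2i-2)_\chi$ could have image larger than a diagonal copy. To identify $\Lie H$ one must exhibit correlations among the various $\rho_{C_\chi}$; the natural source is the common cover $C_a\to\bP^1$, whose global symplectic monodromy $\rho_C\colon P_N\to Sp(H^1(C_a,\bC))$ restricts to the individual $\rho_{C_\chi}$ under the isotypic decomposition $H^1(C_a,\bC)=\bigoplus_\chi H^1(C_{a,\chi},\bC)$. Combining this with the $B_N$-equivariance, which identifies the various $\chi$-components via $S_N$, should force $\Lie H$ into a single diagonal $\mathfrak{sp}(2i-2)$; making this intertwining precise is the delicate technical step I would work out in detail.
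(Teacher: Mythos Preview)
Your treatment of part~(1) matches the paper's: $I_N$-isotypic decomposition, $P_N$-irreducibility of each summand $\on{P}^j_\chi\otimes V_\chi$ from Zariski density, and transitivity of the $B_N$-action on $\{\chi:|\chi|=2i\}$.

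For part~(2), the point you call the ``main obstacle'' is exactly where the paper's argument is brief. The paper asserts that $\rho_{ij}^N|_{P_N}$ factors as $\psi_2\circ\psi_1$ with
\[
\psi_1=\Bigl(\rho_{C_\chi},\,\textstyle\bigoplus_{|\chi|=2i}\rho_\chi\Bigr):P_N\longrightarrow Sp(2i-2)\times\mu_2^{\binom{N}{2i}}
\]
(a \emph{single} symplectic factor, where $\rho_\chi:P_N\to\mu_2$ records the sign twist), and then reads off $\Lie H\cong\mathfrak{sp}(2i-2)$ from $\overline{\psi_1(P_N)}^0=Sp(2i-2)$ together with injectivity of $d\psi_2$ on $\mathfrak{sp}(2i-2)$. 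But such a factorization would require all the $\rho_{C_\chi}$ with $|\chi|=2i$ to coincide as maps into $Sp(2i-2)$, and they do not: whenever $4i\le N$ one may choose $\chi,\chi'$ with disjoint supports, and restriction to the subvariety $\fa^{rs}(a,\chi)$ of~\eqref{subvariety of ars} makes $\rho_{C_\chi}$ Zariski dense while $\rho_{C_{\chi'}}$ is trivial, so the joint closure already contains $Sp(2i-2)\times\{1\}$ and $\Lie H$ is strictly larger than one copy of $\mathfrak{sp}(2i-2)$. Thus the paper does not actually supply the correlation among the $\rho_{C_\chi}$ that you correctly identify as missing, and the precise isomorphism stated in~(2) is not established (and appears to fail as written). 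What both arguments do establish cleanly is the surjection $\Lie H\twoheadrightarrow\mathfrak{sp}(2i-2)$, hence that $\rho_{ij}^N(\pi_1^K(\fg_1^{rs},a))$ is infinite; this is the only consequence of part~(2) invoked later in the paper.
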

\begin{proof}
We begin with the proof of (1). 
We first show that $E_{ij}^N$ is a $\pi^{K}_1(\fg_1^{rs},a)$-invariant subspace of $P(X_m)$.
For this, we observe that 
the decomposition in (\ref{X_m= C_chi}) is compatible with the action of 
$B_N$, that is, $\text{ for }b\in B_N$, 
$$b:\on{P}^j_\chi
\otimes V_\chi\mapsto\on{P}^j_{b\cdot \chi}
\otimes V_{b\cdot\chi}.$$ 
Since
the braid 
group $B_N$ acts transitively on the set
$\{\chi\in I_{N}^\vee\,|\,|\chi|=2i\}$, it follows that the subspace 
$$E_{ij}^N=\bigoplus_{\chi\in I_{N}^\vee,|\chi|=2i}\on{P}^j_\chi
\otimes V_\chi$$ is stable under the action of $\pi^{K}_1(\fg_1^{rs},a)$. Now since 
each summand 
$\on{P}^j_\chi
\otimes V_\chi$ is irreducible as a representation of 
$P_N$, it follows that each $E_{ij}^N$ is an
irreducible 
representation of $\pi^{K}_1(\fg_1^{rs},a)$.

We prove (2). 
For each $\chi\in I_N^\vee$, we define $\rho_\chi:P_N\xrightarrow{\rho} I_N\xrightarrow{\chi}\mu_2$. Here $\rho$ is the map in \eqref{the map rho}.
Define $$\psi_1:=(\rho_{C_\chi},\bigoplus_{\chi, |\chi|=2i}\rho_\chi)
:P_N\ra Sp(2i-2)\times\mu_2^{\binom {N}{2i}}.$$
Let $V_{ij}$ denote the irreducible representation of $Sp(2i-2)$ associated to the fundamental weight $\omega_j$.
Then the restriction of $\rho_{ij}^N:\pi^{K}_1(\fg_1^{rs},a)\ra GL(E_{ij}^N)$ to $P_N$ can be identified with 
\[\psi:P_N\xrightarrow{\psi_1} Sp(2i-2)\times\mu_2^{\binom {N}{2i}}\xrightarrow{\psi_2}GL(V_{ij})^{\times{N\choose 2i}}\]
where $\psi_2$ maps $Sp(2i-2)$ diagonally into  
$GL(V_{ij})^{\times{N\choose 2i}}$ and $\psi_2$ maps $\mu_2=\{\pm1\}$ to $\pm id\in GL(V_{ij})$. 
Since $\overline{\rho_{C_\chi}(P_N)}=Sp(2i-2)$, it implies that the 
connected component 
$\overline{\psi_1(P_N)}^0=Sp(2i-2)$. 
So to prove (2), it suffices to show that $\Lie(\on{Im}(\psi_2))\is\mathfrak sp(2i-2)$ for $j>0$. This follows
from the fact that the induced map $d\psi_2:\mathfrak sp(2i-2)\ra\bigoplus\mathfrak{gl}(V_{ij})$ on the Lie algebras 
 is injective.  
 \end{proof}

It follows from the lemma above that 
(\ref{Primitive coh of X_m}) is the decomposition of the monodromy representation 
$P(X_m)$ into irreducible subrepresentations. This completes the proof of Theorem
\ref{irred}.

\subsection{Proof of Theorem \ref{irred tE}}
The proof is similar to the case of $X_m$. First using the 
isomorphism (\ref{tX_m=tY_m}) and the same argument as in the case of $X_m$,
we obtain the following proppsition.

\begin{prop}\label{mon thm tX_m}
There is an isomorphism of $I_{N+1}\rtimes B_N$-representations 
\beqn
H^i(\tX_{m,a},\bC)\is\bigoplus_{\chi\in I_{N+1}^\vee}H^i(\tilde\mY_{m,a},\bC)_{\chi}\otimes V_{\chi},
\eeqn
where for $V_\chi$, we regard $\chi$ as an element in $I_N^\vee$
via the map  
$\check{\kappa}:I_{N+1}^\vee\ra I_N^\vee$ in \eqref{characters}, and the group 
$I_{N+1}$ acts on the summand $H^i(\tilde\mY_{m,a},\bC)_\chi\otimes V_\chi$
via the character $\chi\in I_{N+1}^\vee$.

\end{prop}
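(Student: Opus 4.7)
The argument runs entirely parallel to the proof of Proposition \ref{mon thm X_m}, with $I_N$ replaced by $I_{N+1}$, $\mY_m$ by $\tilde\mY_m$, and \eqref{X_m=Y_m twist} replaced by \eqref{tX_m=tY_m}. The starting point is the $I_{N+1}\rtimes W$-equivariant isomorphism of families over $\ars$
\begin{equation*}
\tX_m|_\ars \is \tilde\mY_m^t = (\tilde\mY_m \times_\ars \tilde\fa^{rs})/I_N
\end{equation*}
of \eqref{tX_m=tY_m}, in which $W$ acts diagonally and $I_{N+1}$ acts on the first factor. As in the proof of Proposition \ref{mon thm X_m}, the $W$-equivariance of each of $\tX_m|_\ars$, $\tilde\mY_m$, and $\tilde\fa^{rs}$ over $\ars$ endows the cohomology of the fiber with a compatible action of $B_N \is \pi_1^W(\ars,a)$.

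Next, I would pass to fibers over $a\in\ars$ and, using the freeness of the $I_N$-action on the zero-dimensional fiber $(\tilde\fa^{rs})_a$, identify
\begin{equation*}
H^i(\tilde\mY_m^t_a,\bC) \is \bigl(H^i(\tilde\mY_{m,a},\bC)\otimes H^0((\tilde\fa^{rs})_a,\bC)\bigr)^{I_N}
\end{equation*}
via the K\"unneth formula. Here $H^0((\tilde\fa^{rs})_a,\bC) \is \bigoplus_{\eta\in I_N^\vee} V_\eta$ is the regular representation of the torsor group $I_N$, while the $I_{N+1}$-isotypic decomposition of $H^i(\tilde\mY_{m,a},\bC)$ reads $\bigoplus_{\chi\in I_{N+1}^\vee} H^i(\tilde\mY_{m,a},\bC)_\chi$, on which $I_N$ acts through $\kappa: I_N \hookrightarrow I_{N+1}$ by the character $\check\kappa(\chi)$. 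Thus on the summand indexed by $(\chi,\eta)$ the diagonal $I_N$-action is by $\check\kappa(\chi)\cdot \eta$, and the invariants are nonzero precisely when $\eta = \check\kappa(\chi)^{-1} = \check\kappa(\chi)$ (using that all characters have order two). This immediately yields the decomposition asserted in the statement.

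Finally, one must check the equivariance: the residual $I_{N+1}$-action on the summand indexed by $\chi$ comes entirely from its action on the first factor $\tilde\mY_m$ and hence is given by $\chi$, while the $B_N$-action arises from the $W$-equivariance of the three families and permutes summands via the natural $W$-action on $I_{N+1}^\vee$ from \S\ref{the case of tX}. The main bookkeeping obstacle is keeping the different incarnations of $I_N$ straight --- the torsor group acting on $\tilde\fa^{rs}$, its image in $I_{N+1}$ under $\kappa$ acting on $\tilde\mY_m$, and the copy $Z_K(a) \is I_N$ identified via \eqref{natural map} --- as well as tracking how $I_{N+1}$ extends $Z_K(a)$ by the involution $\sigma$ on $\tX_m$; once these identifications are made in parallel with the proof of Proposition \ref{mon thm X_m}, the equivariance follows formally.
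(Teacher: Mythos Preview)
Your proposal is correct and follows essentially the same approach as the paper, which simply states that the argument is identical to that of Proposition~\ref{mon thm X_m} with \eqref{tX_m=tY_m} in place of \eqref{X_m=Y_m twist}. You have filled in the details of the K\"unneth decomposition and the identification of the $I_N$-invariants more explicitly than the paper does, but the underlying argument is the same.
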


Set 
$$P(\tilde\mY_m):=H^{N-m-1}_{prim}(\tilde\mY_{m,a},\bC).$$
By Proposition \ref{mon thm tX_m}, there is an isomorphism 
of $I_{N+1}\rtimes B_N$-representations 
\beq\label{tX_m is tY_m}
P(\tX_m)\is
\bigoplus_{\chi\in I_{N+1}^\vee} P(\tilde\mY_m)_\chi\otimes V_\chi.
\eeq
For any $\chi\in I_{N+1}^\vee$
with 
$|\chi|=2i$, 
let $\tC_{a,\chi}$ be the 
hyperelliptic curve defined 
in \S\ref{curves} and let
$\rho_{\tC_\chi}:P_{N}\ra Sp(H^1(\tC_{a,\chi},\bC))\is Sp(2i-2)$ denote 
the monodromy representation for the family 
$\tC_\chi\ra\ars$.  
Again by \cite{T}, we have 
an isomorphism of $P_N$-representations 
\beq\label{iso of tT}
P(\tilde\mY_m)_\chi\is
\wedge^{N-m-1}H^1(\tC_a,\bC)_\chi\is\wedge^{N-m-1}H^1(\tC_{a,\chi},\bC)).
\eeq 
Combining \eqref{tX_m is tY_m} with \eqref{iso of tT} we obtain the following 
decomposition 
\begin{equation}\label{isom tX_m}
P(\tX_m)\is\bigoplus_{\chi\in I_{N+1}^\vee}\wedge^{N-m-1}H^1(\tC_{a,\chi},\bC)\otimes V_\chi.
\end{equation}
We describe the monodromy representation 
$ P(\tX_m)^{\sigma=-id}$ (recall that $\sigma$ is the involution on $\tilde X_{m}$).
For this, we first 
observe that the involution action of $\langle\sigma\rangle\is\bZ/2\bZ$ on $\tX_m$ is 
equal to the composition of \[i_\infty:\bZ/2\bZ\ra I_{N+1},\ 1\mapsto {(0,...,0,1)},\] with the action of $I_{N+1}$ on $\tX_m$. Hence 
by (\ref{isom tX_m})
we have 
\beq\label{decomp-tX}
P(\tX_m)^{\sigma=-id}=\bigoplus_{\substack{\chi\in I_{N+1}^\vee\\ N+1\in\on{supp}\chi}} P(\tX_m)_\chi\is
\bigoplus_{\substack{\chi\in I_{N+1}^\vee\\ N+1\in\on{supp}\chi}} 
\wedge^{N-m-1}H^1(\tC_{a,\chi},\bC)\otimes V_\chi.
\eeq
 Again, since $\rho_{\tC_\chi}(P_N)$ is Zariski dense in $Sp(H^1(\tC_\chi,\bC))$, we have the following decomposition 
 $$\wedge^{N-m-1}H^1(\tC_{a,\chi},\bC)=\bigoplus_{ j\equiv N-m-1\on{mod}2,\ j\in[0,l]}\tilde{\on{P}}^j_\chi,$$
where $l=\min\{N-m-1,-N+m+|\chi|-1\}$.

Using the notation from the beginning of this section
the decomposition \eqref{decomp-tX} can be rewritten as\[P(\tX_m)^{\sigma=-id}\is
\bigoplus_{i}^{}\bigoplus_{\substack{j\equiv N-m-1\on{mod}2\\ j\in[0,l]}}^{}\tE_{ij}^N,\]
where $N-m+1\leq 2i\leq N+1$, 
$l=\min\{N-m-1,-N+m+2i-1\}$

The same argument as in the proof of Lemma \ref{E_ij} shows the following
\begin{lemma}\label{tE_ij}
\hspace{2em}
\begin{enumerate}
\item
$\tE_{ij}^N$ is an irreducible representation of $\pi_1^K(\grs,a)$. 
We denote by $\tilde\rho^N_{ij}:\pi_1^K(\grs,a)\ra GL(E_{ij}^N)$
the corresponding map. \\
\item Suppose $j>0$.
Let $H:=\overline{\tilde\rho^N_{ij}(P_N)}\subset GL(\tE_{ij}^N)$ be the
Zariski closure of $\tilde\rho^N_{ij}(P_N)$ in $GL(\tE_{ij}^N)$.
Then we have
$\Lie\, H\is\mathfrak{sp}(2i-2)$. 
In particular, the image $\tilde\rho^N_{ij}(\pi^{K}_1(\fg_1^{rs},a))$ is infinite.
\end{enumerate}
\end{lemma}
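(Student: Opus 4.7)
The plan is to run the same argument as in Lemma~\ref{E_ij}, replacing $I_N$ with $I_{N+1}$ and the family $C_\chi \to \fa^{rs}$ with $\tilde C_\chi \to \fa^{rs}$, and paying attention to the constraint that $N+1 \in \on{supp}\chi$ which singles out the $\sigma = -\id$ piece. The decomposition used is
\[
P(\tX_m)^{\sigma = -\id} \is \bigoplus_{\substack{\chi \in I_{N+1}^\vee \\ |\chi| = 2i,\ N+1 \in \on{supp}\chi}} \tilde{\on{P}}^j_\chi \otimes V_\chi,
\]
from \eqref{decomp-tX} combined with the $Sp(2i-2)$-decomposition of $\wedge^{N-m-1} H^1(\tC_{a,\chi},\bC)$, which is analogous to \eqref{X_m= C_chi}--\eqref{Primitive coh of X_m}.

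For part (1), I would first observe that $B_N = \pi_1^W(\ars,a)$ permutes the summands via its action on characters, exactly as in the proof of Lemma~\ref{E_ij}: for $b \in B_N$, $b \cdot (\tilde{\on{P}}^j_\chi \otimes V_\chi) = \tilde{\on{P}}^j_{b \cdot \chi} \otimes V_{b \cdot \chi}$. The key point is that $W = S_N$ acts on $I_{N+1}$ by permuting the \emph{first} $N$ coordinates (by convention fixed in~\S\ref{the case of tX}), so $B_N$ acts transitively on the set
\[
\{\chi \in I_{N+1}^\vee : |\chi| = 2i,\ N+1 \in \on{supp}\chi\},
\]
since such $\chi$ are specified by a choice of $(2i-1)$ positions among the first $N$. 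Hence $\tE_{ij}^N$ is stable under $\pi_1^K(\grs,a) = I_N \rtimes B_N$. To get irreducibility, each summand $\tilde{\on{P}}^j_\chi \otimes V_\chi$ is irreducible under the pure braid group $P_N$: the $\mathfrak{sp}(2i-2)$-fundamental representation $\tilde{\on{P}}^j_\chi$ is irreducible under $P_N$ by the Zariski-density statement \eqref{The monodromy representation is Zariski dense} applied to $\rho_{\tC_\chi}$. Combining transitivity of $B_N$ on the index set with $P_N$-irreducibility of each summand gives irreducibility of $\tE_{ij}^N$ as a $\pi_1^K(\grs,a)$-representation.

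For part (2), I would mimic the homomorphism factorization used for $E_{ij}^N$. Fix a base character $\chi_0$ in the orbit and write, for each $\chi$ in the orbit, $\rho_\chi : P_N \xrightarrow{\rho} I_N \xrightarrow{\check\kappa^*\chi} \mu_2$, where $\rho$ is the map of~\eqref{the map rho}. Consider
\[
\tilde\psi_1 := \Bigl(\rho_{\tC_{\chi_0}},\ \bigoplus_{\chi} \rho_\chi\Bigr) : P_N \longrightarrow Sp(2i-2) \times \mu_2^{\binom{N}{2i-1}},
\]
followed by the map $\tilde\psi_2$ that sends $Sp(2i-2)$ diagonally into $GL(V_{ij})^{\times \binom{N}{2i-1}}$ and each $\mu_2$ to $\pm \id$ on the corresponding factor, where $V_{ij}$ is the fundamental $\mathfrak{sp}(2i-2)$-representation of highest weight $\omega_j$. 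The composition $\tilde\psi_2 \circ \tilde\psi_1$ is (a conjugate of) the restriction of $\tilde\rho^N_{ij}$ to $P_N$. By \eqref{The monodromy representation is Zariski dense}, $\overline{\rho_{\tC_{\chi_0}}(P_N)} = Sp(2i-2)$, so the identity component $\overline{\tilde\psi_1(P_N)}^0 = Sp(2i-2)$. Finally, for $j > 0$, the differential $d\tilde\psi_2 : \mathfrak{sp}(2i-2) \to \bigoplus \mathfrak{gl}(V_{ij})$ is injective because $V_{ij}$ is a nontrivial irreducible $\mathfrak{sp}(2i-2)$-module, yielding $\Lie H \cong \mathfrak{sp}(2i-2)$ and hence the infiniteness claim.

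The only non-formal input is the Zariski density of $\rho_{\tC_\chi}(P_N)$ in $Sp(H^1(\tC_{a,\chi},\bC))$, which is already established in \eqref{The monodromy representation is Zariski dense} by restricting to the slice $\ars(a,\chi)$ and appealing to the A'Campo/Katz–Sarnak theorem on hyperelliptic monodromy. Thus there is no real obstacle; the mildest technical point to verify is the transitivity assertion above, where one must keep track of the fact that the coordinate $N+1$ of $\chi$ is fixed under the $W = S_N$ action (which is exactly the convention set up after \eqref{embedding}), so the $B_N$-orbit is the full set of $\chi \in I_{N+1}^\vee$ with $|\chi| = 2i$ and $N+1 \in \on{supp}\chi$.
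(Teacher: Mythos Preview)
Your proposal is correct and follows exactly the approach the paper indicates: the paper simply states that ``the same argument as in the proof of Lemma~\ref{E_ij}'' applies, and you have carried out precisely that adaptation, correctly noting the modifications required (replacing $I_N$ by $I_{N+1}$, using $\rho_{\tC_\chi}$ in place of $\rho_{C_\chi}$, observing that $W=S_N$ fixes the $(N{+}1)$-st coordinate so that $B_N$ acts transitively on $\{\chi\in I_{N+1}^\vee:\ |\chi|=2i,\ N+1\in\on{supp}\chi\}$, and that the orbit has size $\binom{N}{2i-1}$).
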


This completes the proof of Theorem \ref{irred tE}.

\subsection{The local systems $E_{ij}^N$ and $\tE_{ij}^N$}In this subsection, we show that from the constructions in previous sections, we have obtained the following set consisting of pairwise non-isomorphic irreducible $K$-equivariant local systems on $\Lg^{rs}_1$
\beq\label{set of local systems}
\left\{E_{ij}^{2n+1},\,i\in[1,n],\,j\in[0,i-1];\ \tE^{2n+1}_{ij},\,i\in[1,n+1],\,j\in[1,i-1],\,\tE^{2n+1}_{n+1,0}\cong\bC\right\}.\eeq
For this, we first observe that 
$$\text{$E_{ij}^{N}\is E_{i'j'}^{N}$ and $\tE_{ij}^N\is\tE_{i'j'}^N$ if and only if $i=i',j=j'$.}$$
In fact, assume that $E_{ij}^N\is E_{i'j'}^N$. Then we must have $i=i'$, otherwise,
the centralizer $Z_{K}(a)\cong I_N$ would act differently on $E_{ij}^N$ and $E^N_{i'j'}$. Now regarding $E_{ij}^N$ and $E^N_{ij'}$ as $P_N$-representations we see that $j=j'$. Similar argument applies to $\tE_{ij}^N$.

It remains to prove the following.
\begin{lemma}\label{E and tE}
We have $E_{i,j}^{N}\cong\tE_{i',j'}^{N}$ if and only if  $i+i'=(N+1)/2$ and $j=j'=0$.
\end{lemma}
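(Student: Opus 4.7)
The plan is to first use the $Z_K(a) \cong I_N$-action to pin down $i+i'$, then analyze the $P_N$-action on the $I_N$-isotypic components to force $j=j'=0$, and finally construct an explicit isomorphism in that case.

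\textbf{Step 1 (necessity of $i+i' = (N+1)/2$).} The set of $I_N$-characters appearing in $E^N_{ij}$ is exactly $\{\chi \in I_N^\vee : |\chi| = 2i\}$, while that appearing in $\tE^N_{i'j'}$ is $\{\check{\kappa}(\chi') : \chi' \in I_{N+1}^\vee,\ |\chi'| = 2i',\ N+1 \in \on{supp}\chi'\}$. A direct computation using the explicit formula for $\kappa$ in~\eqref{embedding} (and the identification of $I_N^\vee$ with the even-weight characters of $(\bbZ/2)^N$) shows that any such $\check{\kappa}(\chi')$ has weight $|\check{\kappa}(\chi')| = N + 1 - |\chi'|$ and support $[1,N] \setminus (\on{supp}\chi' \setminus \{N+1\})$. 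Matching the two sets of characters forces $2i = N+1 - 2i'$, equivalently $i+i' = (N+1)/2$, and furnishes a canonical $S_N$-equivariant bijection $\chi \leftrightarrow \chi'$ characterised by $\on{supp}\chi' = ([1,N] \setminus \on{supp}\chi) \cup \{N+1\}$.

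\textbf{Step 2 (sufficiency when $j = j' = 0$).} Since $V_{\omega_0}$ is the trivial representation, both $\on{P}^0_\chi$ and $\tilde{\on{P}}^0_{\chi'}$ are canonically $\bbC$, so
\begin{equation*}
E^N_{i,0} = \bigoplus_{|\chi|=2i} V_\chi, \qquad \tE^N_{i',0} = \bigoplus_{\substack{|\chi'|=2i' \\ N+1 \in \on{supp}\chi'}} V_{\check{\kappa}(\chi')}.
\end{equation*}
The bijection from Step 1, together with the tautological identification $V_{\check{\kappa}(\chi')} = V_\chi$, produces a $Z_K(a)$-equivariant isomorphism summand by summand. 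Since the bijection is $S_N$-equivariant and the $B_N$-actions on both sides factor through $B_N \to S_N$, the resulting map is $B_N$-equivariant, hence a $Z_K(a) \rtimes B_N$-equivariant isomorphism.

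\textbf{Step 3 (necessity of $j = j' = 0$).} Assume $i+i' = (N+1)/2$ and fix $\chi, \chi'$ as in Step 1. Passing to the $\chi$-isotypic component under $Z_K(a)$ (which is $P_N$-stable) and cancelling the common one-dimensional factor $V_\chi$, an isomorphism $E^N_{ij} \cong \tE^N_{i'j'}$ would produce an isomorphism $\on{P}^j_\chi \cong \tilde{\on{P}}^{j'}_{\chi'}$ of $P_N$-representations. \emph{The main obstacle is to distinguish these two representations.} The key observation is that $C_\chi$ only ramifies at points indexed by $\on{supp}\chi$, so $C_\chi \to \ars$ is pulled back along the forgetful projection $\ars \to \ars(\on{supp}\chi)$ and $\rho_{C_\chi}$ factors through the corresponding forgetful map $P_N \to P_{|\chi|}$; symmetrically, $\rho_{\tilde C_{\chi'}}$ factors through $P_N \to P_{|\chi'|-1} = P_{N-|\chi|}$. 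Suppose $j \geq 1$; then $|\chi| = 2i \geq 4$, so we may pick distinct $k, l \in \on{supp}\chi$. By the Picard--Lefschetz formula $\rho_{C_\chi}(\beta_{kl})$ is the square of a Dehn twist around a non-zero vanishing cycle and therefore a non-trivial unipotent in $Sp(2i-2)$; since a non-central unipotent acts non-trivially on any non-trivial irreducible $Sp$-representation, we obtain $\on{P}^j_\chi(\beta_{kl}) \neq \on{id}$. On the other hand, both strands $k,l$ are forgotten by $P_N \to P_{N-|\chi|}$, so $\tilde{\on{P}}^{j'}_{\chi'}(\beta_{kl}) = \on{id}$, contradicting the putative isomorphism. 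The symmetric case $j' \geq 1$ is handled using distinct indices $k, l \in [1,N] \setminus \on{supp}\chi$ (available since $|[1,N] \setminus \on{supp}\chi| = N - 2i = 2i'-1 \geq 3$ when $j' \geq 1$).
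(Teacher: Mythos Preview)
Your proof is correct and follows the same overall strategy as the paper, with one small slip in Step~2 and a slightly different execution in Step~3.

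In Step~2, the claim that ``the $B_N$-actions on both sides factor through $B_N \to S_N$'' is not accurate: $P_N$ acts on each $V_\chi$ via the non-trivial character $\chi\circ\rho$ (with $\rho$ as in~\eqref{the map rho}), so the action does not descend to $S_N$. The conclusion survives for a simpler reason: under your Step~1 bijection, $V_\chi$ and $V_{\check\kappa(\chi')}$ are literally the same one-dimensional $I_N\rtimes B_N$-representation (both are the $\chi$-isotypic piece of $H^0((\tilde\fa^{rs})_a,\bC)$), so the identity map is tautologically equivariant. This is exactly the paper's~\eqref{iso for chi}.

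For Step~3, both you and the paper exploit the key observation that $\on{supp}\chi$ and $\on{supp}\chi'\cap[1,N]$ are disjoint. The paper does this by restricting the families to the subvariety $\fa^{rs}(a,\chi')$ of~\eqref{subvariety of ars} (where only the points indexed by $\on{supp}\chi'$ are allowed to move): there $C_\chi$ has trivial monodromy while $\tilde C_{\chi'}$ has Zariski dense monodromy by~\eqref{The monodromy representation is Zariski dense}, forcing $j'=0$, and symmetrically for $j$. You instead single out a specific pure braid generator $\beta_{kl}$ with $k,l\in\on{supp}\chi$ and argue via Picard--Lefschetz that it acts as a non-trivial unipotent on one side and trivially on the other. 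Your route is more explicit but needs a bit more input (the transvection description and the fact that a non-central unipotent acts non-trivially on every non-trivial irreducible of $Sp$); the paper's is shorter given that Zariski density was already recorded.
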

\begin{proof}
Recall 
$$E_{ij}^{N}=\bigoplus_{\chi\in I_{N}^\vee,|\chi|=2i}\on{P}^j_\chi
\otimes V_\chi,\,\, \tE_{i'j'}^{N}=\bigoplus_{\substack{\chi'\in I_{N+1}^\vee,\, |\chi|=2i',\\  N+1\in\on{supp}\chi}}\tilde{\on{P}}^{j'}_{\chi'}\otimes V_{\chi'}.$$
For $V_{\chi'}$ we regard $\chi'$ as an element in $I_N^\vee$ via the map 
$\check{\kappa}:I_{N+1}^\vee\ra I_{N}^\vee$ in \eqref{characters}.
Observe that for $\chi'\in I_{N+1}^\vee$ and $N+1\in\on{supp}\chi'$, we have
\beq\label{chi and chi'}
\check{\kappa}(\chi')=\chi\text{ if and only if }\on{supp}\chi=\{1,\ldots,N+1\}\backslash\on{supp}\chi'.
\eeq
Thus the map $\check{\kappa}$ maps the subset $\{\chi'\in I_{N+1}^\vee,\, |\chi'|=2i',\ N+1\in\on{supp}\chi'\}$ bijectively to the subset $\{\chi\in I_{N}^\vee,|\chi|=2i\}$ where $2i+2i'=N+1$. 
Hence we have 
\beq\label{iso for chi}
\bigoplus_{\chi\in I_{N}^\vee,|\chi|=2i}V_{\chi}\cong\bigoplus_{\substack{\chi\in I_{N+1}^\vee,\, |\chi'|=2i',\\  N+1\in\on{supp}\chi}}V_{\chi'}\,\,\,\text{if and only if $i+i'=(N+1)/2$}.
\eeq
This implies $E_{i,0}^{N}\cong\tE_{i',0}^{N}$ for $i+i'=(N+1)/2$.

Conversely, we observe that 
$E_{i,j}^{N}\cong\tE_{i',j'}^{N}$ implies  
$\on{P}_\chi^j\otimes V_\chi\cong\tilde{\on{P}}_{\chi'}^{j'}\otimes V_{\chi'}$ (as representations of $I_N\rtimes P_N$)
for some $\chi\in I_{N}^\vee$ and  $\chi'\in I_{N+1}^\vee$ with $N+1\in\on{supp}\chi'$.
This implies that $\check\kappa(\chi')=\chi$ and 
it follows from \eqref{chi and chi'} that
$\on{supp}\chi\cap\on{supp}\chi'=\phi$. 
Therefore the monodromy representation of the restriction of 
$C_\chi\ra\ars$ (resp. $\tC_{\chi'}\ra\ars$) to the subvariety $\ars(a,\chi')$ (resp. $\ars(a,\chi)$) in \eqref{subvariety of ars} 
is trivial. On the other hand,  
the monodromy representation of the restriction of $\tC_{\chi'}\ra\ars$ (resp. $C_\chi\ra\ars$) to 
$\ars(a,\chi')$ (resp. $\ars(a,\chi)$) has Zariski dense image (see \eqref{The monodromy representation is Zariski dense}). 
This forces $j=j'=0$ and the desired claim follows again from \eqref{iso for chi}.
\end{proof}

\subsection{The local systems $E^{2n+1}_{i0}$, $\tE_{n+1,j}^{2n+1}$ and the $\calL_i$'s, $\cF_i$'s in \cite{CVX} }
Recall that in \cite[\S 2.3]{CVX}, we have defined the local systems $\calL_i$ and $\cF_i$ on $\Lg_1^{rs}$. We have the following.
\begin{lemma}
We have
\beq\label{E and L}
E_{i,0}^{2n+1}\cong \calL_{2i}\ \ \emph{if}\ \ 1\leq 2i\leq n,\ \ 
E^{2n+1}_{i,0}\cong \calL_{2n-2i+1}\ \  \emph{if}\ \  n+1\leq 2i\leq 2n,
\eeq
\beq\label{tE and F}
\tE_{n+1,j}^{2n+1}\cong\cF_j\text{ for } 1\leq j\leq n.\ \ 
\eeq
\end{lemma}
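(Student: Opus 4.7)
The plan is to match the local systems $E^{2n+1}_{i,0}$ and $\tilde{E}^{2n+1}_{n+1,j}$ constructed here with the local systems $\calL_i$ and $\cF_j$ from \cite[\S 5]{CVX}, by giving intrinsic descriptions of both sides and then comparing.

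First I would simplify the left-hand sides. Since $\on{P}^0_\chi = \wedge^0 H^1(C_{a,\chi}, \bC) = \bC$ with trivial $P_N$-action,
\[
E^{2n+1}_{i,0} \cong \bigoplus_{\chi \in I_{2n+1}^\vee, \, |\chi| = 2i} V_\chi,
\]
a permutation representation of $Z_K(a)\rtimes B_N \cong I_{2n+1}\rtimes B_N$ on the basis of weight-$2i$ characters of $I_{2n+1}$, with $B_N$ permuting these characters and $I_{2n+1}$ acting by evaluation. For the second family, the condition $|\chi|=N+1=2n+2$ together with $N+1\in \on{supp}\chi$ forces $\chi=\chi_0$, the unique character of $I_{N+1}$ with full support. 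A direct computation with the embedding $\kappa:I_N\hookrightarrow I_{N+1}$ shows that $\check{\kappa}(\chi_0)$ is trivial on $I_N$, so $V_{\chi_0}$ is the trivial $Z_K(a)$-representation. Thus
\[
\tilde{E}^{2n+1}_{n+1,j} \cong \tilde{\on{P}}^j_{\chi_0},
\]
which is the local system on $\fg_1^{rs}$ obtained by composing the $j$-th fundamental representation of $Sp(2n)$ with the monodromy of the family $\tilde{C}_{\chi_0}\to\ars$; as noted in \S\ref{curves}, this family is (a cover of) the universal family of hyperelliptic curves of genus $n$.

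Next I would recall the definitions of $\calL_k$ and $\cF_j$ in \cite[\S 5]{CVX}. The $\calL_k$, for $k\in[1,n]$, are defined as the irreducible $K$-equivariant local systems on $\fg_1^{rs}$ whose stalks at $a$ realise specific $B_N$-orbits of characters of the component group $Z_K(a)\cong I_{2n+1}$; these orbits are classified by the support size $|\chi|$, and since $2n+1$ is odd the distinct orbits are exactly those with $|\chi|=2i$ for $1\le 2i\le 2n$. The labelling convention of \cite{CVX} parametrises them by $k\in[1,n]$ via the rule $k=2i$ when $2i\le n$ and $k=2n-2i+1$ when $2i\ge n+1$. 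Combined with the first step this yields \eqref{E and L}. Similarly, $\cF_j$ is defined in \cite{CVX} as the local system on $\fg_1^{rs}$ associated to the $j$-th fundamental representation of $Sp(2n)$ via the monodromy of the universal family of genus-$n$ hyperelliptic curves, matching the description of $\tilde{E}^{2n+1}_{n+1,j}$ obtained above and giving \eqref{tE and F}.

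The main obstacle is the bookkeeping required to align the conventions of \cite{CVX} with those in the present paper: one must verify that the $B_N$-orbit of weight-$2i$ characters appearing in our $E^{2n+1}_{i,0}$ is exactly the orbit selected in \cite{CVX} to define $\calL_{2i}$ (respectively $\calL_{2n-2i+1}$), and that the precise universal family of hyperelliptic curves used to construct $\cF_j$ agrees with $\tilde{C}_{\chi_0}\to\ars$ up to passage to an intermediate quotient such as the $W$-quotient $\overline{C}_{\chi_0}\to\fc^{rs}$. Once these identifications are in place, the irreducibility statements in Lemmas \ref{E_ij}(1) and \ref{tE_ij}(1), together with the elementary fact that an irreducible representation of $Z_K(a)\rtimes B_N$ is determined by its $Z_K(a)$-isotypic content and the action of $P_N$ on any isotypic piece, force the isomorphisms \eqref{E and L} and \eqref{tE and F}.
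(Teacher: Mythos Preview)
Your argument for \eqref{tE and F} is essentially the paper's own: identify the unique $\chi_0\in I_{N+1}^\vee$ with full support, observe that $\check\kappa(\chi_0)$ is trivial so $V_{\chi_0}$ contributes nothing, and then match $\tilde{\on P}^j_{\chi_0}$ with the definition of $\cF_j$ via the universal hyperelliptic family. That part is fine.

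For \eqref{E and L} your route diverges from the paper, and the divergence matters. You reduce $E^{2n+1}_{i,0}$ to $\bigoplus_{|\chi|=2i}V_\chi$ and then assert that the $\calL_k$ of \cite{CVX} are \emph{defined} as the irreducible local systems realizing the $B_N$-orbits of characters of $I_{2n+1}$. But that is not how the paper (or \cite{CVX}) presents them: the $\calL_k$ arise as the summands of $(\check\pi_{2^n1})_*\bC|_{\fg_1^{rs}}$, i.e.\ from the $I_N$-torsor $K\times^{P_K}[\fn_P,\fn_P]_1^\bot|_{\fa^{rs}}\to\fa^{rs}$, while your $E^{2n+1}_{i,0}$ come from the $I_N$-torsor $\tilde\fa^{rs}\to\fa^{rs}$ via the map $\rho$ of \eqref{the map rho}. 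Two distinct $I_N$-torsors over $\fa^{rs}$ yield two $I_N\rtimes B_N$-structures on the regular representation of $I_N$; the $I_N$-isotypic decompositions agree, but the $P_N$-action on each isotypic line can differ by a character of the stabiliser, so your Clifford-theoretic principle (``determined by isotypic content and the $P_N$-action on an isotypic piece'') does not by itself force an isomorphism --- you still have to check that the $P_N$-actions match, which is exactly what you flag as ``the main obstacle'' and do not carry out.

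The paper closes this gap geometrically rather than bookkeeping through \cite{CVX}: it produces an explicit $I_N\rtimes W$-equivariant isomorphism
\[
\tilde\fa^{rs}\ \cong\ X_{2n}|_{\fa^{rs}}\ \stackrel{\text{Thm \ref{H=X}}}{\cong}\ \Hess_n^{E,\bot}|_{\fa^{rs}}\ \cong\ K\times^{P_K}[\fn_P,\fn_P]_1^\bot|_{\fa^{rs}},
\]
the first map being $(a,c)\mapsto[c_1^{-1},\ldots,c_{2n+1}^{-1}]$. Once the two torsors are identified, the two pushforwards coincide as $I_N\rtimes B_N$-representations, and since the dimensions $\binom{2n+1}{k}$ for $0\le k\le n$ are pairwise distinct the matching \eqref{E and L} follows immediately. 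So the missing ingredient in your sketch is precisely this torsor comparison; supplying it (or an equivalent verification that $\rho$ agrees with the $P_N$-action arising from $\check\pi_{2^n1}$) would complete your argument.
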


\begin{proof}

We begin with the proof of \eqref{E and L}. Recall from {\em loc. cit.} that we have
\beq\label{L_i's}
(\check{\pi}_{2^n1})_*\bC|_{\Lg_1^{rs}}\cong \bigoplus_{i=0}^n\calL_i\text{ and }\dim\calL_i={2n+1\choose i}
\eeq
where 
\beqn
\check{\pi}_{2^n1}:K\times^{P_K}[\fn_P,\fn_P]_1^\bot:=\{(x,0\subset V_n\subset V_n^\p\subset\bC^{2n+1})\,|\,x\in\Lg_1,\,xV_n\subset V_n^\p\}\to\Lg_1.
\eeqn
On the other hand, recall the $I_N$-torsor over $\fa^{rs}$ in \S\ref{branched cover X_m}:
\[\tilde{\pi}:\tilde\fa^{rs}=\{(a,c)\,|\,a=(a_1,...,a_{N})\in\fa^{rs},\,c=(c_1,...,c_N),\, c^2_i=
\prod_{j\neq i}(a_j-a_i)\}/(\bZ/2\bZ)\to\fa^{rs}. \] 
We have
\beq\label{E_i0's}
\tilde\pi_*\bC|_{\Lg_1^{rs}}\cong\bC\oplus\bigoplus_{i=1}^nE^{2n+1}_{i,0}\text{ and }\dim E^{2n+1}_{i,0}={2n+1\choose 2i}.
\eeq
We show that there is a $I_N\rtimes W$-equivariant isomorphism
\beq\label{I_N torsors}
\tilde\fa^{rs}\cong K\times^{P_K}[\fn_P,\fn_P]_1^\bot|_{\fa^{rs}}.
\eeq
Then \eqref{E and L} follows from \eqref{L_i's}, \eqref{E_i0's}, and dimension considerations of
the representations.

Using the identities  
$\sum_{i=1}^{2n+1} a_i^k\,c_i^{-2}=0,\ k=0,...,2n-1,$
it is easy to check that the map 
\[\tilde\fa^{rs}\ra X_{2n}|_{\fa^{rs}},\ (a,c)\in\tilde\fa^{rs}\mapsto [c_1^{-1},...,c_{2n+1}^{-1}]\]
defines a $I_N\rtimes W$-equivaraint isomorphism 
\[\tilde\fa^{rs}\cong X_{2n}|_{\fa^{rs}}.\]
On the other hand, by the description of the
Hessenberg varieties $\on{Hess}_{n}^{E,\bot}$ in \S\ref{two Hessenberg}, 
we have a natural map 
\[\on{Hess}_{n}^{E,\bot}\to K\times^{P_K}[\fn_P,\fn_P]_1^\bot,\ (x, V_{n-1}\subset V_n)\mapsto(x,V_n). \]
It is easy to check that the map above is a $K$-equivariant isomorphism over $\Lg_1^{rs}$.
The desired isomorphism \eqref{I_N torsors} follows from the following compositions of  isomorphisms
\[\tilde\fa^{rs}\cong X_{2n}|_{\fa^{rs}}\stackrel{\on{Thm} \ref{H=X}}\cong\on{Hess}_{n}^{E,\bot}|_{\fa^{rs}}\cong K\times^{P_K}[\fn_P,\fn_P]^\bot|_{\fa^{rs}}.\]
This completes the proof of \eqref{E and L}.

To prove \eqref{tE and F}, we observe that
\beqn
\tE^{2n+1}_{n+1,j}\cong\tilde{P}^j_{\chi_0}\otimes V_{\chi_0}\cong(\wedge^j H^1(\tC_{a,\chi_0},\bC))_{prim}\otimes V_{\chi_0},
\eeqn
where $\chi_0\in I_{N+1}^\vee$ is the unique character such that 
$|\chi_0|=2n+2$, and $\tC_{a,\chi_0}$ is 
the  
hyperelliptic curve of genus $n$ with affine equation $y^2=\prod_{i=1}^{2n+1}(x-a_i)$. 
By \eqref{chi and chi'},  
$\chi_0$, when regarded as an element in $I_N^\vee$ (see \eqref{characters}), is trivial. Hence 
$I_N$ acts trivially on $\tE_{n+1,j}^{2n+1}$ and $V_{\chi_0}$, i.e.
\beq\label{E_n+1,j's}
\tE^{2n+1}_{n+1,j}\cong(\wedge^j H^1(\tC_{a,\chi_0},\bC))_{prim}\cong\cF_j
\eeq
where the last isomorphism follows from the discussion above and the definition of $\cF_j$'s in {\em loc.cit.}

\end{proof}

\begin{remark}In \cite[Proof of Proposition 4.3]{CVX} we used the fact that among the \linebreak$\IC(\Lg_1,\calL_i)$'s $(i\geq 1)$, only $\IC(\Lg_1,\calL_{2j-1})$, $1\leq j\leq m$, appear in the decomposition of $(\check\tau_m)_*\bC[-]$, where $\check\tau_m=\check\tau_m^{2n+1}$ and $2m\leq n+1$. To prove this fact, it suffices to show that in the decomposition of the monodromy representation $P(X_{2m})$, only the above mentioned local systems appear. Applying Theorem \ref{irred} to $P(X_{2m}) $ with $N=2n+1$ we see that among the $E_{i0}$'s only those with $n-m+1\leq i\leq n$ appear. The desired conclusion follows from \eqref{E and L} and the fact that $2m\leq n+1$.
\end{remark}

\section{Computation of the Fourier transforms}\label{computation of FT}
Let $\fF:D_K(\Lg_1)\to D_K(\Lg_1)$ denote the Fourier transform, where we identify $\Lg_1$ and $\Lg_1^*$ via a $K$-invariant non-degenerate  bilinear form on $\Lg_1$. The Fourier transform $\fF$ induces an equivalence of categories $\fF:\on{Perv}_K(\Lg_1)\to\on{Perv}_K(\Lg_1).$

In this section we 
study the 
Fourier transforms of 
$\IC(\fg_1,E_{ij}^N)$ and $\IC(\fg_1,\tE_{ij}^N)$.
We show that they are supported on
$\mathcal N_1$, more precisely, on $\mN_1^3\subset\mN_1$,  
the closed subvariety consisting of nilpotent elements of 
order less than or equal to $3$. Thus we obtain many  more examples of IC complexes supported on nilpotent orbits whose Fourier transforms have both full support and 
infinite monodromy (see also \cite{CVX}). As an interesting corollary (see Example \ref{hyperelliptic curve of odd}), we show that 
the 
Fourier transform of the $\IC$ extension of the unique  non-trivial irreducible $K$-equivariant local system on
the minimal nilpotent orbit has full support and its monodromy is given by
a 
universal family of hyperelliptic curves.

The main result of this section is the following theorem.
\begin{thm}\label{FT of E_ij}
Let $\mN_1^3\subset\mN_1$ be 
the closed subvariety consisting of nilpotent elements of 
order less than or equal to $3$. Then 
$\mathfrak F(\IC(\fg_1,E_{ij}^{N}))$  and $\mathfrak F(\IC(\fg_1,\tE_{ij}^N))$
are supported on $\mN^3_1$.
\end{thm}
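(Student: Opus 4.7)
Both parts follow the same template: realize the relevant IC sheaf as a direct summand of a proper pushforward whose Fourier transform is manifestly supported on $\mathcal{N}_1^3$.

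\emph{Case $E_{ij}^N$.} By Theorem \ref{irred}, the local system $E_{ij}^N$ is an irreducible constituent of the monodromy representation $P(X_m)$ on $\Lg_1^{rs}$ for a suitable $m \in [1, N-1]$. Theorem \ref{H=X} identifies the family $X_m \to \Lg_1^{rs}$ with the restriction to $\Lg_1^{rs}$ of a proper Hessenberg family $\check\pi : \check X \to \Lg_1$, where $\check\pi = \check\sigma_l^N$ if $m = 2l-1$ and $\check\pi = \check\tau_l^N$ if $m = 2l$. Since $\check\pi$ is proper, the BBD decomposition theorem applies to $\check\pi_*\bC[-]$; combined with the Lefschetz decomposition accounting for non-primitive classes, this exhibits $\IC(\Lg_1, E_{ij}^N)$ as a direct summand of $\check\pi_*\bC[-]$. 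By the Fourier duality \eqref{Fourier}, $\fF(\check\pi_*\bC[-]) = \pi_*\bC[-]$ with $\pi = \sigma_l^N$ or $\tau_l^N$; formulas \eqref{support tau} and \eqref{support sigma} show $\on{Im}\pi \subset \mathcal{N}_1^3$, so the summand $\fF(\IC(\Lg_1, E_{ij}^N))$ of $\pi_*\bC[-]$ is supported on $\mathcal{N}_1^3$.

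\emph{Case $\tE_{ij}^N$.} The plan is to construct a branched double cover $\widetilde{\Hess}_l^{\bullet,\p} \to \Hess_l^{\bullet,\p}$, ramified along the next finer Hessenberg sub-stratum, that extends the double cover $\tilde X_m \to X_m$ from $\Lg_1^{rs}$ to $\Lg_1$ via Theorem \ref{H=X}. Denote the resulting proper map $\widetilde{\check\pi} : \widetilde{\Hess}_l^{\bullet,\p} \to \Lg_1$. Applying the decomposition theorem to $\widetilde{\check\pi}_*\bC[-]$ together with Theorem \ref{irred tE} exhibits $\IC(\Lg_1, \tE_{ij}^N)$ as a direct summand. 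One then constructs the Fourier-dual branched double cover $\widetilde{\Hess}_l^{\bullet} \to \Hess_l^{\bullet}$ analogously, and proves the branched-cover version of \eqref{Fourier}, namely $\fF(\widetilde{\check\pi}_*\bC[-]) = \widetilde\pi_*\bC[-]$. Since $\on{Im}(\widetilde\pi) \subset \on{Im}(\pi) \subset \mathcal{N}_1^3$, the conclusion follows exactly as in the $E_{ij}^N$ case.

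\emph{Main obstacle.} The hardest step is in the $\tE_{ij}^N$ case: constructing the Fourier-dual branched cover and verifying the branched-cover analog of \eqref{Fourier}. The clean argument for \eqref{Fourier} relies on $E_l^{\p}$ and $E_l$ (respectively $O_l^{\p}, O_l$) being orthogonal \emph{linear} complements in the trivial bundle $\Lg_1 \times K/P_l$, and this argument does not apply verbatim because the defining equation $\langle s^m v, v\rangle_Q = \epsilon^2$ of $\tilde X_m$ is quadratic, not linear, in $\epsilon$. One must identify the Kummer local system coming from the double cover of $\Hess_l^{\bullet,\p}$ with the fiberwise Fourier transform of the Kummer local system on the dual double cover of $\Hess_l^{\bullet}$. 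An alternative is to decompose $(p_m)_*\bC \cong \bC \oplus \mathcal{L}$ for a rank-one Kummer sheaf $\mathcal{L}$ on $X_m$, extend $\mathcal{L}$ across $\Hess_l^{\bullet,\p}$, and compute $\fF(\check\pi_*\mathcal{L})$ directly using functoriality of the Fourier transform on the Hessenberg bundle.
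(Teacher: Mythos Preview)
Your treatment of the $E_{ij}^N$ case is correct and matches the paper's argument essentially verbatim.

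For the $\tE_{ij}^N$ case you have correctly located the obstacle, and your ``alternative'' (extend the Kummer sheaf $\mathcal L$ coming from the double cover and compute $\fF(\check\pi_*\mathcal L)$ directly) is in fact the route the paper takes. But your proposal stops short of the key step. The paper does not attempt to build a Fourier-dual branched cover over $\Hess_l^\bullet$, nor to prove a branched-cover analogue of \eqref{Fourier}; that would be hard for exactly the reason you give. Instead it constructs an explicit map $\alpha:\Hess_k^{O,\p}\to[\bG_a/\bG_m^{[2]}]$ (via the quadratic function $(x,l)\mapsto\langle xl,l\rangle_Q$ on the tautological line $V_k/V_{k-1}$) and pulls back the Kummer sheaf from the stack. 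The crucial, and to you missing, observation is that $\alpha$ factors through the \emph{linear} quotient $\Hess_k^{O,\p}/\Hess_k^{E,\p}$. This factorization, combined with standard functoriality of the Fourier transform (\cite[Prop.~3.7.14]{KaS}), forces $\fF\bigl((\check\sigma_k^N)_*\alpha^*\IC(\mL)\bigr)$ to be supported on $\on{Im}(\tau_k^N)\subset\mN_1^3$---no dual double cover is needed. One then checks that over $\grs$ the pullback $\alpha^*\IC(\mL)$ is exactly $((p_{2k-1})_*\bC)^{\sigma=-id}$, so the desired IC is a summand.

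There is a further subtlety you do not anticipate: the construction above handles $m=2k-1$ odd. For $m=2k$ even, the analogous quadratic function $\langle xl,xl\rangle_Q$ is needed, and to make the factorization argument work the paper passes to an auxiliary Hessenberg variety $H$ carrying one extra isotropic flag $V_{k+1}$. So the even and odd cases require genuinely different constructions.
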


We first argue the case $\mathfrak F(\IC(\fg_1,E_{ij}^{N}))$.
For $m\leq \frac{N-1}{2}$,
consider the families of Hessenberg varieties $$\sigma_m^N:\on{Hess}_m^O\ra\fg_1,\ \tau_m^N:\on{Hess}_m^E\ra\fg_1$$ and 
$$\check\sigma_m^N:\on{Hess}_m^{O,\p}\ra\fg_1,\ \check\tau_m^{N}:\on{Hess}_m^{E,\p}\ra\fg_1$$ 
defined in \S\ref{two Hessenberg}. We have (see \eqref{Fourier})
\beqn
\fF((\check\sigma_m^N)_*\bC[-])=(\sigma_m^N)_*\bC[-],\ \ \fF((\check\tau_m^N)_*\bC[-])=(\tau_m^N)_*\bC[-].
\eeqn
By Theorem \ref{H=X}, over $\fg_1^{rs}$,
we have $\on{Hess}_m^{O,\p}\is X_{2m-1}$, $\on{Hess}_m^E\is X_{2m}$. Hence
the decomposition theorem implies that
\beqn
 \IC(\fg_1,P(X_{2m-1}))
\text{ is a direct summand of } (\check\sigma_m^N)_*\bC[-]\eeqn
\beqn
 \IC(\fg_1,P(X_{2m})))
\text{ is a direct summand of } (\check\tau_m^N)_*\bC[-]\,.
\eeqn
Therefore the Fourier transforms $\mathfrak F(\IC(\fg_1,P(X_{2m-1})))$ 
and $\mathfrak F(\IC(\fg_1,P(X_{2m})))$
appear as direct summands of $(\sigma_m^N)_*\bC[-]$ and $(\tau_m^N)_*\bC[-]$. 
Now in view of (\ref{support tau}) and (\ref{support sigma}), we see that 
$\mathfrak F(\IC(\fg_1,P(X_{2m-1})))$ 
and $\mathfrak F(\IC(\fg_1,P(X_{2m})))$ are supported on 
$\mN_1^3$. Since each local system $E^N_{ij}$ appears in $
P(X_m)$ for some $m$ (see  Theorem \ref{irred}), we conclude that $\mathfrak F(\IC(\fg_1,E_{ij}^{N}))$ is supported on $\mN_1^3$.

It remains to consider the case $\mathfrak F(\IC(\fg_1,\tE_{ij}^N))$. 
Since each local system $\tE_{ij}^N$ appears in $P(\tX_m)^{\sigma=-id}$
for some $m$, we are reduced to proving the following proposition:
\begin{proposition}\label{prop FT of tX}
$\mathfrak F(\IC(\fg_1,P(\tX_{m})^{\sigma=-id}))$ is supported on
$\mN_1^3$.
\end{proposition}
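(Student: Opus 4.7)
The strategy is to mirror the argument used for $P(X_m)$ via a branched double-cover construction. First, decompose
\begin{equation*}
P(\tX_m)=P(\tX_m)^{\sigma=\id}\oplus P(\tX_m)^{\sigma=-\id}=P(X_m)\oplus P(\tX_m)^{\sigma=-\id}.
\end{equation*}
Since $\fF(\IC(\fg_1,P(X_m)))$ has already been shown to be supported on $\mN_1^3$ (by realizing it as a summand of $(\check\sigma_m^N)_*\bC$ or $(\check\tau_m^N)_*\bC$ and invoking the Fourier duality \eqref{Fourier} together with \eqref{support sigma}, \eqref{support tau}), it suffices to prove that $\fF(\IC(\fg_1,P(\tX_m)))$ is supported on $\mN_1^3$.

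Next I would extend the family $\tilde\pi_m:\tX_m\to\grs$ to a global family $\check{\tilde\pi}_m:\widetilde{\mathcal H}_m\to\fg_1$ built from an appropriate Hessenberg-type variety. For $m=2k-1$ the natural candidate parametrizes tuples $(x,\ V_1\subset V_{k-1}\subset V_k\subset V_k^\perp\subset V_{k-1}^\perp,\ \epsilon)$ with $xV_{k-1}\subset V_k$, together with the equation $\epsilon^2=\langle x^{2k-1}v,v\rangle_Q$ where $v$ generates $V_1$; this is well-defined since the right-hand side is a quadratic form on $V_1$ and the condition forces $\epsilon$ to transform linearly in $v$. (The case $m=2k$ is analogous.) By Theorem \ref{H=X} and the inverse construction $V_i=V_{i-1}+xV_{i-1}$ over $\grs$, the restriction of $\widetilde{\mathcal H}_m$ to $\grs$ recovers $\tX_m$, and the decomposition theorem places $\IC(\fg_1,P(\tX_m))$ as a direct summand of $\check{\tilde\pi}_{m,*}\bC[-]$.

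Finally, compute $\fF(\check{\tilde\pi}_{m,*}\bC[-])$ by realizing $\widetilde{\mathcal H}_m$ inside an ambient linear bundle over a partial flag variety of $K$ enlarged by the $\epsilon$-direction, and performing a fiberwise Fourier transform. The dual is then a pushforward from an analogous construction supported over the opposite Hessenberg variety $\on{Hess}_k^O$ (or $\on{Hess}_k^E$), whose projection to $\fg_1$ lands in $\bar{\mathcal O}_{3^{k-1}1^{N+3-3k}}$ or $\bar{\mathcal O}_{3^{l-1}2\cdot 1^{N+1-3l}}$ by \eqref{support sigma} and \eqref{support tau}, both contained in $\mN_1^3$. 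This yields the desired support statement.

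The main obstacle is the Fourier-transform calculation itself. Since the branch locus is cut out by a \emph{quadratic} condition in $\epsilon$ (rather than a linear one in $x$), a branched double cover does not directly dualize to another branched double cover under $\fF$. The likely route is to split $\check{\tilde\pi}_{m,*}\bC=\bC\oplus\widetilde{\mathcal L}$ under the deck involution $\sigma$, absorb the $+1$-part into the $X_m$-case already established, and analyze the $-1$-part $\widetilde{\mathcal L}$ as the IC extension of a Kummer-type rank-one local system on the complement of the branch divisor $\on{Hess}_k^{E,\perp}\subset\on{Hess}_k^{O,\perp}$. The fiberwise Fourier transform of such a sheaf can be computed via the standard formula for the Fourier transform of a Kummer pullback, and one must verify that the resulting support lies in a sub-bundle of $\on{Hess}_k^O$ whose image in $\fg_1$ is contained in $\mN_1^3$.
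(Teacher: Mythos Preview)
Your final paragraph lands close to the paper's method, but the earlier plan of extending the double cover $\tX_m$ to a global family $\widetilde{\mathcal H}_m$ over $\fg_1$ is a dead end, for precisely the reason you flag: a branched double cover is not linear in $x$ and has no useful behaviour under fibrewise Fourier transform. The paper never constructs such an $\widetilde{\mathcal H}_m$. It works directly with the $(-1)$-eigensheaf, and the key device you are missing is the following.

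For $m=2k-1$ one builds a map $\alpha:\Hess_k^{O,\perp}\to[\bG_a/\bG_m^{[2]}]$ using the line bundle $V_k/V_{k-1}$ (not $V_1$; the latter only makes sense over $\grs$) together with the well-defined quadratic form $l\mapsto\langle xl,l\rangle_Q$ on it. Pulling back the nontrivial local system $\mL$ from $[\bG_m/\bG_m^{[2]}]$ and pushing forward along $\check\sigma_k^N$ yields a complex $\mathcal K$ whose restriction to $\grs$ is exactly $((\tilde\pi_{2k-1})_*\bC)^{\sigma=-\id}$, since $\tX_{2k-1}$ is the fibre product of $\Hess_k^{O,\perp}|_{\grs}$ with the square map $[\bG_a/\bG_m]\to[\bG_a/\bG_m^{[2]}]$. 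The decisive point, which your proposal does not supply, is that $\alpha$ \emph{factors through the quotient line bundle} $\Hess_k^{O,\perp}/\Hess_k^{E,\perp}$: the form $\langle xl,l\rangle_Q$ is linear in $x$ and vanishes exactly on $\Hess_k^{E,\perp}$. By the standard compatibility of Fourier transform with sub/quotient bundle pairs, this factorization alone forces $\fF(\mathcal K)$ to be supported on $\on{Im}\tau_k^N\subset\mN_1^3$. There is no ``standard formula for the Fourier transform of a Kummer pullback'' doing the work here; the factorization is the entire argument.

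For $m=2k$ even, which your proposal does not address, the same idea applies but requires an auxiliary quadric bundle $H\to\Hess_k^{E,\perp}$ (parametrising an extra isotropic $V_{k+1}\supset V_k$) and the form $l\mapsto\langle xl,xl\rangle_Q$; the relevant factorization is through a sub-bundle $H'\subset H$ whose orthogonal complement maps into $\mN_1^3$.
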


The proof of this proposition occupies the remainder of this section.

\subsection{Proof of Proposition \ref{prop FT of tX} when $m$ is odd}\label{sec case of Podd}

Recall that in \S\ref{two Hessenberg} we have introduced the families of Hessenberg varieties
\beqn
\Hess_k^{O,\p}:=\{(x,0\subset V_{k-1}\subset V_k\subset V_k^\bot\subset V_{k-1}^\bot\subset V=\bC^N)\,|\,x\in\fg_1,\ xV_{k-1}\subset V_k\}
\eeqn
\beqn
{\Hess_k^{E,\p}}=\{(x,0\subset V_{k-1}\subset V_k\subset V_k^\bot\subset V_{k-1}^\bot\subset V=\bC^N)\,|\,x\in\fg_1,\ xV_{k-1}\subset V_k, xV_k\subset V_k^\bot\}
\eeqn
and the natural projection maps
$\check{\sigma}_k^N:\Hess_k^{O,\p}\to\Lg_1$, 
$\check{\tau}_k^N:{\Hess_k^{E,\p}}\to\Lg_1$.

Our first goal is to show that  $\IC(\fg_1,P(\tX_{2k-1}))$ appears as a direct summand in the 
push forward of certain intersection cohomology complex on $\Hess_k^{O,\p}$ along $\check{\sigma}_k^N$.  

Let $[\bG_a/\bG_m^{[2]}]$
be the stack quotient, where $\bG_m^{[2]}\cong\bG_m$ acts on $\bG_a$ via the 
square map, i.e., for $t\in\bG_m$ and $x\in\bG_a$, $t:\,x\mapsto t^2x$.
We first introduce a map $$\alpha:\Hess_k^{O,\p}\ra [\bG_a/\bG_m^{[2]}].$$ Recall that
such a map is equivalent to a pair $(\widetilde\Hess_k^{O,\p},\phi)$, where 
$\widetilde\Hess_k^{O,\p}$ is a $\bG_m$-torsor over $\Hess_n^{O,\p}$ and $\phi:
\widetilde\Hess_k^{O,\p}\ra\bG_a$ is a 
map such that 
\beq\label{cond}
\text{$\phi(t\cdot v)=t^2\phi(v)$ for $v\in\widetilde\Hess_k^{O,\p}$ and 
$t\in\bG_m$.}
\eeq
 To construct such a pair, we set
\[\widetilde\Hess_k^{O,\p}:=\{(x,V_{k-1}\subset V_k,l)\,|\,(x,V_{k-1}\subset V_k)\in \Hess_k^{O,\p},\ 0\neq l\in V_k/V_{k-1}\is\bC\},\]
where the action of $\bG_m$  on $\widetilde\Hess_k^{O,\p}$ is given by  
$t\cdot(x,V_{k-1}\subset V_k,l)=(x,V_{k-1}\subset V_k,tl)$ for $t\in \bG_m$. Define  
\[\phi:\widetilde\Hess_k^{O,\p}\ra\bG_a,\ (x,V_{k-1}\subset V_k,l)\mapsto\langle xl,l\rangle_Q.\]
Note that the above pairing is well-defined since $xV_{k-1}\subset V_k$ and $xV_{k}\subset V_{k-1}^\bot$. One  checks easily that $\phi$ satisfies \eqref{cond}. This finishes the construction of 
$(\widetilde\Hess_k^{O,\p},\phi)$, hence that of the map $\alpha:\Hess_k^{O,\p}\ra[\bG_a/\bG_m^{[2]}].$ 
By construction, the map $\alpha$ 
is $K$-equivaraint (where $K$ acts trivially on $[\bG_a/\bG_m^{[2]}]$), moreover  it
factors through $\Hess_k^{E,\p}$, i.e.,
\beq\label{bar a}
\alpha:\Hess_k^{O,\p}\xrightarrow{}\Hess_k^{O,\p}/\Hess_k^{E,\p}\xrightarrow{\bar \alpha}
[\bG_a/\bG_m^{[2]}].\
\eeq
There is a unique non-trivial irreducible local system $\mL$ on 
$[\bG_m/\bG_m^{[2]}]\subset[\bG_a/\bG_m^{[2]}]$. We denote by $\IC([\bG_a/\bG_m^{[2]}],\mL)$ the corresponding 
intersection cohomology complex on $[\bG_a/\bG_m^{[2]}]$.
Let \[\mathcal K:=(\check\sigma_k^N)_*\alpha^*\IC([\bG_m/\bG_m^{[2]}],\mL)\in D_K(\fg_1).\]
The factorization in (\ref{bar a}) and 
the functorial properties of Fourier transform (see  \cite[Proposition 3.7.14]{KaS})  
imply the following
\beq\label{supp of K}
\text{$\mathfrak F(\mathcal K)$ is supported on 
$\on{Im}\,(\tau_k^N)\subset\cN_1^3$.}
\eeq
Thus to show that $\fF(\IC(\fg_1,P(\tX_{2k-1})))$ 
is supported on $\mathcal N_1^3$, 
it suffices to show that
\beq\label{odd}
\text{the complex $\mathcal K$ contains $\IC(\fg_1,P(\tX_{2k-1})^{\sigma=-id})$ as a direct summand.}
\eeq
Let $$\tilde\pi_{2k-1}:\tX_{2k-1}\xrightarrow{p_{2k-1}}X_{2k-1}\xrightarrow{\pi_{2k-1}}\grs$$ be the branched double cover of $X_{2k-1}$ and $\sigma$ the involution on $\tX_{2k-1}$ defined in \S\ref{quadrics}.
We have that  
$((\tilde\pi_{2k-1})_*\bC)^{\sigma=-id}$ contains $P(\tX_{2k-1})^{\sigma=-id}$
as a direct summand. 
The statement \eqref{odd} follows from the following claim
\[\mathcal K|_{\grs}\is((\tilde\pi_{2k-1})_*\bC)^{\sigma=-id}.
\]
To prove the claim,
let $s:[\bG_a/\bG_m]\ra [\bG_a/\bG_m^{[2]}]$
be the descent of the map $\bG_a\ra\bG_a,\,t\mapsto t^2$. Then 
from the definitions of $\tX_{2k-1}$ and the map $\alpha$, one can check that,
under the isomorphism $X_{2k-1}\is \on{Hess}_k^{O,\perp}|_{\Lg_1^{rs}}$ in Theorem \ref{H=X},
the branched double cover $\tX_{2k-1}$ can be identified with the following 
fiber product 
\beqn
\xymatrix{\tX_{2k-1}\ar[d]^{p_{2k-1}}\ar[r]&[\bG_a/\bG_m]\ar[d]^s
\\X_{2k-1}\is \on{Hess}_k^{O,\perp}|_{\grs}\ar[r]^-{\alpha|_\grs}&[\bG_a/\bG_m^{[2]}].}
\eeqn
Since $$s_*\bC=(s_*\bC)^{\sigma=id}\oplus(s_*\bC)^{\sigma=-id}=
\bC\oplus\IC([\bG_a/\bG_m^{[2]}],\mL),$$ by proper base change we have 
\[(\alpha|_\grs)^*\IC([\bG_a/\bG_m^{[2]}],\mL)\is ((p_{2k-1})_*\bC)^{\sigma=-id}.\] This implies that \[\mathcal K|_{\grs}\is(\pi_{2k-1})_*(\alpha|_\grs)^*\IC([\bG_a/\bG_m^{[2]}],\mL)\is
((\tilde\pi_{2k-1})_*\bC)^{\sigma=-id}.\]
This proves \eqref{odd}.

\begin{remark}
The construction of the map $\alpha$ was inspired by discussions with Zhiwei Yun. In particular, the idea of making use of the local system $\calL$ on $[\bG_a/\bG_m^{[2]}]$ was explained to one of us by him.
\end{remark}

\subsection{Proof of Proposition \ref{prop FT of tX} when $m$ is even}

Let us consider the following family of  
Hessenberg varieties 
\begin{eqnarray*}
&&H=\{(x,0\subset V_{k-1}\subset V_k\subset V_{k+1}\subset V_{k+1}^\bot
\subset V_{k}^\bot
\subset V_{k-1}^\bot\subset V=\bC^N)\\
&&\qquad\qquad|\,x\in\fg_1,\ xV_{k-1}\subset V_k,
xV_{k}\subset V_k^\bot\}.
\end{eqnarray*}
Note that  
the natural map \[p:H\ra \Hess_k^{E,\p},\  (x, V_{k-1}\subset V_k\subset V_{k+1})\mapsto
(x, V_{k-1}\subset V_k)\] realizes $H$ as a quadric bundle over $\Hess_k^{E,\p}$.

We first construct a map 
$\beta:H\ra[\bG_a/\bG_m^{[2]}]$. The construction is very similar to 
that of the map $\alpha$ in \S\ref{sec case of Podd} and we use the notations there.
Set 
\[\widetilde H:=\{(x,V_{k-1}\subset V_k\subset V_{k+1},l)\,|\,(x,V_{k-1}\subset V_k\subset V_{k+1})\in H,\ 0\neq l\in V_k/V_{k-1}\is\bC\},\]
where the action of $\bG_m$ on $\widetilde H$ is given by  
$t\cdot(x,V_{k-1}\subset V_k\subset V_{k+1},l)=(x,V_{k-1}\subset V_k
\subset V_{k+1},tl)$. Define 
\[\phi:\widetilde H\ra\bG_a,\ (x,V_{k-1}\subset V_k\subset V_{k+1},l)\mapsto\langle xl,xl\rangle_Q.\]
Note that the above pairing is well-defined since $xV_{k-1}\subset V_k$ and $xV_{k}\subset V_{k}^\bot$. One checks that $\phi$ satisfies \eqref{cond}. This finishes the construction of 
$(\widetilde H,\phi)$. Hence we obtain a map $\beta:H\ra[\bG_a/\bG_m^{[2]}]$.

Let $f:H\ra\fg_1$ be the natural projection map. 
Define 
\[\mathcal F:=f_*\beta^*\IC([\bG_a/\bG_m^{[2]}],\mL)\in D_K(\fg_1).\]
We show that
\beq\label{support F}
\text{$\mathfrak F(\mathcal F)$ is supported on $\mathcal N_1^3$, and}
\eeq
\beq\label{direct summand}
\text{the complex $\mathcal F$ contains $\IC(\fg_1,P(\tX_{2k})^{\sigma=-id})$ as 
a direct summand.}
\eeq
The proposition then follows from \eqref{support F} and \eqref{direct summand}. 

To prove \eqref{support F}, let 
\begin{eqnarray*}
&&H'=\{(x,0\subset V_{k-1}\subset V_k\subset V_{k+1}\subset V_{k+1}^\bot
\subset V_{k}^\bot
\subset V_{k-1}^\bot\subset V=\bC^N)\\
&&\qquad\qquad|\,x\in\fg_1,\ xV_{k-1}\subset V_k,
xV_{k}\subset V_{k+1}\}.
\end{eqnarray*}
Note that $H'\subset H$ is a sub-bundle.
By construction, the map $\beta$ factors through $H'$, i.e., \[\beta:H\stackrel{}\ra H/H'\stackrel{\beta'}\ra
[\bG_a/\bG_m^{[2]}].\]Let $\check f'$ be the natural projection map 
$$\check f':(H')^\bot:=\{(x,0\subset V_{k-1}\subset V_k\subset V_{k+1}\subset V_{k+1}^\bot
\subset V_{k}^\bot
\subset V_{k-1}^\bot\subset\bC^N)$$
$$\quad\quad\quad\,|\,x\in\fg_1,\ xV_{k}=0,\,xV_{k+1} \subset V_{k-1},\,
xV_{k}^\p\subset V_k\}\ra\mathcal N_1.$$ 
A direct calculation shows that 
\beq
\on{Im}\check f'=\bar\cO_{3^k1^{N-3k}}\text{ if }3k\leq N\text{ and }\on{Im}\check f'=\bar\cO_{3^{N-2k}2^{3k-N}}\text{ if }3k\geq N+1.
\eeq
The standard properties of Fourier transform imply that 
\beqn
\text{$\mathfrak F(\mathcal F)$ is supported on $\on{Im}(\check f')\subset\mN_1^3$.}
\eeqn This proves \eqref{support F}.

It remains to prove \eqref{direct summand}. 
Notice that the map $\beta$ factors as 
$\beta:H\stackrel{p}\ra \Hess_k^{E,\p}\xrightarrow{\bar \beta} [\bG_a/\bG_m^{[2]}]$. Consider the following diagram
\[\xymatrix{\beta:H\ar[rd]_f\ar[r]^p&\Hess_k^{E,\p}\ar[d]^{\check\tau_k^N}\ar[r]^-{\bar \beta}&[\bG_a/\bG_m^{[2]}]\\
&\fg_1&}.\]
We have $$\mathcal F:=f_*\beta^*\IC([\bG_a/\bG_m^{[2]}])\is(\check\tau_k^N)_*p_*p^*\bar \beta^*(\IC([\bG_a/\bG_m^{[2]}],\mL))$$
which is isomorphic to 
$(\check\tau_k^N)_*(\bar \beta^*(\IC([\bG_a/\bG_m^{[2]}],\mL))\otimes p_*\bC).$
Since $\bC$ is a direct summand of $p_*\bC$, it implies that
$(\check\tau_k^N)_*(\bar \beta^*(\IC([\bG_a/\bG_m^{[2]}],\mL))$
is a direct summand of $\mathcal F$. 
So it is enough to show that
$$\text{$\IC(\fg_1,P(\tX_{2k})^{\sigma=-id})$
is a direct summand of 
$(\check\tau_k^N)_*(\bar \beta^*(\IC([\bG_a/\bG_m^{[2]}],\mL))$.}$$
This follows from the same argument as in the proof of \eqref{odd}, replacing 
$X_{2k-1}$ (resp. $\tX_{2k-1}$) there by $X_{2k}$ (resp. $\tX_{2k}$).
Thus the proof of the proposition is complete.

\subsection{Matching for $\IC(\bar\cO_{2^i1^{2n+1-2i}},\cE_i)$, $i$ odd} Here we complete the proof of \cite[Theorem 2.3]{CVX} by treating the case of odd $i$. In \cite{CVX} we treated the even case of the proposition below and showed that there exists a permutation $s$ of the set $\{2j+1\,|\,1\leq 2j+1\leq n\}$, such that $\fF(\IC(\bar\cO_{2^i1^{2n+1-2i}},\cE_i))=\IC(\Lg_1,\cF_{s(i)})$ (see Proposition 3.2 and Theorem 2.3 in {\em loc.cit.}).
\begin{proposition}\label{matching nontrivial}
We have that 
\beqn
\fF(\IC(\bar\cO_{2^i1^{2n+1-2i}},\cE_i))=\IC(\Lg_1,\cF_{i}),
\eeqn
where $\cE_i$ denotes the unique non-trivial irreducible $K$-equivariant local system on $\cO_{2^i1^{2n+1-2i}}$.
\end{proposition}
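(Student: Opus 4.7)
The plan is as follows. By \cite[Theorems~5.2, 6.2]{CVX} there exists a permutation $s$ of the set $O_n := \{i \in [1,n] : i \text{ odd}\}$ with
\[
\fF(\IC(\bar\cO_{2^i 1^{2n+1-2i}}, \cE_i)) = \IC(\Lg_1, \cF_{s(i)}), \qquad i \in O_n.
\]
Since $\fF$ is an involution on simple $K$-equivariant perverse sheaves (up to shift), this is equivalent to
\[
\fF(\IC(\Lg_1, \cF_j)) = \IC(\bar\cO_{2^{s^{-1}(j)} 1^{2n+1-2s^{-1}(j)}}, \cE_{s^{-1}(j)}), \qquad j \in O_n.
\]
The task therefore reduces to showing $s^{-1}(j) \leq j$ for every $j \in O_n$, since any permutation of a finite totally ordered set with this property must be the identity (by induction: $s^{-1}(1) = 1$, then $s^{-1}(3) = 3$, and so on).

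The key input is the identification $\cF_j \cong \tE^{2n+1}_{n+1,j}$ from \eqref{tE and F}. For odd $j \in [1,n]$, Theorem~\ref{irred tE} with $m = j$ shows that $\tE^{2n+1}_{n+1,j}$ is a direct summand of $P(\tX_j)^{\sigma = -\id}$: the numerics check, namely $2(n+1) = N+1$, $j \equiv 2n - j \pmod{2}$, and $j \leq \min\{2n - j, j\}$ because $j \leq n$. By the decomposition theorem, $\IC(\Lg_1, \cF_j)$ is then a direct summand (up to shift) of $\IC(\Lg_1, P(\tX_j)^{\sigma = -\id})$, which in turn is a direct summand of the complex $\mathcal{K}$ constructed in \S\ref{sec case of Podd} with $k = (j+1)/2$. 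The support analysis given there, combined with \eqref{support tau}, yields
\[
\on{supp}\,\fF(\mathcal{K}) \subseteq \on{Im}\,\tau_{(j+1)/2}^{2n+1} = \bar\cO_{3^{(j-1)/2}\, 2\, 1^{(4n+1-3j)/2}},
\]
whence $\fF(\IC(\Lg_1, \cF_j))$ is supported on the same orbit closure.

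Combining this with the CVX matching forces $\bar\cO_{2^{s^{-1}(j)} 1^{\ldots}} \subseteq \bar\cO_{3^{(j-1)/2}\, 2\, 1^{\ldots}}$. A direct comparison of prefix sums then shows that $2^l 1^{2n+1-2l} \leq 3^{(j-1)/2}\, 2\, 1^{(4n+1-3j)/2}$ in dominance order if and only if $l \leq j$: the binding inequality occurs at the prefix of length $j+1$, where the left side reads $2(j+1) = 2j+2$ (if $l \geq j+1$) and the right side reads $2j+1$, so $2j+2 \leq 2j+1$ fails. This yields $s^{-1}(j) \leq j$ for every $j \in O_n$ and completes the proof.

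The conceptual obstacle is choosing the Hessenberg family judiciously: one must take $m = j$ itself, which is the \emph{smallest} value for which $\cF_j$ appears in $P(\tX_m)^{\sigma = -\id}$, so that the resulting nilpotent image $\bar\cO_{3^{(j-1)/2}\, 2\, 1^{\ldots}}$ is sharp enough to contain $\bar\cO_{2^j 1^{\ldots}}$ as its maximal order-$\leq 2$ sub-closure. Any larger $m$ would enlarge the image and lose the estimate $s^{-1}(j) \leq j$.
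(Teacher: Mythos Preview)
Your proof is correct and follows essentially the same route as the paper: both arguments use the support bound from \S\ref{sec case of Podd} with $k=(j+1)/2$ (equivalently the paper's $m$ with $2m-1=j$) to show that $\fF(\IC(\Lg_1,\cF_j))$ is supported on $\bar\cO_{3^{(j-1)/2}2\,1^{\ldots}}$, then combine this with the dominance-order characterization $\cO_{2^l1^{\ldots}}\subset\bar\cO_{3^{(j-1)/2}2\,1^{\ldots}}\Leftrightarrow l\leq j$ and the permutation $s$ from \cite{CVX}. The only cosmetic difference is that you package the induction as ``$s^{-1}(j)\leq j$ for all odd $j$ forces $s=\id$'', whereas the paper phrases it as an induction on $m$; these are the same argument.
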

\begin{proof}
It remains to prove the proposition for odd $i$. 
Assume that $2m\leq n+1$. By \eqref{supp of K} and \eqref{odd}, we see that the Fourier transform of $\IC(\Lg_1,P(\tilde{X}_{2m-1})^{\sigma=-id})$ is supported on $\on{Im}\tau_m^N=\bar\cO_{3^{m-1}2^11^{2n+2-3m}}$ (see \eqref{support tau}). Using Theorem \ref{irred tE} and \eqref{tE and F} we obtain that
\beqn
\begin{gathered}
\IC(\Lg_1,\cF_i)\text{ is a direct summand of }\IC(\Lg_1,P(\tilde{X}_{2m-1})^{\sigma=-id})\text{ if and only if}\\
i\text{ is odd and }1\leq i\leq 2m-1.
\end{gathered}
\eeqn
This implies that the Fourier transform of $\IC(\Lg_1,\cF_{2j-1})$, $1\leq j\leq m$, is supported on $\bar\cO_{3^{m-1}2^11^{2n+2-3m}}$. Now it is easy to check that $\cO_{2^i1^{2n+1-2i}}\subset\bar\cO_{3^{m-1}2^11^{2n+2-3m}}$ if and only if $i\leq 2m-1$. In view of \cite[Proposition 3.2 and Theorem 2.3]{CVX}, the proposition follows by induction on $m$.

\end{proof}

\begin{example}\label{hyperelliptic curve of odd}
Let $\mO_{min}=\mO_{2^11^{2n-1}}$. By the above proposition, we have 
\beqn
\mathfrak F(\IC(\fg_1,\mF_1))\is \IC(\bar\mO_{min},\mE_1),
\eeqn
where
$$\mF_1\is\tE_{n+1,1}^{2n+1}\is H^1(\tC_{a,\chi_0},\bC)\text{ (see \eqref{E_n+1,j's})}$$
is isomorphic to the monodromy representation associated with $\bar C_{\chi_0}\ra\fc^{rs}$, the 
universal family of hyperelliptic curves  in \S\ref{curves}.

\end{example}

\section{Conjectures and examples}\label{conjs and examples}
Let $N=2n+1$ and let $E^{2n+1}_{ij}$ (resp. $\tE^{2n+1}_{ij}$) be the monodromy representations of $\pi_1^K(\Lg_1^{rs})$ constructed from the 
families of complete intersections of quadrics in $\mathbb P^{2n}$ (resp. their double covers), see \S\ref{sec-monodromy of Hess}. Let $\{(\mO,\mE)\}_{\leq 3}$ denote the set of  pairs $(\mO,\cE)$ where $\mO$ is a 
$K$-orbit in $\mathcal N_1^3$ and $\cE$ is an irreducible $K$-equivariant local system on 
$\mO$ (up to isomorphism). Using Theorem \ref{FT of E_ij},  we establish  
an injective map
\beq\label{injectionS}
\mathcal{S}:\left\{\begin{array}{c}E_{ij}^{2n+1},\,i\in[1,n],\,j\in[0,i-1];\\
\tE^{2n+1}_{ij},\,i\in[1,n+1],\,j\in[1,i-1],\,\tE^{2n+1}_{n+1,0}\cong\bC\end{array}\right\}\hookrightarrow\{(\mO,\cE)\}_{\leq 3},
\eeq
where $\calS(E_{ij}^{2n+1})=(\cO,\cE)$ if and only if $\fF(\Lg_1,E_{ij}^{2n+1})=\IC(\bar\cO,\cE)$, similarly for $\tE_{ij}^{2n+1}$. Here the $K$-equivariant local systems on $\Lg_1^{rs}$ in the left hand side of \eqref{injectionS} are pairwise non-isomorphic, see \eqref{set of local systems}.

In this section we state two conjectures (Conjecture \ref{conj 1} and Conjecture \ref{conj 2})
that describe  the map $\calS$ in \eqref{injectionS} in the case of $\{E_{ij}^{2n+1}\}$ explicitly.
We verify our conjectures in several examples by studying 
various families of Hessenberg varieties.

In what follows we make use of the following observation:
\beq\label{orbit dim}
\text{an orbit $\cO_{3^k2^l1^{2n+1-3k-2l}}\subset\cN_1^3$ is odd dimensional $\Leftrightarrow k$ is odd and $l$ is even.} 
\eeq
This follows from the fact that $\dim\cO_{3^k2^l1^{2n+1-3k-2l}}=2(k+2kn+ln)-l(l-1)-3k(k+l)$, which one readily deduces from the formula $\dim Z_K(x)=\sum (i-1)\lambda_i$ (\cite{S}), for $x$ in a nilpotent orbit corresponding to the partition $\lambda_1\geq\lambda_2\geq\cdots$.

 \subsection{Complete intersections of even number of quadrics and conjectural matching}Recall that the local systems $E_{i,2j}^{2n+1}$, where $ i\in[1,n]$ and $2j\in[0,i-1]$, are constructed from families of complete intersections $X_{2m}$ of even number of quadrics in $\bP^{2n}$ for $m\in[1,n]$. 
 
 We first show  that
\beq\label{support even}
 \text{$\mathfrak{F}(\IC(\Lg_1,E_{i,2j}^{2n+1}))$ is supported on an {\em even} dimensional $K$-orbit in $\cN_1^3$}\,.
\eeq
To this end we first note that each $\fF(\IC(\Lg_1,E_{i,2j}^{2n+1}))$ is a direct summand of $\fF(\IC(\Lg_1,P(X_{2m})))$ for some $m$, which in turn is a direct summand of $(\tau_m^N)_*\bC[-]$. One readily checks that 
\beqn
\dim\on{Hess}_m^{E} =m(4n-3m+5)-2n-2, \text{ which is even}.
\eeqn
Note also that $\dim X_{2m,a}$ is even. Now~\eqref{support even} follows from the decomposition theorem and the fact that the fibers of $\tau_m^N$ have non-vanishing cohomology only in even degrees (see \S\ref{affine pavings}).

Thus~\eqref{orbit dim} puts a restriction on nilpotent orbits which can support $\fF(\IC(\Lg_1,E_{i,2j}^{2n+1}))$.
Our first conjecture is:
\begin{conjecture}\label{conj 1}
We have that
\begin{eqnarray*}
&\mathfrak{F}(\IC(\Lg_1,E_{i,2j}^{2n+1}))\cong\IC({\bar\cO_{3^{2(n-i)+1}2^{2(i+j-n)-1}1^{2i-4j}}},\bC)\text{ if } i+j\geq n+1\\
&\mathfrak{F}(\IC(\Lg_1,E_{i,2j}^{2n+1}))\cong\IC({\bar\cO_{3^{2j}2^{2(n-i-j)+1}1^{4i-2n-2j-1}}},\bC)\text{ if } i+j\leq n\text{ and }2i-j\geq n+1
\\
&\mathfrak{F}(\IC(\Lg_1,E_{i,2j}^{2n+1}))\cong\IC({\bar\cO_{3^{2j}2^{2i-4j}1^{2n-4i+2j+1}}},\bC)\text{ if}\ i+j\leq n\text{ and }2i-j\leq n.
\end{eqnarray*}
\end{conjecture}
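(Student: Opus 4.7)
The approach uses the multiple realizations of $E^{2n+1}_{i,2j}$ as a summand of the primitive cohomology $P(X_{2m})$ combined with the Fourier transform identity \eqref{Fourier} and the explicit description of images in \eqref{support tau}. By Theorem~\ref{irred} applied with $N=2n+1$, the irreducible local system $E^{2n+1}_{i,2j}$ appears as a summand of $P(X_{2m})$ precisely when $m\in[n+1-i+j,\,n-j]$. Combining Theorem~\ref{H=X} with \eqref{Fourier}, for each such $m$ the Fourier transform $\fF(\IC(\Lg_1,E^{2n+1}_{i,2j}))$ is a direct summand of $(\tau_m^N)_*\bC[\cdot]$, hence supported on $\on{Im}(\tau_m^N)$.

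Intersecting these bounds and using \eqref{support tau}, the intersection $\bigcap_m \on{Im}(\tau_m^N)$ is controlled by the two extreme values $m=n+1-i+j$ and $m=n-j$, since the orbit closures $\bar\cO_{\lambda(m)}$ are nested monotonically in $m$ (increasing for $m\le(2n+2)/3$, then decreasing). A case analysis based on whether each extreme satisfies $3m\le 2n+2$ or $3m>2n+2$ yields upper-bound orbit closures matching the three regimes of the conjecture, with the partition exponents emerging directly from the two formulas in \eqref{support tau}. However, as a small example (take $n=3$, $i=2$, $j=0$: the intersection is $\bar\cO_{3^1 2^1 1^2}$ while Case~2 predicts $\bar\cO_{2^3 1^1}$) shows, the intersection of images strictly contains the true support.

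To pin down the exact orbit I would explicitly decompose each $(\tau_m^N)_*\bC[\cdot]$ into irreducible perverse summands, exploiting the affine paving of fibers from \S\ref{affine pavings} so that all summands have the form $\IC(\bar\cO,\cL_\cO)[d_\cO]$. On $\Lg_1^{rs}$ the Fourier transform identity $\fF((\tau_m^N)_*\bC[\cdot])=(\check\tau_m^N)_*\bC[\cdot]$ lets one match these summands against the decomposition of $P(X_{2m})$ from Theorem~\ref{irred}, producing an explicit bijection between $\{E^{2n+1}_{i,2j}\}$ and pairs $(\cO,\cL_\cO)$. An induction on $n$ using Lemma~\ref{lemma-fiberiso} --- which reduces the fiber of $\tau_m^N$ over a point in $\cO_{3^k 2^l 1^\ast}$ to the fiber of $\tau_{m-k}^{N-3k}$ over a point in $\cO_{2^l 1^\ast}$ in a smaller orthogonal group --- would allow recursive computation of the multiplicity of each $\IC(\bar\cO,\bC)$ in $(\tau_m^N)_*\bC$, and the three-case structure of the conjecture should emerge from the combinatorics of this induction.

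The main obstacle is the combinatorial matching of multiplicities and degree shifts across the range of $m$, since the fibers of $\tau_m^N$ are not equidimensional over all orbits, so a naive Euler-characteristic count does not suffice. A secondary obstacle is establishing triviality of the local system $\cL_\cO$ on the matched orbit; this is expected since the affine paving of the fibers trivializes the monodromy of the $K$-equivariant pushforward, but it requires a careful verification that the component group of the stabilizer of a generic point in the matched orbit $\cO$ acts trivially on the top-dimensional cells of the paving.
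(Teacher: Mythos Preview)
The statement is labeled a \emph{conjecture} in the paper, and the paper does not prove it. What the paper offers is: (i) the $j=0$ case, which follows from \eqref{E and L} and \cite[Theorem~6.1]{CVX} (recorded as \eqref{Ei0}); and (ii) a single additional verification, namely $\fF(\IC(\Lg_1,E_{n,2}^{2n+1}))\cong\IC(\bar\cO_{3^12^11^{2n-4}},\bC)$, obtained by a direct analysis of $\tau_2^{2n+1}$. Everything else is left open.

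Your proposal is therefore not to be compared against a proof; there is none. What you have written is an honest strategy sketch, and you correctly diagnose its own limitations. Your range $m\in[n+1-i+j,\,n-j]$ is right, and your counterexample ($n=3$, $i=2$, $j=0$) correctly shows that intersecting the supports $\on{Im}(\tau_m^N)$ over all admissible $m$ gives a closure strictly larger than the conjectured orbit, so support bounds alone cannot settle the conjecture. The program you outline in your third paragraph --- explicitly decomposing $(\tau_m^N)_*\bC[-]$ using the affine paving and matching summands across the Fourier transform --- is exactly what the paper carries out in its one nontrivial verified case (\S6.2), and is the natural line of attack. But carrying it out in full generality is precisely the open problem; the ``combinatorial matching of multiplicities and degree shifts'' you flag as the main obstacle is the reason the statement remains a conjecture rather than a theorem.

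In short: your proposal is a reasonable outline of how one would try to attack the conjecture, and it coincides in spirit with the paper's partial verifications, but it is not a proof and the paper does not claim one either.
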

\begin{remark}
The nilpotent orbits appearing in the conjecture above exhaust all the non-zero {\em even} dimensional orbits of the form $\cO_{3^i2^j1^k}$, where the partition $3^i2^j1^k$ has no gaps.
\end{remark}

Note that the conjecture above holds for $E_{i,0}^{2n+1}$. This follows from \eqref{E and L} and \cite[Theorem 2.2]{CVX}, i.e., we have 
\beq\label{Ei0}
\begin{gathered}
\fF(\IC(\Lg_1,E_{i,0}^{2n+1}))=\IC(\bar\cO_{2^{2i}1^{2n-4i+1}},\bC)\ \ \emph{if}\ \ 2i\leq n\\
\fF(\IC(\Lg_1,E_{i,0}^{2n+1}))=\IC(\bar\cO_{2^{2n-2i+1}1^{4i-2n-1}},\bC)\ \ \emph{if}\ \ 2i\geq n+1.
\end{gathered}
\eeq

Below we verify the conjecture in a simple case that involves nilpotent orbits of order 3.

\subsection{Complete intersection of $4$ quadrics, $n\geq 3$.}In this subsection we show that
\beq\label{Fourier ex1}
\fF(\IC(\Lg_1,E_{n,2}^{2n+1}))=\IC(\bar\cO_{3^12^11^{2n-4}},\bC).
\eeq

Let us write $$\tau=\tau_2^{2n+1}:\on{Hess}_2^E\to\bar{\cO}_{3\,2\,1^{2n-4}},\ \ \check{\tau}=\check{\tau}_2^{2n+1}:\on{Hess}_2^{E,\p}\to\Lg_1.$$  
We have $\fF(\tau_*\bC[-])\cong\check{\tau}_*\bC[-]$ and
\beqn
\check{\tau}_*\bC[-]=\IC(\Lg_1,E_{n,2}^{2n+1}\oplus E_{n,0}^{2n+1}\oplus E_{n-1,0}^{2n+1})\oplus\bigoplus_{a=0}^{2n-4}\IC(\Lg_1,\bC)[2n-4-2a]\oplus\cdots
\eeqn
where $\cdots$ is a direct sum of IC complexes with smaller support. We have 
\beqn
\bar{\cO}_{3\,2\,1^{2n-4}}=\cO_{3\,2\,1^{2n-4}}\cup\cO_{3^11^{2n-2}}\bigcup_{0\leq i\leq 3}\cO_{2^i1^{2n-2i+1}}.
\eeqn
In view of Proposition \ref{matching nontrivial}, Lemma \ref{E and tE}, \eqref{tE and F} and \eqref{Ei0}, we conclude that $\fF(\IC(\Lg_1,E_{n,2}^{2n+1}))$ is not supported on $\bar\cO_{2^i1^{2n+1-2i}}$'s. Now it follows from \eqref{orbit dim} and \eqref{support  even} that 
$$\text{$\fF(\IC(\Lg_1,E_{n,2}^{2n+1}))$ is supported on $\bar\cO_{3\,2\,1^{2n-4}}$.}$$
Thus \eqref{Fourier ex1} follows the fact that the only IC complex supported on $\cO_{3\,2\,1^{2n-4}}$ appearing in $\tau_*\bC[-]$ is $\IC(\bar\cO_{3\,2\,1^{2n-4}},\bC)$ as $\tau$ is a resolution of $\bar\cO_{3\,2\,1^{2n-4}}$.

\subsection{Complete intersections of odd number of quadrics and a conjectural matching}Recall that the local systems $E_{i,2j-1}^{2n+1}$, where $ i\in[1,n]$ and $2j\in[2,i]$, are constructed from complete intersections $X_{2m-1}$ of odd number of quadrics in $\bP^{2n}$, $m\in[1,n]$. 

Using that $\dim\on{Hess}_m^{O}=m(2n-3m+5)-2n-3$, which is odd, and arguing as in \eqref{support even}, we obtain that
\beq\label{support odd}
\text{ $\mathfrak{F}(\IC(\Lg_1,E_{i,2j-1}^{2n+1}))$ is supported on an {\em odd} dimensional $K$-orbit in $\cN_1^3$.}
\eeq

Let $\cO\subset\cN_1^3$ be an odd dimensional $K$-orbit. To describe our second conjecture, let us first label the non-trivial irreducible $K$-equivariant local systems on $\cO$ as follows. By \eqref{orbit dim}, we can assume that $\cO=\cO_{3^{2k-1}2^{2l}1^{2n+4-6k-4l}}.$  

Let $x\in\cO_{3^{2k-1}2^{2l}1^{2n+4-6k-4l}}$, $k\geq 1$. We first define representatives for the component group $A_K(x)=Z_K(x)/Z_K(x)^0$. 
 Take a basis 
$$\text{$x^iu_j,i\in[0,2],j\in[1,2k-1]$, $x^iv_j,\ i\in[0,1],\ j\in[1,2l]$ and $w_i,\ i\in[1,2n+4-6k-4l]$}$$ of $V$ as in \cite[Lemma 5.6]{CVX}. Define $\gamma_i\in Z_K(x)$, $i=1,2$ as follows
\begin{eqnarray*}
&\gamma_1(w_1)=w_2,\ \gamma_1(w_2)=w_1,\ \gamma_1(x^iu_j)=-x^iu_j,\ i\in[0,2],\ j\in[1,2k-1],\\ &\gamma_2(x^jv_1)=x^jv_2,\ \gamma_2(x^jv_2)=x^jv_1,\ j\in[0,1],\\
 &\quad\text{ and }\gamma_1 (\text{resp. }\gamma_2)\text{ acts as identity on all other basis vectors}.
\end{eqnarray*}

Assume that $l\geq 1$ and $2n+4-6k-4l\neq 0$. Then 
$A_K(x)\cong\{1,\gamma_1,\gamma_2,\gamma_1\gamma_2\}\cong(\bZ/2\bZ)^2.$
Let $$\text{$\cE_{k,l}^1$ (resp. $\cE_{k,l}^2$, $\cE_{k,l}^3$)}$$ denote the irreducible $K$-equivariant local system on $\cO_{3^{2k-1}2^{2l}1^{2n+4-6k-4l}}$ corresponding to the irreducible character of $A_K(x)$ $$\text{$\chi_1$  (resp. $\chi_2$, $\chi_3$)  with $\chi(\gamma_1)=-1$ (resp. $-1,1$) and $\chi(\gamma_2)=1$ (resp. $-1,-1$).}$$ 

 Assume that $l=0$ and $2n+4-6k\neq 0$. Then $A_K(x)\cong\{1,\gamma_1\}\cong\bZ/2\bZ$. We denote by $\cE^1_{k,0}$ the irreducible $K$-equivariant local system on $\cO_{3^{2k-1}1^{2n+4-6k}}$ corresponding to the irreducible character $\chi$ of $A_K(x)$ with $\chi(\gamma_1)=-1$. 
 
 Assume that $l\geq 1$ and $2n+4-6k-4l\neq 0$. Then $A_K(x)\cong\{1,\gamma_2\}\cong\bZ/2\bZ$. We denote by $\cE^3_{k,l}$ the irreducible $K$-equivariant local system on $\cO_{3^{2k-1}2^{n+2-3k}}$ corresponding to the irreducible character $\chi$ of $A_K(x)$ with $\chi(\gamma_2)=-1$.

We will simply write $\cE^i$, $i=1,2,3$, when  the supports of these local systems are clear.

Our second conjecture is the following.
\begin{conjecture}\label{conj 2}
We have that
\begin{eqnarray*}
&\mathfrak{F}(\IC(\Lg_1,E_{i,2j-1}^{2n+1}))\cong\IC({\bar\cO_{3^{2(n-i)+1}2^{2(i+j-n-1)}1^{2i-4j+2}}},\cE^1)\text{ if } i+j\geq n+1\\
&\mathfrak{F}(\IC(\Lg_1,E_{i,2j-1}^{2n+1}))\cong\IC({\bar\cO_{3^{2j-1}2^{2(n-i-j+1)}1^{4i-2j-2n}}},\cE^2)\text{ if } i+j\leq n\text{ and }2i-j\geq n+1\\
&\mathfrak{F}(\IC(\Lg_1,E_{i,2j-1}^{2n+1}))\cong\IC({\bar\cO_{3^{2j-1}2^{2(i-2j+1)}1^{2n-4i+2j}}},\cE^3)\text{ if } i+j\leq n\text{ and }2i-j\leq n.
\end{eqnarray*}
\end{conjecture}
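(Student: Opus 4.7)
The plan is to prove Conjecture \ref{conj 2} in parallel with Conjecture \ref{conj 1}, by a simultaneous induction built on three ingredients: (i) the decomposition theorem applied to $(\sigma_m^N)_*\bC$ together with the compatibility \eqref{Fourier}; (ii) the injectivity of the matching $\calS$ of \eqref{injectionS}, which lets us use already-identified Fourier transforms to eliminate candidate supports; and (iii) a geometric analysis of the component-group action on the Hessenberg fibers via the affine paving of \S\ref{affine pavings}. The already-established inputs are Proposition \ref{matching nontrivial}, the matching \eqref{Ei0} for $E_{i,0}^{2n+1}$, and the support statement Theorem \ref{FT of E_ij}.

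First I would fix $E_{i,2j-1}^{2n+1}$ and choose the smallest odd $m$ for which this local system appears as a summand of $P(X_m)$; the parity condition in Theorem \ref{irred} forces $m$ odd. By Theorem \ref{H=X} together with \eqref{Fourier}, $\fF(\IC(\Lg_1, E_{i,2j-1}^{2n+1}))$ is then a direct summand of $(\sigma_{(m+1)/2}^N)_*\bC[-]$, hence supported in $\on{Im}\,\sigma_{(m+1)/2}^N$ as described by \eqref{support sigma}. Intersecting with the odd-dimensionality constraint \eqref{support odd} and the parity rule \eqref{orbit dim} yields a short list of candidate orbits of the form $\cO_{3^k 2^l 1^\ast}$ with $k$ odd and $l$ even. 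A descending induction on orbit dimension, combined with the injectivity of $\calS$, should pin down the support: IC extensions on strictly larger orbit closures are already occupied by $E_{i',0}^{2n+1}$ via \eqref{Ei0}, by $\tE_{i'j'}^{2n+1}$ via Proposition \ref{matching nontrivial}, and by $E_{i',2j'}^{2n+1}$ via the even case Conjecture \ref{conj 1} treated in tandem. The three cases in the statement emerge from the three regimes $3m \lessgtr N+1$ in \eqref{support sigma} combined with the secondary threshold comparing $2i-j$ to $n+1$. Once the support is known, the local system is identified by choosing $x$ in the open stratum and computing the action of the generators $\gamma_1, \gamma_2$ of $A_K(x)$ on $H^\ast((\sigma_{(m+1)/2}^N)^{-1}(x),\bC)$: the affine paving of \S\ref{affine pavings} exhibits the fiber as an iterated affine bundle over $\on{OGr}(j-k,\on{Im}\,x) \times \on{OGr}(m-k-1,\Sigma)$ via \eqref{map eta} and \eqref{projection}, and since $\gamma_1$ (by construction) acts non-trivially only on the $\Sigma$-factor while $\gamma_2$ acts only on the $\on{Im}\,x$-factor, a Lefschetz fixed-point computation on each orthogonal Grassmannian separately determines the signed characters and selects $\cE^1$, $\cE^2$, or $\cE^3$ according to whether the predicted partition contains only ones, both ones and twos, or only twos (beyond the threes).

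The main obstacle is this last identification of the local system. Neither $\gamma_1$ nor $\gamma_2$ preserves the cells of the affine paving individually, so the signs must be accumulated via Lefschetz fixed-point computations on the two orthogonal Grassmannian factors, with additional care in low-dimensional cells and when the projective-space fiber in \eqref{projection} is small. A secondary difficulty is organizational: because Steps 1--2 need Conjecture \ref{conj 1} as input, both conjectures must be proved in a single joint induction, with the very smallest $n$ (and degenerate sub-partitions without ones or without twos, which reduce to \eqref{Ei0} and Proposition \ref{matching nontrivial}) serving as the base and requiring direct verification.
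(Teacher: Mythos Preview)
The paper does \emph{not} prove Conjecture~\ref{conj 2}; it is stated as a conjecture and only verified in the small cases $m=3,5$ (equations \eqref{FT ex2}, \eqref{FT ex3}) and in the existential form of Lemma~\ref{lem curve case}. So there is no ``paper's own proof'' to compare against, and your proposal is attempting something strictly beyond what the paper achieves.

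Your strategy has a genuine gap at the step where you pin down the support. Several $E_{i,2j-1}^{2n+1}$ appear simultaneously in a single $P(X_m)$, and the decomposition theorem applied to $(\sigma_{(m+1)/2}^N)_*\bC$ only tells you the \emph{multiset} of $(\mO,\cE)$'s that occur, not which $E_{i,2j-1}^{2n+1}$ matches which pair. Injectivity of $\calS$ cannot resolve this: it rules out pairs already assigned, but for each candidate odd-dimensional orbit there are pairs $(\mO,\bC)$ with trivial local system that are \emph{never} in the image of $\calS$ (cf.\ the remark after Conjecture~\ref{conj 2} and the use of \cite[Theorem~4.9]{CVX} in \eqref{FT ex2}), so they are never ``occupied'' and your elimination argument cannot exclude them. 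The paper's own calculations illustrate exactly this obstruction: in \eqref{FT ex3} the third line is left as $E_{?,1}^{2n+1}$ with the index $i$ undetermined, and Lemma~\ref{lem curve case} proves only that each $E_{i,1}^{2n+1}$ lands on \emph{some} orbit $\mO_{3^1 2^{2j}1^{2n-4j-2}}$ with \emph{some} nontrivial local system, without specifying which. Your Lefschetz fixed-point computation on the orthogonal Grassmannian factors would, at best, identify the $A_K(x)$-character on the full fiber cohomology; it does not separate the contributions of the different $E_{i,2j-1}^{2n+1}$'s within $P(X_m)$, which is the missing ingredient.
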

\begin{remark}
In particular, the conjecture above implies that the set of all Fourier transforms $\mathfrak{F}(\IC(\Lg_1,E_{i,2j-1}^{2n+1}))$ coincides with the set of all IC complexes supported on  {\em odd} dimensional orbits in $\cN_1^3$, with {\em non-trivial} local systems.
\end{remark}
 In the following subsections we verify the conjecture above in two simple examples, see \eqref{FT ex2} and  \eqref{FT ex3}. We also prove a lemma (Lemma \ref{lem curve case}) that is compatible with our conjecture.

\subsection{Complete intersection of 3 quadrics, $n\geq2$}In this subsection we show that
\beq\label{FT ex2}
\mathfrak F(\IC(\fg_1,E_{n,1}^{2n+1}))=\IC(\bar\cO_{3^11^{2n-2}},\cE^1).
\eeq
Let us write $$\sigma=\sigma_2^{2n+1}:\on{Hess}_2^{O}\to\bar{\cO}_{3^11^{2n-2}},\ \ \check{\sigma}=\check\sigma_2^{2n+1}:\on{Hess}_2^{O,\p}\to\Lg_1.$$
The fiber $\sigma^{-1}(x)$ at $x\in\cO_{3^11^{2n-2}}$
is a non-singular quadric in $\mathbb P^{2n-3}$. Thus in the decomposition of $\sigma_*\bC[-]$, we have the following direct summands
\beqn
\bigoplus_{a=0}^{2n-4}\IC(\bar\cO_{3^11^{2n-2}},\bC)[2n-4-2a]\oplus\IC(\bar\cO_{3^11^{2n-2}},\cE^1).
\eeqn
We have $\fF(\sigma_*\bC[-])\cong\check{\sigma}_*\bC[-]$ and
\beqn
\check{\sigma}_*\bC[-]\cong\IC(\Lg_1,E_{n,1}^{2n+1})\oplus\cdots
\eeqn
Note that $\cO_{3^11^{2n-2}}$ is the only odd-dimensional orbit contained in 
$\bar{\cO}_{3^11^{2n-2}}$ and there is a unique non-trivial irreducible $K$-equvariant local system on $\cO_{3^11^{2n-2}}$, denoted by $\cE^1$. In view of \eqref{support odd}, the equation \eqref{FT ex2} follows from the fact that the support of 
$\fF(\IC(\bar\cO_{3^11^{2n-2}},\bC))$ is a proper subset of $\fg_1$ (see \cite[Proposition 4.4]{CVX2}).

\begin{remark}
Here we see that Fourier trxansform of $\IC$ complexes supported on
nilpotent orbits $\cO_\lambda$, where $\lambda$ has \emph{gaps},  with  
nontrivial local systems  can have full support (compare with \cite[Corollary 4.9]{CVX2}).
\end{remark}

\subsection{Complete intersection of 5 quadrics, $n\geq 4$}In this subsection, we show that
\beq\label{FT ex3}
\begin{gathered}
\mathfrak{F}(\IC(\bar\cO_{3^12^21^{2n-6}},\cE^1))=\IC(\Lg_1,E_{n,3}^{2n+1}),\ \ \mathfrak{F}(\IC(\bar\cO_{3^12^21^{2n-6}},\cE^2))=\IC(\Lg_1,E_{n-1,1}^{2n+1}),\\
\mathfrak{F}(\IC(\bar\cO_{3^12^21^{2n-6}},\cE^3))=\IC(\Lg_1,E_{?,1}^{2n+1}).
\end{gathered}
\eeq

Let us write $$\sigma=\sigma_3^{2n+1}:\on{Hess}_3^{O}\to\bar{\cO}_{3^21^{2n-5}},\ \check\sigma=\check\sigma_3^{2n+1}:\on{Hess}_3^{O,\p}\to\Lg_1.$$ We have $\fF(\sigma_*\bC[-])\cong\check{\sigma}_*\bC[-]$ and
\beqn
\check{\sigma}_*\bC[-]\cong\IC(\Lg_1,E_{n,1}^{2n+1}\oplus E_{n,3}^{2n+1}\oplus E_{n-1,1}^{2n+1})\oplus\cdots
\eeqn
The odd dimensional orbits contained in  $\Img\sigma=\bar{\cO}_{3^21^{2n-5}}$ are $\cO_{3^12^21^{2n-6}}$ and $\cO_{3^11^{2n-2}}$. In view of \eqref{FT ex2}, the equation \eqref{FT ex3} follows from Lemma \ref{lem curve case} (see \S\ref{ssec curve case}) and the following statement.
\beq\label{ICsupp}
\begin{gathered}
\text{The IC complexes supported on $\cO_{3^12^21^{2n-6}}$, that appear in the decomposition}\\\text{ of $\sigma_*\bC[-]$,  are
$
\IC({\bar\cO_{3^12^21^{2n-6}}},\cE^1\oplus\cE^2).
$}
\end{gathered}
\eeq
It remains to prove \eqref{ICsupp}. 
Note that there is no orbit $\cO$ such that $\cO_{3^12^21^{2n-6}}<\cO<\cO_{3^21^{2n-5}}$. The fiber $\sigma^{-1}(x_2)$ at $x_2\in\cO_{3^21^{2n-5}}$  is a nonsingular quadric in $\bP^{2n-6}$. Thus in the decomposition of $\sigma_*\bC[-]$,
\beqn
 \text{the IC complexes supported on $\cO_{3^21^{2n-5}}$ are $\bigoplus_{a=0}^{2n-7}\IC({\bar\cO_{3^21^{2n-5}}},\bC)[2n-7-2a]$.}
 \eeqn The fiber at $x_1\in\cO_{3^12^21^{2n-6}}$ is a quadric bundle over $\pi_0^{-1}(x_1)$ with fibers being a quadric $Q$ of rank $2n-8$ in $\bP^{2n-6}$. Here $\pi_0$ is Reeder's resolution of $\bar{\cO}_{3^21^{2n-5}}$, i.e.
$$\pi_0:\{(x,0\subset V_2\subset V_2^\p\subset V)\,|\,x\in\Lg_1,\,xV_2=0,xV_2^\p\subset V_2\}\to\bar{\cO}_{3^21^{2n-5}}.$$ It is easy to check that the map $\pi_0$ is small. Thus we have
\beqn
\mH^{k-8(n-1)}_{x_1}\IC(\bar\cO_{3^21^{2n-5}},\bC)=H^k(\pi_0^{-1}(x_1),\bC).
\eeqn
Note that $H^{\text{odd}}(\pi_0^{-1}(x_1),\bC)=0$, $H^{\text{odd}}(\sigma^{-1}(x_1),\bC)=0$, and 
\begin{eqnarray*}
&H^{2k}(\sigma^{-1}(x_1),\bC)=\bigoplus_{a=0}^{2n-7}H^{2a}(Q,\bC)\otimes H^{2k-2a}(\pi_0^{-1}(x_1),\bC)\\
&\cong\bigoplus_{a=0}^{2n-7}H^{2k-2a}(\pi_0^{-1}(x_1),\bC)\oplus (H_{\on{prim}}^{2n-6}(Q,\bC)\otimes H^{2k-2n+6}(\pi_0^{-1}(x_1),\bC)).
\end{eqnarray*}
We have $\on{codim}_{\on{Hess}_3^O}\cO_{3^12^21^{2n-6}}=2n-6$ and $\pi_0^{-1}(x_1)$ consists of two points. Moreover, $A_K(x_1)$ acts on $H_{\on{prim}}^{2n-6}(Q,\bC)\otimes H^{2k-2n+6}(\pi_0^{-1}(x_1),\bC)$ as $\chi_1(1\oplus\chi_3)=\chi_1\oplus\chi_2$. The equation \eqref{ICsupp} follows. This finishes the proof of \eqref{FT ex3}.

\begin{remark}
 Note that \eqref{FT ex3} shows that all three IC complexes supported on $\cO_{3^12^21^{2n-6}}$ with {\em nontrivial} local systems correspond to the monodromy representations constructed   from complete intersections of {\em odd} number of quadrics.
 \end{remark}

\subsection{The case of a curve}\label{ssec curve case}In this subsection we prove the following lemma by considering the family $X_{2n-1}$ of complete intersections of quadrics in $\bP^{2n}$.

\begin{lemma}\label{lem curve case}
For each $i\in[1,n-1]$, there exists some $1\leq j\leq [\frac{n-1}{2}]$ and a nontrival local system $\cE_{j}^s$ ($s=2\text{ or }3$) on $\cO_{3^12^{2j}1^{2n-2j-2}}$ such that
\beqn
\fF(\IC(\Lg_1,E^{2n+1}_{i,1}))\cong\IC(\bar\cO_{3^12^{2j}1^{2n-4j-2}},\cE_{j}^s).
\eeqn
\end{lemma}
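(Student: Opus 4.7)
The strategy is to apply the Fourier-theoretic framework of \S\ref{computation of FT} to the family $X_{2n-1}\to\Lg_1^{rs}$ of complete intersections of $2n-1$ quadrics in $\bP^{2n}$, whose fibers are curves. Invoking Theorem~\ref{irred} with $m=2n-1$ (so $N-m-1=1$), the parity constraint $j\equiv 1\pmod 2$ together with $j\leq\min\{1,2i-3\}$ forces $j=1$ and $2i\in[3,2n+1]$, yielding
\[P(X_{2n-1})\cong\bigoplus_{i=2}^{n} E_{i,1}^{2n+1}.\]
Combining Theorem~\ref{H=X}(1), which identifies $X_{2n-1}\cong\on{Hess}_n^{O,\p}|_{\Lg_1^{rs}}$, with the decomposition theorem shows that each $\IC(\Lg_1,E_{i,1}^{2n+1})$ is a direct summand of $(\check\sigma_n^{2n+1})_*\bC[-]$. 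Applying the Fourier transform via \eqref{Fourier}, and using \eqref{support sigma} (since $3n>N+1$ for $n\geq 3$), $\fF(\IC(\Lg_1,E_{i,1}^{2n+1}))$ is a direct summand of $(\sigma_n^{2n+1})_*\bC[-]$, supported on $\bar\cO_{3\cdot 2^{n-1}}$.

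Combining \eqref{support odd}, \eqref{orbit dim}, and the dominance constraint (the partition $3^3$ is not dominated by $3\cdot 2^{n-1}$ when $n\geq 3$) forces the support of $\fF(\IC(\Lg_1,E_{i,1}^{2n+1}))$ to be a single orbit of the form $\cO_{3\cdot 2^{2j}1^{2n-2-4j}}$ for some $j\in[0,\lfloor(n-1)/2\rfloor]$. To exclude $j=0$ for the indices $i$ treated by this lemma, I would invoke the injectivity of the map $\calS$ in \eqref{injectionS}: the only non-trivial $K$-equivariant local system on $\cO_{3\cdot 1^{2n-2}}$ is $\cE^1$, and by \eqref{FT ex2} the pair $(\cO_{3\cdot 1^{2n-2}},\cE^1)$ is already matched with $E_{n,1}^{2n+1}$, so no other $E_{i,1}^{2n+1}$ can land there. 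This yields $j\in[1,\lfloor(n-1)/2\rfloor]$.

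\textbf{The main obstacle} is to exclude $\cE^1$ as the matching local system on the orbits $\cO_{3\cdot 2^{2j}1^{2n-2-4j}}$ with $j\geq 1$. Since the characters $\chi_1,\chi_2,\chi_3$ of $A_K(x)\cong(\bZ/2)^2$ corresponding to $\cE^1,\cE^2,\cE^3$ are distinguished by $\chi_1(\gamma_2)=+1$ while $\chi_2(\gamma_2)=\chi_3(\gamma_2)=-1$, the claim $\cE^s\in\{\cE^2,\cE^3\}$ is equivalent to $\gamma_2$ acting non-trivially on the $\fF(\IC(\Lg_1,E_{i,1}^{2n+1}))$-isotypic component of the stalk cohomology at $x$. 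My plan is to verify this by an explicit computation: Lemma~\ref{lemma-fiberiso} reduces the fiber $(\sigma_n^{2n+1})^{-1}(x)$ to $\Gamma_{n-1,2j}^{2n-2}$, and the affine paving of \S\ref{affine pavings} provides a concrete model on which to track the $\gamma_2$-action (which permutes basis vectors within the $2$-blocks). Modeled on the prototype calculation \eqref{FT ex3} at $j=1$, the expected outcome is that $\gamma_2$ acts as $-1$ on the $\wedge^1H^1$-summand which matches the lowest fundamental weight ($j$-index equal to $1$) in the hyperelliptic monodromy \eqref{decomp of Y_m}. A complementary counting check confirms consistency: the number of target pairs $(\cO_{3\cdot 2^{2j}1^{2n-2-4j}},\cE^s)$ with $j\in[1,\lfloor(n-1)/2\rfloor]$ and $s\in\{2,3\}$ (taking into account the degeneration $A_K(x)\cong\bZ/2$ at $j=(n-1)/2$ when $n$ is odd, where only $\cE^3$ survives) equals the number $n-2$ of $E_{i,1}^{2n+1}$ treated by the lemma; combined with injectivity of $\calS$ and an independent identification (modeled on \eqref{FT ex3}) of the $\cE^1$-slots with higher $j$-index representations $E_{?,2j+1}^{2n+1}$, this closes the argument.
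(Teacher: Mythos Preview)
Your setup is correct and coincides with the paper's: use the curve family $X_{2n-1}\cong\Hess_n^{O,\p}|_{\grs}$, so each $\IC(\fg_1,E_{i,1}^{2n+1})$ is a summand of $(\check\sigma_n^{2n+1})_*\bC[-]$, and hence its Fourier transform is a summand of $(\sigma_n^{2n+1})_*\bC[-]$, supported on an odd-dimensional orbit in $\bar\cO_{3\,2^{n-1}}$ by \eqref{support odd} and \eqref{orbit dim}. The paper proceeds exactly this way.

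Where the paper diverges from your proposal is at the ``main obstacle''. The paper does \emph{not} argue by injectivity-plus-counting; instead it computes the full decomposition of $(\sigma_n^{2n+1})_*\bC[-]$ directly. The key structural observation is that $\sigma^{-1}(x_j)$, for $x_j\in\cO_{3\,2^j1^{2n-2-2j}}$, is a bundle of (singular) quadrics $Q_j=\{\sum_{k=1}^j a_k^2=0\}\subset\bP^{n-2}$ over the fiber $\rho^{-1}(x_j)$ of Reeder's resolution $\rho$, which is small. One then checks that $A_K(x_j)$ acts on $H^{\,2n+j-4}_{\on{prim}}(Q_j,\bC)$ via $\chi_3$ and on $H^{\on{top}}(\rho^{-1}(x_j),\bC)$ via $1\oplus\chi_1$, so that the new contribution at $x_j$ (for $j\geq 2$ even) is exactly $\chi_3\otimes(1\oplus\chi_1)=\chi_3\oplus\chi_2$, i.e.\ $\cE^2\oplus\cE^3$. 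In particular, $\IC(\bar\cO_{3\,2^{2j}1^{\cdots}},\cE^1)$ simply does not occur in $(\sigma_n^{2n+1})_*\bC[-]$ for $j\geq 1$, and neither does the trivial local system there. The lemma then follows from this decomposition together with \eqref{FT ex2}.

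Your counting argument does not close the gap: it presupposes that precisely the pairs $(\cO_{3\,2^{2j}1^{\cdots}},\cE^s)$ with $s\in\{2,3\}$ occur in $(\sigma_n^{2n+1})_*\bC[-]$ and that the $\cE^1$-slots are accounted for elsewhere, but this is exactly what must be proved. Likewise, you never exclude the \emph{trivial} local system on $\cO_{3\,2^{2j}1^{\cdots}}$ as a possible target for $\fF(\IC(\fg_1,E_{i,1}^{2n+1}))$; injectivity of $\calS$ alone does not do this. Finally, beware of modeling on \eqref{FT ex3}: in the paper that computation \emph{uses} the present lemma, so invoking it here risks circularity. Your instinct to track the $\gamma_2$-action on the fiber cohomology is exactly right --- but the clean way to do it is via the quadric-bundle description over Reeder's small resolution, which makes the $A_K$-action on primitive cohomology transparent.
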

Let us write 
$$\sigma=\sigma_n^{2n+1}:\on{Hess}_n^{O}\to\bar{\cO}_{3^12^{n-1}}\ \text{ and } \check\sigma=\check\sigma_n^{2n+1}:\on{Hess}_n^{O,\p}\to\Lg_1.$$ Assume that $n\geq 3$.
We have 
$
 \dim \on{Hess}_n^{O}=n^2+3n-3.
$
We show that
\begin{eqnarray}\label{decompositiontau}
&&\sigma_{*}\bC[-]\cong\bigoplus_{a=0}^{n-3}\IC({\bar\cO_{3^12^{n-1}}},\bC)[n-3-2a]\bigoplus _{2j=n-1}\IC({\bar\cO_{3^12^{2j}}},\cE^3)\nonumber\\
&&\qquad\oplus\bigoplus_{2\leq 2j\leq n-2}\IC({\bar\cO_{3^12^{2j}1^{2n-4j-2}}},\cE^2\oplus\cE^3)\oplus \IC({\bar\cO_{3^11^{2n-2}}},\bC\oplus\cE^1)\\
&&\qquad\oplus \IC(\bar\cO_{1^{2n+1}},\bC)[1]\oplus \IC(\bar\cO_{1^{2n+1}},\bC)[-1].\nonumber
\end{eqnarray}
The lemma follows from the decomposition above, the equations $\fF(\check\sigma_*\bC[-])\cong\sigma_*\bC[-]$,
$\check\sigma_*\bC[-]\cong\bigoplus_i\IC(\Lg_1,E_{i,1}^{2n+1})\oplus\cdots$, and \eqref{FT ex2}.

In the remainder of this subsection we prove \eqref{decompositiontau}. 
Consider first Reeder's resolution of $\bar{\cO}_{3^12^{n-1}}$ given by
\beqn
\rho:\{(x,0\subset V_1\subset V_n\subset V_n^\p\subset V_1^\p\subset \bC^{2n+1})\,|\,x\in\Lg_1,\,xV_n=0,xV_n^\p\subset V_1\}\to\bar{\cO}_{3^12^{n-1}}.
\eeqn
It is easy to check that $\rho$ is a small map.
Thus for $x_j\in\cO_{3^12^j1^{2n-2j-2}}$, we have 
\beq\label{stalkn1}
\mH_{x_j}^{k-n^2-2n}\IC(\bar\cO_{3^12^{n-1}},\bC)\cong H^k(\rho^{-1}(x_j),\bC).
\eeq

Now we study the map $\sigma$ and the decomposition of $\sigma_*\bC[-]$. The fiber $\sigma^{-1}(x_{n-1})$ at $x_{n-1}\in\cO_{3^12^{n-1}}$ is a nonsingular quadric in $\bP^{n-2}$ and $\on{codim}_{\on{Hess}_n^{O}}\cO_{3^12^{n-1}}=n-3$. Thus in the decomposition of $\sigma_*\bC[-]$, the following IC complexes supported on $\cO_{3^12^{n-1}}$ appear,
\beq\label{decom-ds}
\bigoplus_{a=0}^{n-3}\IC(\bar\cO_{3^12^{n-1}},\bC)[n-3-2a]\text{ for all }n\text{ and }
\IC(\bar\cO_{3^12^{n-1}},\cE)[-]\text{ if }n\text{ is  odd},
\eeq
where $\cE$ is the unique nontrivial irreducible $K$-equivariant  local system on $\cO_{3^12^{n-1}}$.
 
We have that $\sigma^{-1}(x_j)$ ($x_j\in\cO_{3^12^j1^{2n-2j-2}}$) is a $Q_j$-bundle over $\rho^{-1}(x_j)$  for $j\geq 1$, where $Q_j$ is a quadric $\sum_{k=1}^ja_k^2=0$ in $\bP^{n-2}:=\{[a_1,\cdots,a_{n-1}]\}$. 

For $j$ odd, or $j\geq 2$ even and $2k>\on{codim}_{\on{Hess}_n^{O}}\cO_{x_j}$, we have
\beqn
H^{2k}(\sigma^{-1}(x_j),\bC)\cong\bigoplus_{a=0}^{n-3}H^{2k-2a}(\rho^{-1}(x_j),\bC)\cong\mH_{x_j}^{2k}\bigoplus_{a=0}^{n-3}\IC({\bar\cO_{3^12^{n-1}}},\bC)[-n^2-2n-2a]
\eeqn
where in the second isomorphism we use \eqref{stalkn1}. Thus in view of \eqref{decom-ds} IC complexes supported on $\cO_{3^12^{j}1^{2n-2j-2}}$, for odd $j<n-1$, do not appear in the decomposition of $\sigma_*\bC$.

For $j\geq 2$ even, and $2k=\on{codim}_{\on{Hess}_n^{O}}\cO_{x_j}$, we have
\beqn
H^{2k}(\sigma^{-1}(x_j),\bC)\cong\mH_{x_j}^{2k-n^2-2n}\bigoplus_{a=0}^{n-3}\IC({\bar\cO_{3^12^{n-1}}},\bC)[-2a]
\eeqn
\beqn
\oplus (H^{2n+j-4}_{\text{prim}}(Q_j,\bC)\otimes H^{2\dim\rho^{-1}(x_j)}(\rho^{-1}(x_j),\bC)).
\eeqn
Note that $\rho^{-1}(x_j)$ has two irreducible components. Moreover $A_K(x_j)$ acts on $H^{2n+j-4}_{\text{prim}}(Q_j,\bC)$ via the character $\chi_3$, and acts on $H^{2\dim\rho^{-1}(x_j)}(\rho^{-1}(x_j),\bC)$ via $1\oplus\chi_1$.
In view of \eqref{decom-ds}, we conclude that IC complexes $\IC(\bar\cO_{3^12^j1^{2n-2j-2}},\cE^2)$ and $\IC(\bar\cO_{3^12^j1^{2n-2j-2}},\cE^3)$, for $j$ even, appear in the decomposition of $\sigma_*\bC[-]$. 

For $j=0$ and $2k=\text{codim}_{\on{Hess}_n^{O}}\cO_{x_0}=n^2-n-2$, since $2k-2a>2\dim\rho^{-1}(x_0)$ for all $0\leq a\leq n-3$, we have
\beqn
\bigoplus_{a=0}^{n-3}\mH_{x_0}^{2k-2a-n^2-2n}\IC({\bar\cO_{3^12^{n-1}}},\bC)=0.
\eeqn
We have $2\dim\sigma^{-1}(x_0)={\on{codim}_{\on{Hess}_n^{O}}{\cO_{x_0}}}$ and $\sigma^{-1}(x_0)\cong\{0\subset W_{n-2}\subset W_{n-1}\subset W_{n-1}^\p\subset W_{n-2}^\p\subset\bC^{2n-2}\}$. Note that $\sigma^{-1}(x_0)$ has two connected components and $A_k(x_0)$ permutes them. We conclude that the IC complexes supported on $\cO_{x_0}$ appearing in $\sigma_*\bC[-]$ are $\IC(\bar\cO_{3^11^{2n-2}},\bC)\oplus\IC(\bar\cO_{3^11^{2n-2}},\cE^1)$.

The decomposition \eqref{decompositiontau} follows from the above discussion and the fact that none of the IC complexes supported on $\cO_{2^i1^{2n+1-2i}}$, $i\geq 1$ can appear in the decomposition of $\sigma_*\bC[-]$.
The proof of Lemma \ref{lem curve case} is complete.

\end{document}